\newcommand{\RED}[1]{{\color{red}#1}} 
 \renewcommand{\RED}[1]{{#1}} 
\newcommand{\BLU}[1]{{\color{blue}#1}} 
 \renewcommand{\BLU}[1]{{#1}} 
\numberwithin{equation}{section}
\newtheorem{theorem}{Theorem}[section]
\newtheorem{proposition}{Proposition}[section]
\newtheorem{corollary}{Corollary}[section]
\newtheorem{lemma}{Lemma}[section]
\newtheorem{remark}{Remark}[section]
\newtheorem{example}{Example}[section]
\newcommand{\memo}[1]{{\bf \small \RED{[MEMO:}} \BLU{#1} \ {\bf \small \RED{:end]} }}  
   \renewcommand{\memo}[1]{}           
\newcommand{\OMIT}[1]{{\bf [OMIT:} #1 \ {\bf --- end OMIT] }}  
   \renewcommand{\OMIT}[1]{}            
\newcommand{\llceil}{\bigg\lceil} 
\newcommand{\rrceil}{\bigg\rceil} 
\newcommand{\llfloor}{\bigg\lfloor} 
\newcommand{\rrfloor}{\bigg\rfloor} 
\newcommand{\RR}{{\mathbb{R}}}
\newcommand{\ZZ}{{\mathbb{Z}}}
\newcommand{\veczero}{{\bf 0}}
\newcommand{\dom}{{\rm dom\,}}
\newcommand{\argmax}{\arg \max}
\newcommand{\argmin}{\arg \min}
\newcommand{\subgR}{\partial}
\newcommand{\Lnat}{{L$^{\natural}$}}
\newcommand{\Mnat}{{M$^{\natural}$}}
\newcommand{\LLnat}{{L$^{\natural}_{2}$}}
\newcommand{\MMnat}{{M$^{\natural}_{2}$}}
\newcommand{\finbox}{\hspace*{\fill}$\rule{0.2cm}{0.2cm}$}
\newcommand{\todaye}{\the\year/\the\month/\the\day}
\begin{document}

\title{
Discrete Fenchel Duality for 
a Pair of Integrally Convex and Separable Convex Functions
}

\author{
Kazuo Murota%
\thanks{
The Institute of Statistical Mathematics,
and
Tokyo Metropolitan University, 
murota@tmu.ac.jp}
\ and 
Akihisa Tamura%
\thanks{Department of Mathematics, Keio University, 
aki-tamura@math.keio.ac.jp}
}

\date{August 2021 / October 2021 / December 2021}

\maketitle

\begin{abstract}
Discrete Fenchel duality is one of the central issues in discrete convex analysis.
The Fenchel-type min-max theorem for a pair of integer-valued \Mnat-convex functions 
generalizes the min-max formulas for polymatroid intersection
and valuated matroid intersection.
In this paper we establish a Fenchel-type min-max formula
for a pair of integer-valued integrally convex and separable convex functions.
Integrally convex functions 
constitute a fundamental function class
in discrete convex analysis,
including both \Mnat-convex functions and \Lnat-convex functions,
whereas separable convex functions are characterized as
those functions which are both \Mnat-convex and \Lnat-convex.
The theorem is proved by revealing a kind of box 
integrality of subgradients 
of an integer-valued integrally convex function.
The proof is based on the Fourier--Motzkin elimination.
\end{abstract}

{\bf Keywords}:
Discrete convex analysis,  
Fenchel duality, 
Integrally convex function, 
Integral subgradient, 
Fourier-Motzkin elimination



\newpage


\section{Introduction}
\label{SCintro}

Discrete Fenchel duality is one of the central issues in discrete convex analysis
 \cite{Fuj05book,Mdca98,Mdcasiam,Mbonn09,Mdcaeco16}.
In this paper we establish a Fenchel-type min-max formula
for a pair of integer-valued integrally convex and separable convex functions.

Integrally convex functions,
due to Favati--Tardella \cite{FT90},
constitute a fundamental function class
in discrete convex analysis,
and almost all kinds of discrete convex functions 
are known to be integrally convex. 
Indeed, separable convex,
{\rm L}-convex, \Lnat-convex, {\rm M}-convex,  
\Mnat-convex,  \LLnat-convex, and 
\MMnat-convex functions are known to be integrally convex \cite{Mdcasiam}.
Multimodular functions \cite{Haj85} 
are also integrally convex, 
as pointed out in \cite{Mdcaprimer07}.
Moreover, BS-convex and UJ-convex functions \cite{Fuj14bisubmdc}
are integrally convex.
Discrete midpoint convex functions 
\cite{MMTT20dmc}
and directed discrete midpoint convex functions 
\cite{TT21ddmc} are also integrally convex.

The concept of integral convexity
is used in formulating discrete fixed point theorems
and found applications in economics and game theory
\cite{IMT05,Mdcaeco16,Yan09fixpt}.
A proximity theorem for integrally convex functions
is established in \cite{MMTT19proxIC} 
together with a proximity-scaling algorithm for minimization.
Fundamental operations for integrally convex functions
such as projection and convolution are investigated in \cite{MM19projcnvl,Msurvop21,Mopernet21}.
Integer-valued integrally convex functions
enjoy integral biconjugacy \cite{MT20subgrIC}.
Section \ref{SCintcnvfn} of this paper describes the definition and 
technical properties of integrally convex functions that we need in this paper.

The discrete Fenchel-type min-max theorem is formulated
in terms of conjugate functions.
For an integer-valued function 
$f: \ZZ\sp{n} \to \ZZ \cup \{ +\infty \}$
with $\dom f \ne \emptyset$,
we define
$f\sp{\bullet}: \ZZ\sp{n} \to \ZZ \cup \{ +\infty \}$ 
by
\begin{align}
f\sp{\bullet}(p)  &= \max\{  \langle p, x \rangle - f(x)   \mid x \in \ZZ\sp{n} \}
\qquad ( p \in \ZZ\sp{n}),
 \label{conjvexZpZ} 
\end{align}
called the {\em integral conjugate} of $f$.
Here,
$\langle p, x \rangle = \sum_{i=1}\sp{n} p_{i} x_{i}$
is the inner product of 
$p=(p_{1}, p_{2}, \ldots, p_{n})$ and 
$x=(x_{1}, x_{2}, \allowbreak \ldots, \allowbreak  x_{n})$,
and
for a function 
$h: \ZZ\sp{n} \to \RR \cup \{ -\infty, +\infty \}$ 
in general, 
$\dom h = \{ x \in \ZZ\sp{n} \mid -\infty < h(x) < +\infty \}$
is called the {\em effective domain} of $h$.
Note that $f\sp{\bullet}(p)$ may be $+\infty $ 
and hence using ``$\sup$'' (supremum) in  \eqref{conjvexZpZ} 
would be formally more precise but 
we choose to use ``$\max$'' (maximum).
The concave version of the conjugate function can also be defined 
for an integer-valued function 
$g: \ZZ\sp{n} \to \ZZ \cup \{ -\infty \}$.
Namely, we define
$g\sp{\circ}: \ZZ\sp{n} \to \ZZ \cup \{ -\infty \}$ 
by
\begin{align}
g\sp{\circ}(p)  &= \min\{  \langle p, x \rangle - g(x)   \mid x \in \ZZ\sp{n} \}
\qquad ( p \in \ZZ\sp{n}),
 \label{conjcavZpZ} 
\end{align}
and call this function $g\sp{\circ}$ the 
{\em integral concave conjugate} of $g$,
while $f\sp{\bullet}$ is called the 
{\em integral convex conjugate} of $f$
to distinguish between convex and concave versions.
We have $g\sp{\circ}(p) = -f\sp{\bullet}(-p)$ if $g(x) = -f(x)$.

The discrete Fenchel-type duality theorem 
asserts the min-max formula
\begin{align} 
 \min \{ f(x) - g(x) \mid  x \in \ZZ\sp{n}  \} 
= \max \{ g\sp{\circ}(p) - f\sp{\bullet}(p) \mid  p \in \ZZ\sp{n} \}
\label{minmaxGenZZ0} 
\end{align} 
for a pair of functions
$f: \ZZ\sp{n} \to \ZZ \cup \{ +\infty \}$
and 
$g: \ZZ\sp{n} \to \ZZ \cup \{ -\infty \}$.
It is supposed that $f$ and $g$ are equipped with certain 
discrete convexity and concavity 
and some additional regularity conditions are assumed.
Such a theorem
can be traced back to the Fenchel-type duality theorem 
for submodular set functions by Fujishige \cite{Fuj84fenc}
(see \cite[Theorem 6.3]{Fuj05book}).
The Fenchel-type min-max theorem for 
a pair of \Mnat-convex functions 
by Murota \cite{Mdca98} 
(see \cite[Theorem 8.21]{Mdcasiam})
generalizes the min-max formulas for polymatroid intersection,
valuated matroid intersection, and 
the Fenchel-type duality theorem for submodular set functions
 (see \cite[Section 8.23]{Mdcasiam}).
As is well known, the existence of such min-max formula
guarantees the existence of a certificate of optimality
for the problem of minimizing 
$f(x) - g(x)$ over $x \in \ZZ\sp{n}$.

The main result of this paper (Theorem~\ref{THminmaxICfnSpfnZZ} below) is
the Fenchel-type min-max formula \eqref{minmaxGenZZ0}
where $f$ is an integer-valued integrally convex function
and  $g$ is an integer-valued separable concave function.
A function
$\Psi: \ZZ^{n} \to \ZZ \cup \{ -\infty \}$
in $x=(x_{1}, x_{2}, \ldots,x_{n}) \in \ZZ^{n}$
is called  
{\em separable concave}
if it can be represented as
\begin{equation}  \label{sepcavdef}
\Psi(x) = \psi_{1}(x_{1}) + \psi_{2}(x_{2}) + \cdots + \psi_{n}(x_{n})
\end{equation}
with univariate discrete concave functions
$\psi_{i}: \ZZ \to \ZZ \cup \{ -\infty \}$,
which means, by definition, that 
$\dom \psi_{i}$ is an interval of integers and
\begin{equation}  \label{univarcavedef}
\psi_{i}(k-1) + \psi_{i}(k+1) \leq 2 \psi_{i}(k)
\qquad (k \in \ZZ).
\end{equation}
The integral concave conjugate of $\Psi$ 
is given by 
\begin{equation} \label{Psiconj00}
\Psi\sp{\circ}(p)= 
 \psi_{1}\sp{\circ}(p_{1}) + \psi_{2}\sp{\circ}(p_{2}) + \cdots + \psi_{n}\sp{\circ}(p_{n}) 
\end{equation}
where
\begin{equation} \label{psiconjdef00}
\psi_{i}\sp{\circ}(\ell)  = \min\{ k \ell  -  \psi_{i}(k)  \mid  k \in \ZZ \}
\qquad
(\ell \in \ZZ).
\end{equation}

\begin{theorem}[Main result] \label{THminmaxICfnSpfnZZ}
For an integer-valued integrally convex function
$f: \ZZ\sp{n} \to \ZZ \cup \{ +\infty \}$
with $\dom f \ne \emptyset$
and an integer-valued separable concave function
$\Psi: \ZZ\sp{n} \to \ZZ \cup \{ -\infty \}$
with $\dom \Psi \ne \emptyset$,
we have
\begin{align} 
 \min \{ f(x) - \Psi(x) \mid  x \in \ZZ\sp{n}  \} 
= \max \{ \Psi\sp{\circ}(p) - f\sp{\bullet}(p) \mid  p \in \ZZ\sp{n} \},
\label{minmaxICfnSpfnZZ00} 
\end{align} 
where the minimum or the maximum is assumed to be finite. 
\finbox
\end{theorem}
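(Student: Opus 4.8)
The plan is to prove the inequality ``minimum $\ge$ maximum'' in \eqref{minmaxICfnSpfnZZ00}; the reverse inequality is weak duality, immediate because for every $x,p\in\ZZ\sp{n}$ one has $f(x)\ge\langle p,x\rangle-f\sp{\bullet}(p)$ and $\Psi(x)\le\langle p,x\rangle-\Psi\sp{\circ}(p)$, hence $f(x)-\Psi(x)\ge\Psi\sp{\circ}(p)-f\sp{\bullet}(p)$. We may assume the minimum is finite (if instead only the maximum is assumed finite, weak duality together with a short argument gives that the minimum is finite too); since $f-\Psi$ takes integer values on $\dom f\cap\dom\Psi$, a finite infimum is attained, say at $x\sp{*}$, and we write $\mu=f(x\sp{*})-\Psi(x\sp{*})$. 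The task then reduces to producing a single $p\sp{*}\in\ZZ\sp{n}$ that is at once an integral subgradient of $f$ at $x\sp{*}$, i.e.\ $f(y)\ge f(x\sp{*})+\langle p\sp{*},y-x\sp{*}\rangle$ for all $y\in\ZZ\sp{n}$, and an integral supergradient of $\Psi$ at $x\sp{*}$, i.e.\ $\Psi(y)\le\Psi(x\sp{*})+\langle p\sp{*},y-x\sp{*}\rangle$ for all $y\in\ZZ\sp{n}$: for such a $p\sp{*}$ the extrema defining $f\sp{\bullet}$ and $\Psi\sp{\circ}$ are attained at $y=x\sp{*}$, so $f\sp{\bullet}(p\sp{*})=\langle p\sp{*},x\sp{*}\rangle-f(x\sp{*})$ and $\Psi\sp{\circ}(p\sp{*})=\langle p\sp{*},x\sp{*}\rangle-\Psi(x\sp{*})$, whence $\Psi\sp{\circ}(p\sp{*})-f\sp{\bullet}(p\sp{*})=\mu$, which with weak duality proves \eqref{minmaxICfnSpfnZZ00}.

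I would then pin down the two gradient sets. Since $\Psi$ is separable concave, the set of real supergradients of $\Psi$ at $x\sp{*}$ is the box $B=\prod_{i=1}\sp{n}[\alpha_{i},\beta_{i}]$ with $\alpha_{i}=\psi_{i}(x\sp{*}_{i}+1)-\psi_{i}(x\sp{*}_{i})$ and $\beta_{i}=\psi_{i}(x\sp{*}_{i})-\psi_{i}(x\sp{*}_{i}-1)$ (values in $\ZZ\cup\{\pm\infty\}$, $\alpha_{i}\le\beta_{i}$), and the integral supergradients of $\Psi$ at $x\sp{*}$ are exactly the integer points of $B$; thus any $p\in B\cap\ZZ\sp{n}$ already satisfies the $\Psi$-requirement. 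On the $f$-side, the local characterization of subgradients of integrally convex functions---a property of the kind recalled in Section~\ref{SCintcnvfn}---identifies the set of real subgradients of $f$ at $x\sp{*}$ with the polyhedron
\[
S=\{\,p\in\RR\sp{n}\mid \langle p,d\rangle\le f(x\sp{*}+d)-f(x\sp{*})\ \text{ for all }\ d\in\{-1,0,1\}\sp{n}\,\},
\]
defined by finitely many inequalities whose right-hand sides are integers and whose coefficient vectors lie in $\{-1,0,1\}\sp{n}$. So it remains to exhibit an integer point of $S\cap B$.

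This I would do with two ingredients. First, $S\cap B\neq\emptyset$: this is the Fenchel duality statement for the piecewise-linear closures. Since the sum of an integrally convex function and a separable convex function is integrally convex (Section~\ref{SCintcnvfn}), we have $\overline{f-\Psi}=\overline{f}-\overline{\Psi}$ (using that $\overline{\Psi}$ is affine on each unit cube), and $x\sp{*}$ minimizes this convex function over $\RR\sp{n}$; as $-\overline{\Psi}$ is polyhedral and $x\sp{*}\in\dom\overline{f}\cap\dom\overline{\Psi}$, the subdifferential sum rule applies and gives a vector $p\sp{\circ}\in\partial\overline{f}(x\sp{*})\cap\partial\overline{\Psi}(x\sp{*})=S\cap B$. (Alternatively, $S\cap B\neq\emptyset$ is the feasibility of a finite linear system, which can be certified directly by Fourier--Motzkin using that $x\sp{*}$ minimizes $f-\Psi$.) Second---and this is the main obstacle---the \emph{box integrality of the subgradient set}: for an integer-valued integrally convex $f$ and $x\sp{*}\in\dom f$, every real subgradient $p\sp{\circ}\in S$ can be rounded coordinatewise to an integral one, i.e.\ there is $p\sp{*}\in S\cap\ZZ\sp{n}$ with $\lfloor p\sp{\circ}_{i}\rfloor\le p\sp{*}_{i}\le\lceil p\sp{\circ}_{i}\rceil$ for every $i$. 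Applying this to the $p\sp{\circ}$ just produced completes the argument: since $\alpha_{i}\le p\sp{\circ}_{i}\le\beta_{i}$ with $\alpha_{i},\beta_{i}$ integral (or $\pm\infty$), the rounded vector $p\sp{*}$ still lies in $B$, hence is the desired common integral subgradient of $f$ and supergradient of $\Psi$ at $x\sp{*}$.

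The real work is the box-integrality lemma, and this is where Fourier--Motzkin elimination enters. A generic system with $\{-1,0,1\}$ coefficients and integer right-hand side is \emph{not} box-integer, so the proof must use integral convexity of $f$---equivalently, convexity of the Favati--Tardella local extension---which imposes inequalities among the values $f(x\sp{*}+d)$ that restrict which defining inequalities of $S$ can be simultaneously tight. Eliminating the coordinates $p_{1},\ldots,p_{n}$ one at a time, the structural fact to be verified is that projecting out a coordinate carries the subgradient polyhedron of an integer-valued integrally convex function to the subgradient polyhedron of another integer-valued integrally convex function in one fewer variable (this relies on the fact that restricting such a function to a coordinate hyperplane is again integer-valued and integrally convex), so the elimination stays within the same class and the right-hand data stay integral; an induction on $n$ then reduces matters to the one-variable case, where $S$ is an interval with integral endpoints, and a back-substitution recovers the integral point. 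The one genuinely fiddly part beyond this is the treatment of unbounded directions---coordinates with $\alpha_{i}=-\infty$ or $\beta_{i}=+\infty$, i.e.\ $x\sp{*}_{i}$ on the boundary of $\dom\psi_{i}$---and of $x\sp{*}$ on the boundary of $\dom f$, which requires phrasing the elimination so that ``$+\infty$'' right-hand sides are simply discarded.
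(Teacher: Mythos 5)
Your reduction of the theorem is exactly the paper's own route: weak duality, a minimizer $x\sp{*}$ of the integer-valued function $f-\Psi$, the identification of the supergradients of $\Psi$ at $x\sp{*}$ with an integral box $B$, nonemptiness of $\subgR f(x\sp{*})\cap B$ via $\overline{f-\Psi}=\overline{f}-\overline{\Psi}$ (Proposition~\ref{PRcnvextICsep}) and continuous Fenchel duality, and finally an integral point of $\subgR f(x\sp{*})\cap B$ obtained by coordinatewise rounding of a real subgradient --- which is precisely Theorem~\ref{THICsubgrBox} in the form of Corollary~\ref{COsubgrICp}. Up to that point the argument is sound (the case where only the maximum is assumed finite, which you dismiss as ``a short argument,'' does need the integral separation argument of Lemma~\ref{LMfntmaxfntmin}, but that is secondary).

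The genuine gap is in your sketch of the rounding/box-integrality lemma, which is where all the substance lies. Your induction eliminates coordinates from $\subgR f(x\sp{*})$ alone, using the (true) fact that its projection onto $(p_{\ell},\ldots,p_{n})$ is the subgradient polyhedron of the restriction of $f$ --- this is Theorem~\ref{THelimICProj}, i.e.\ the earlier result of \cite{MT20subgrIC}. But the object that must be projected is $\subgR f(x\sp{*})$ \emph{together with} the box (rounding) constraints on the coordinates you have already treated. After rounding $(p_{2},\ldots,p_{n})$ inside the projection and back-substituting, the feasible interval for $p_{1}$ indeed has integral endpoints, but nothing in your argument guarantees that it meets $[\lfloor p\sp{\circ}_{1}\rfloor,\lceil p\sp{\circ}_{1}\rceil]$: the bounds on $p_{1}$ involve up to $n-1$ of the other coordinates with coefficients $\pm 1$, so moving each of them within its unit interval can shift those bounds by as much as $n-1$. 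Equivalently, your claim that the elimination ``stays within the same class'' fails once the box is present: $[P\cap B]_{\ell}$ is in general strictly smaller than $[P]_{\ell}\cap[B]_{\ell}$, as Example~\ref{EXicbox2dim} and Remark~\ref{RMsystemPB} show, and the paper states explicitly that for this reason the boxless argument cannot simply be extended. Closing this gap is exactly the content of Sections~\ref{SCfmPBgen}--\ref{SCfmPBproof}: one carries along the modified systems $\mathrm{IQ}(\ell)$, in which the box bounds of the eliminated variables are substituted into \emph{every} inequality, and proves the Fourier--Motzkin-generated inequalities redundant via the convex-combination lemma (Lemma~\ref{LMcnvcombi}) resting on the midpoint characterization of integral convexity. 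Without an argument of this kind your induction does not go through.
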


For example, suppose that $f(x)$ represents a certain loss function 
in an integer vector $x$
and we want to minimize the loss $f(x)$ with an additional term for regularization
such as the $\ell_{1}$-norm $\| x \|_{1}$ and the squared $\ell_{2}$-norm 
$\| x \|_{2}\sp{2}$.
Then our problem is to minimize
$ f(x) + C \| x \|_{1}$ or
$ f(x) + C \| x \|_{2}\sp{2}$
with $C > 0$,
which is in the form of minimizing 
$f(x) - \Psi(x)$ over $x \in \ZZ\sp{n}$
with a separable concave function $\Psi(x)$.
If the loss function $f(x)$ can be chosen to be an integer-valued integrally convex function,
which does not seem to be so restrictive, 
we may apply Theorem~\ref{THminmaxICfnSpfnZZ}.
Note that Theorem~\ref{THminmaxICfnSpfnZZ}
implies the existence of a certificate of optimality
for the problem of minimizing 
$f(x) - \Psi(x)$ over $x \in \ZZ\sp{n}$; see Section \ref{SCfencthmImplic} for detail.

\medskip

We prove Theorem~\ref{THminmaxICfnSpfnZZ}
by revealing a kind of box integrality of subgradients 
of an integer-valued integrally convex function.
Let $f: \ZZ\sp{n} \to \ZZ \cup \{ +\infty \}$
be an integer-valued function.
The {\em subdifferential} of $f$ at $x \in \dom f$
is defined as 
\begin{equation} \label{subgZRdef00}
 \subgR f(x)
= \{ p \in  \RR\sp{n} \mid    
  f(y) - f(x)  \geq  \langle p, y - x \rangle   \ \ \mbox{\rm for all }  y \in \ZZ\sp{n} \}
\end{equation}
and an element $p$ of $\subgR f(x)$ is called a 
{\em subgradient}
of $f$ at $x$.
An integer vector $p$ belonging to $\subgR f(x)$ 
is called an {\em integral subgradient},
and the condition 
\begin{equation} \label{ICsubgrZ00}
\subgR f(x) \cap \ZZ\sp{n} \neq \emptyset
\end{equation}
is sometimes referred to as
the {\em integral subdifferentiability}  of $f$ at $x$.
It has been shown  
by Murota--Tamura \cite{MT20subgrIC} 
that \eqref{ICsubgrZ00} holds for 
any integer-valued integrally convex function $f$
and $x \in \dom f$.

Our proof of Theorem~\ref{THminmaxICfnSpfnZZ}
is based on a strengthening of 
integral subdifferentiability \eqref{ICsubgrZ00}
with an additional box condition.
For two integer vectors 
$\alpha \in (\ZZ \cup \{ -\infty \})\sp{n}$ and 
$\beta \in (\ZZ \cup \{ +\infty \})\sp{n}$ with $\alpha \leq \beta$,
we define notation
\[
 [\alpha,\beta]_{\RR} = \{ p \in \RR\sp{n} \mid \alpha \leq p \leq \beta \},
\]
which represents the set of real vectors between $\alpha$ and $\beta$. 
An {\em integral box} will mean
a set $B$ of real vectors represented as
\[
  B =  [\alpha,\beta]_{\RR} = \{ p \in \RR\sp{n} \mid \alpha \leq p \leq \beta \}
\]
for some $\alpha \in (\ZZ \cup \{ -\infty \})\sp{n}$ and 
$\beta \in (\ZZ \cup \{ +\infty \})\sp{n}$ with $\alpha \leq \beta$.

Our main technical result is the following,
with which our main result (Theorem~\ref{THminmaxICfnSpfnZZ}) is proved 
in Section \ref{SCintdualopt}.

\begin{theorem}[Main technical result] \label{THICsubgrBox}
Let $f: \ZZ\sp{n} \to \ZZ \cup \{ +\infty \}$
be an integer-valued integrally convex function, $x \in \dom f$,
and $B$ be an integral box.
If $\subgR f(x) \cap B$ is nonempty,
then $\subgR f(x) \cap B$ 
is a polyhedron containing an integer vector.
If, in addition, $\subgR f(x) \cap B$ is bounded,
then $\subgR f(x) \cap B$ has an integral vertex.
\finbox
\end{theorem}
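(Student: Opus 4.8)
The plan is to describe $\subgR f(x)$ by finitely many linear inequalities, intersect with the box, and then eliminate the variables $p_1,\dots,p_n$ one at a time by Fourier--Motzkin, using integral convexity to keep the system in a shape that exhibits integral points and integral vertices. By the local characterization of global minimizers of integrally convex functions (a point minimizes such a function globally if and only if it minimizes it over its unit $\ell_\infty$-neighbourhood), applied to the integrally convex function $y\mapsto f(y)-\langle p,y\rangle$, one obtains
\[
 \subgR f(x)=\bigl\{\,p\in\RR^{n} \ \big|\ \langle d,p\rangle\le f(x+d)-f(x)\ \text{ for all }d\in\{-1,0,1\}^{n}\text{ with }x+d\in\dom f\,\bigr\}.
\]
The inequalities $\alpha\le p\le\beta$ defining $B$ are of the same shape ($d=\pm e_{i}$, integral right-hand side), so $\subgR f(x)\cap B$ is cut out by finitely many inequalities $\langle d,p\rangle\le c_{d}$ with $d\in\{-1,0,1\}^{n}$ and $c_{d}\in\ZZ$; in particular it is a polyhedron, which settles the first assertion. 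The key tool is a midpoint estimate: for $d',d''\in\{-1,0,1\}^{n}$ with $x+d',x+d''\in\dom f$, writing $d'+d''=2v+\hat w$ with $v,\hat w\in\{-1,0,1\}^{n}$ of disjoint support (the even and the odd coordinates of $d'+d''$), the common midpoint $m=x+v+\tfrac{1}{2}\hat w$ of $x+d'$ and $x+d''$ is also the midpoint of the segment $[\,x+v,\,x+v+\hat w\,]$, which is a main diagonal of a subcube on integer points and hence an edge of every Freudenthal simplex; since the local convex extension $\bar f$ is affine along such a segment and convex overall, $\tfrac12\bigl(f(x+v)+f(x+v+\hat w)\bigr)=\bar f(m)\le\tfrac12\bigl(f(x+d')+f(x+d'')\bigr)$, i.e.\ $c_{v}+c_{v+\hat w}\le c_{d'}+c_{d''}$.

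\emph{Induction by Fourier--Motzkin.} I would prove the integrality statements by induction on $n$, eliminating $p_{n}$. A Fourier--Motzkin combination of a constraint with coefficient vector $d^{+}$ ($d^{+}_{n}=1$) and one with $d^{-}$ ($d^{-}_{n}=-1$) is $\langle w,p\rangle\le c_{d^{+}}+c_{d^{-}}$ with $w:=d^{+}+d^{-}$, $w_{n}=0$, possibly with entries $\pm 2$ off coordinate $n$. Writing $w=2v+\hat w$ as above and using $\langle w,p\rangle=\langle v,p\rangle+\langle v+\hat w,p\rangle$ together with $c_{v}+c_{v+\hat w}\le c_{d^{+}}+c_{d^{-}}$, this combination is implied by the native constraints indexed by $v$ and by $v+\hat w$, whose coefficient vectors lie in $\{-1,0,1\}^{n-1}\times\{0\}$ with integral right-hand sides; and if $d^{+}$ or $d^{-}$ equals $\pm e_{n}$ the combination already has a $\{-1,0,1\}^{n-1}\times\{0\}$ coefficient vector and integral right-hand side. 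Hence the projection of $\subgR f(x)\cap B$ onto $\RR^{n-1}$ is again described by inequalities with $\{-1,0,1\}$ coefficient vectors and integral right-hand sides satisfying the same midpoint estimate; in fact, dropping the box, the projection of $\subgR f(x)$ is exactly $\subgR\hat f(x')$ for the restriction $\hat f$ of $f$ to $\{y:y_{n}=x_{n}\}$, which is integrally convex, so the inductive hypothesis applies to the projection.

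\emph{Lifting, and the main obstacle.} For an integer point $q$ of the projection, the admissible $p_{n}$ form an interval whose endpoints lie in $\{\alpha_{n},\beta_{n}\}\cup\{\pm(c_{d}-\langle\bar d,q\rangle)\}$ and hence are integers, so some integer $q_{n}$ gives $(q,q_{n})\in\subgR f(x)\cap B$. If $\subgR f(x)\cap B$ is bounded, its projection is bounded and has an integral vertex $q^{*}$ by the inductive hypothesis; the fibre $\{q^{*}\}\times[\ell(q^{*}),u(q^{*})]$ is a bounded face of $\subgR f(x)\cap B$ with integral endpoints, either of which is an integral vertex. The step I expect to be the main obstacle is making the elimination airtight: one must verify that the class of systems in play — $\{-1,0,1\}$ coefficient vectors, integral right-hand sides, the midpoint estimate, intersected with an integral box — is genuinely closed under one Fourier--Motzkin elimination. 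Besides the midpoint computation this needs careful handling of the effective domain (so that the native constraints absorbing each combination are actually present, which is where one uses that $\dom f$ is an integrally convex set) and of the way the box on $p_{n}$ modifies the surviving right-hand sides, together with a uniform treatment of pairs $d^{+},d^{-}$ that reinforce in several coordinates.
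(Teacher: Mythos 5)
Your overall strategy (describe $\subgR f(x)$ by the local inequalities $\langle d,p\rangle\le f(x+d)-f(x)$, $d\in\{-1,0,1\}\sp{n}$, add the box, eliminate variables by Fourier--Motzkin, and read off integral points and vertices inductively) is the same as the paper's, but your key lemma is false. The ``midpoint estimate'' $c_{v}+c_{v+\hat w}\le c_{d'}+c_{d''}$ rests on the claim that the local convex extension of an integrally convex function is affine along the main diagonal $[\,x+v,\,x+v+\hat w\,]$ of a unit subcube (``an edge of every Freudenthal simplex''). Integrally convex functions are not locally extended via the Freudenthal triangulation: integral convexity only gives
\begin{equation*}
\tilde f\Bigl(x+v+\tfrac12\hat w\Bigr)\ \le\ \tfrac12\bigl(f(x+d')+f(x+d'')\bigr),
\end{equation*}
and $\tilde f$ at that midpoint is realized by a convex combination of \emph{possibly many} points of the integral neighborhood, not necessarily by the two antipodal points $x+v$ and $x+v+\hat w$. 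Concretely, take $f(x_1,x_2,x_3)=\max(0,\,x_2+x_3-1)$ (integrally convex, being 2-separable), $x=\veczero$, $d'=(1,1,0)$, $d''=(1,0,1)$; then $v=(1,0,0)$, $\hat w=(0,1,1)$ and $f(v)+f(v+\hat w)=0+1=1>0=f(d')+f(d'')$, so your estimate fails and the Fourier--Motzkin combination of the constraints for $d'$ and $d''$ is \emph{not} implied by the two native constraints for $v$ and $v+\hat w$. Worse, those native constraints may not even exist: for the integrally convex function of Remark~\ref{RMsubgNotIntPolyh} one has $x+v\notin\dom f$ for this very pair, which is the effective-domain problem you mention but do not resolve.

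The correct substitute is exactly what the paper does: by integral convexity the FM-combination is implied by a \emph{convex combination with fractional weights of many} native inequalities indexed by points of $N(x+(a_i+a_k)/2)\cap\dom f$ (Lemma~\ref{LMcnvcombi}), together with sign conditions on those rows; and because $[P\cap B]_{\ell}\neq[P]_{\ell}\cap[B]_{\ell}$ in general (Remark~\ref{RMsystemPB}, Example~\ref{EXicbox2dim}), one cannot simply say the projected system ``satisfies the same midpoint estimate'' --- one must carry the box through the induction by substituting $\alpha_j/\beta_j$ for the already-eliminated variables according to the sign of the coefficient, which is the system $\mathrm{IQ}(\ell)$ and Theorem~\ref{THelimICboxProj}, proved via Lemmas~\ref{LMcnvcombi} and \ref{LMfromIQ(ell+1)} plus the trivial inequality \eqref{alfbetaneg} absorbing the cancelled coordinates. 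This closure-under-elimination step is precisely what you flagged as ``the main obstacle'' and left unverified, and with the two-point midpoint lemma it cannot be made to work; with the convex-combination lemma it can, but that is then the paper's proof rather than a shortcut. Your lifting step and the fibre argument for an integral vertex in the bounded case are fine once the elimination theorem is in place.
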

\noindent
The content of the above theorem may be expressed succinctly as:
\begin{equation} \label{ICsubgrboxZ00}
\subgR f(x) \cap B \neq \emptyset
\ \Longrightarrow \ 
\subgR f(x) \cap B \cap \ZZ\sp{n} \neq \emptyset.
\end{equation}

This paper is organized as follows.
Section~\ref{SCintcnvfn} is a review of relevant results on
integrally convex functions.
Section~\ref{SCfencthm} 
presents the Fenchel-type min-max formula
for a pair of integer-valued integrally convex and separable convex functions
(Theorem~\ref{THminmaxICfnSpfnZZ}) 
with its implications and significances in discrete convex analysis
as well as the derivation of the theorem 
from Theorem~\ref{THICsubgrBox}.
Section~\ref{SCsubrZ} establishes 
the main technical result 
(Theorem~\ref{THICsubgrBox})
by means of the Fourier--Motzkin elimination.

\section{Integrally Convex Functions}
\label{SCintcnvfn}

In this section we summarize fundamental facts about integrally convex functions.

For $x \in \RR^{n}$ the integral neighborhood of $x$ is defined as 
\begin{equation}  \label{intneighbordef}
N(x) = \{ z \in \ZZ^{n} \mid | x_{i} - z_{i} | < 1 \ (i=1,2,\ldots,n)  \}.
\end{equation}
It is noted that 
strict inequality ``\,$<$\,'' is used in this definition
and hence $N(x)$ admits an alternative expression
\begin{equation}  \label{intneighbordeffloorceil}
N(x) = \{ z \in \ZZ\sp{n} \mid
\lfloor x_{i} \rfloor \leq  z_{i} \leq \lceil x_{i} \rceil  \ \ (i=1,2,\ldots, n) \} ,
\end{equation}
where, for $t \in \RR$ in general, 
$\left\lfloor  t  \right\rfloor$
denotes the largest integer not larger than $t$
(rounding-down to the nearest integer)
and 
$\left\lceil  t   \right\rceil$ 
is the smallest integer not smaller than $t$
(rounding-up to the nearest integer).
For a set $S \subseteq \ZZ^{n}$
and $x \in \RR^{n}$
we call the convex hull of $S \cap N(x)$ 
the {\em local convex hull} of $S$ at $x$.
A nonempty set
$S \subseteq \ZZ^{n}$ is said to be 
{\em integrally convex} if
the union of the local convex hulls $\overline{S \cap N(x)}$ over $x \in \RR^{n}$ 
is convex \cite{Mdcasiam}.
This is equivalent to saying that,
for any $x \in \RR^{n}$, 
$x \in \overline{S} $ implies $x \in  \overline{S \cap N(x)}$.
An integrally convex set $S$ is ``hole-free'' in the sense that 
$S =  \overline{S} \cap \mathbb{Z}^{n}$.

It is recognized only recently
\cite{Mopernet21}
 that the concept of integrally convex sets is 
closely related (or essentially equivalent) to 
the concept of box-integer polyhedra.
Recall from \cite[Section~5.15]{Sch03} 
that a polyhedron $P \subseteq \RR\sp{n}$ is called 
{\em box-integer}
if $P \cap \{ x \in \RR\sp{n} \mid a  \leq x \leq b \}$
is an integer polyhedron for each choice of integer vectors $a$ and $b$.
It is easy to see that 
if a set
$S \subseteq \ZZ^{n}$
is integrally convex, then its convex hull
$\overline{S}$ is a box-integer polyhedron, and conversely,
if $P$ is a box-integer polyhedron, then 
$S = P \cap \ZZ\sp{n}$ is an integrally convex set.

For a function $f: \ZZ^{n} \to \RR \cup \{ +\infty  \}$
with $\dom f \ne \emptyset$,
the {\em convex envelope} of $f$ means
the (point-wise) largest convex function 
$g: \RR^{n} \to \RR \cup \{ +\infty  \}$
that satisfies $g(x) \leq f(x)$ for all $x \in \ZZ^{n}$.
The convex envelope of $f$ is denoted by $\overline{f}$.
If $\overline{f}(x) = f(x)$ for all $x \in \ZZ^{n}$,
we call $f$ {\em convex extensible}
and refer to $\overline{f}$ also as the {\em convex extension} of $f$.

Let $f: \ZZ^{n} \to \RR \cup \{ +\infty  \}$
be a function with $\dom f \ne \emptyset$.
The {\em local convex extension} 
$\tilde{f}: \RR^{n} \to \RR \cup \{ +\infty \}$
of $f$ is defined 
as the union of all convex envelopes of $f$ on $N(x)$.  That is,
\begin{equation} \label{fnconvclosureloc2}
 \tilde f(x) = 
  \min\{ \sum_{y \in N(x)} \lambda_{y} f(y) \mid
      \sum_{y \in N(x)} \lambda_{y} y = x,  \ 
  (\lambda_{y})  \in \Lambda(x) \}
\quad (x \in \RR^{n}) ,
\end{equation} 
where $\Lambda(x)$ denotes the set of coefficients for convex combinations indexed by $N(x)$:
\begin{equation} \label{Lambdadef}
  \Lambda(x) = \{ (\lambda_{y} \mid y \in N(x) ) \mid 
      \sum_{y \in N(x)} \lambda_{y} = 1, 
      \lambda_{y} \geq 0 \ \ \mbox{for all } \   y \in N(x)  \} .
\end{equation} 
If $\tilde f$ is convex on $\RR^{n}$,
then $f$ is said to be {\em integrally convex}
\cite{FT90}.
In this case we have
$\tilde f (x) = \overline{f}(x)$ for all $x \in \RR^{n}$.
The effective domain of an integrally convex function is an integrally convex set.
A set $S \subseteq \ZZ\sp{n}$ is integrally convex if and only if its indicator function
$\delta_{S}: \ZZ\sp{n} \to \{ 0, +\infty \}$
defined by
\[
\delta_{S}(x)  =
   \left\{  \begin{array}{ll}
    0            &   (x \in S) ,      \\
   + \infty      &   (x \not\in S)  \\
                      \end{array}  \right.
\]
is integrally convex.

Integral convexity of a function can be characterized as follows.
The condition (c) below is 
a local condition under the assumption that the effective domain is an integrally convex set.

\begin{theorem}[\cite{FT90,MMTT19proxIC,MMTT20dmc}]
\label{THfavtarProp33}
Let $f: \mathbb{Z}^{n} \to \mathbb{R} \cup \{ +\infty  \}$
be a function 
with $\dom f \neq \emptyset$.
Then the following properties are equivalent,
where $\tilde{f}$ is the local convex extension of $f$ 
defined by \eqref{fnconvclosureloc2}.

{\rm (a)}
$f$ is integrally convex.

{\rm (b)}
For every $x, y \in \ZZ\sp{n}$ with $\| x - y \|_{\infty} \geq 2$  we have \ 
\begin{equation}  \label{intcnvconddist2}
\tilde{f}\, \bigg(\frac{x + y}{2} \bigg) 
\leq \frac{1}{2} (f(x) + f(y)).
\end{equation}

{\rm (c)}
The effective domain $\dom f$ is an integrally convex set, and 
\eqref{intcnvconddist2} holds
for every $x, y \in \ZZ\sp{n}$ with $\| x - y \|_{\infty} =2$.
\finbox
\end{theorem}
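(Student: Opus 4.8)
The plan is to prove the cycle of implications $(a)\Rightarrow(b)\Rightarrow(c)\Rightarrow(a)$. Throughout I would use two elementary facts about the local convex extension $\tilde f$ of \eqref{fnconvclosureloc2}: first, that $\tilde f(z)=f(z)$ for every $z\in\ZZ\sp{n}$, since $N(z)=\{z\}$ when $z$ is integral; and second, that $\tilde f$ is affine on each simplex of a triangulation of the unit cubes, hence continuous on $\dom\tilde f$ and, in particular, convex on every single unit cell (where it is the convex envelope of the $2\sp{n}$ cube-vertex values). The latter already gives the midpoint inequality \eqref{intcnvconddist2} for all $x,y\in\ZZ\sp{n}$ with $\|x-y\|_{\infty}\le 1$ for free, so in each direction only the pairs with $\|x-y\|_{\infty}\ge 2$ carry content.

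The implication $(a)\Rightarrow(b)$ is immediate: if $\tilde f$ is convex on $\RR\sp{n}$, then midpoint convexity gives $\tilde f(\tfrac{x+y}{2})\le\tfrac12(\tilde f(x)+\tilde f(y))=\tfrac12(f(x)+f(y))$ for every $x,y\in\ZZ\sp{n}$, and restricting to $\|x-y\|_{\infty}\ge 2$ is exactly $(b)$. For $(b)\Rightarrow(c)$, the inequality demanded in $(c)$ is literally the special case $\|x-y\|_{\infty}=2$ of $(b)$, so the only substantive task is to show that $\dom f$ is an integrally convex set. Since $\dom\tilde f=\bigcup_{x}\overline{\dom f\cap N(x)}$ is by construction the union of the local convex hulls of $\dom f$, integral convexity of $\dom f$ is equivalent to convexity of $\dom\tilde f$. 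I would establish the latter by verifying that $\dom\tilde f$ is closed and midpoint-convex: given $p,q\in\dom\tilde f$, I would express them through their local convex-combination representations as in \eqref{fnconvclosureloc2}, reducing the finiteness of $\tilde f$ at $\tfrac{p+q}{2}$ to the finiteness of the right-hand side of \eqref{intcnvconddist2} at suitable integral pairs supplied by $(b)$.

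The crux is $(c)\Rightarrow(a)$, which I would split into two stages. In the first stage I \emph{bootstrap} the distance-$2$ inequality of $(c)$ to all $x,y$ with $\|x-y\|_{\infty}\ge 2$, i.e.\ I deduce $(b)$ from $(c)$, by induction on $\|x-y\|_{\infty}$ (using $\|x-y\|_{1}$ as a secondary measure to break ties). The inductive step is an exchange argument: for a pair $(x,y)$ not already covered, I would produce $u,v\in\dom f$ with $u+v=x+y$, with $\|u-v\|_{\infty}<\|x-y\|_{\infty}$, and with $f(u)+f(v)\le f(x)+f(y)$, so that $u,v$ share the midpoint of $x,y$ and the inductive hypothesis applies; the integral convexity of $\dom f$ assumed in $(c)$ is what guarantees that such closer $u,v$ can be kept inside $\dom f$. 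In the second stage, now knowing \eqref{intcnvconddist2} for all integral pairs, I would upgrade to convexity of $\tilde f$: since $\tilde f$ is continuous on its effective domain (which is convex by the argument of the previous paragraph applied in reverse), it suffices to check midpoint convexity at arbitrary $p,q\in\dom\tilde f$, and this I reduce to the integral midpoint inequalities by substituting the local convex-combination representations of $p$ and $q$ and applying \eqref{intcnvconddist2} termwise.

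The main obstacle I anticipate is the exchange step of the bootstrap, namely the explicit construction of the closer pair $(u,v)$ with $u+v=x+y$ and $f(u)+f(v)\le f(x)+f(y)$ out of the distance-$2$ hypothesis alone. Arranging simultaneously a strict decrease of the $\ell_{\infty}$-distance, the value inequality, and membership of $u,v$ in $\dom f$ is delicate, and it is precisely here that the two clauses of $(c)$ must be used together rather than separately. The remaining reductions (domain convexity, the lift from integral to real midpoint convexity) are routine once the continuity and piecewise-affine structure of $\tilde f$ are in hand.
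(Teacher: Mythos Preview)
The paper does not prove this theorem itself; it is stated as a cited result, and Remark~\ref{RMintcnvconcept} immediately afterwards attributes the equivalence of (a) and (c) to \cite[Proposition~3.3]{FT90} and \cite[Appendix~A]{MMTT19proxIC}, and the equivalence of (a) and (b) to \cite[Theorem~A.1]{MMTT20dmc}. There is therefore no in-paper argument to compare your proposal against.

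As to your plan, the exchange step you flag as the main obstacle is a genuine gap, not just a routine detail. For $x,y$ at $\ell_\infty$-distance $\ge 3$ you want integral $u,v$ with $u+v=x+y$, strictly smaller $\ell_\infty$-distance, and $f(u)+f(v)\le f(x)+f(y)$; but hypothesis (c) only controls $\tilde f$ at midpoints of distance-$2$ pairs via a \emph{convex combination} of $f$-values over the integral neighbourhood, and the contributing points need not pair off antipodally about the midpoint. I do not see how to extract such a pair $(u,v)$ from (c), and the arguments in the cited references do not go this way: for (c)$\Rightarrow$(a) one instead shows that $\tilde f$ is convex on the union of any two unit cells sharing a facet (all integer vertices there lie within $\ell_\infty$-distance $2$, so (c) applies directly) and then passes from local to global convexity of the piecewise-affine $\tilde f$ on its convex domain. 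Your ``termwise substitution'' lift from integral to real midpoint convexity, used both in the second stage of (c)$\Rightarrow$(a) and in the domain clause of (b)$\Rightarrow$(c), has a similar problem: writing $p=\sum_i\lambda_i x^{(i)}$ and $q=\sum_j\mu_j y^{(j)}$ does not yield a usable decomposition of $(p+q)/2$, since the pairwise midpoints $(x^{(i)}+y^{(j)})/2$ generally lie outside $N((p+q)/2)$ and bounds on $\tilde f$ there do not aggregate to a bound on $\tilde f((p+q)/2)$.
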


\begin{remark} \rm \label{RMintcnvconcept}
The concept of integrally convex functions is introduced in \cite{FT90} 
for functions defined on integer intervals (discrete rectangles).
The extension to functions with general integrally convex effective domains
is straightforward, which is found in \cite{Mdcasiam}.
Theorem~\ref{THfavtarProp33} originates in \cite[Proposition 3.3]{FT90},
which shows the  equivalence of (a) and (c)
when the effective domain is an integer interval (box),
while the equivalence of (a) and (c) 
for a general integral convex effective domain 
is proved in \cite[Appendix A]{MMTT19proxIC}. 
The equivalence of (a) and (b)
in Theorem~\ref{THfavtarProp33} is shown in 
\cite[Theorem A.1]{MMTT20dmc}.
\finbox
\end{remark}

\begin{example} \rm  \label{EXsepIC}
A function
$\Phi: \ZZ^{n} \to \RR \cup \{ +\infty \}$
in $x=(x_{1}, x_{2}, \ldots,x_{n}) \in \ZZ^{n}$
is called  
{\em separable convex}
if it can be represented as
\begin{equation}  \label{sepvexdef}
\Phi(x) = \varphi_{1}(x_{1}) + \varphi_{2}(x_{2}) + \cdots + \varphi_{n}(x_{n})
\end{equation}
with univariate discrete convex functions
$\varphi_{i}: \ZZ \to \RR \cup \{ +\infty \}$, 
which means, by definition, that 
$\dom \varphi_{i}$ is an interval of integers and
\begin{equation}  \label{univarvexdef}
\varphi_{i}(k-1) + \varphi_{i}(k+1) \geq 2 \varphi_{i}(k)
\qquad (k \in \ZZ).
\end{equation}
A separable convex function is integrally convex.
\finbox
\end{example}

\begin{example} \rm  \label{EXdiagdomIC}
A symmetric matrix $Q =(q_{ij})$ that satisfies the condition
\begin{equation}\label{midptdiagdomdef}
q_{ii} \geq \sum_{j \neq i} |q_{ij}|
\qquad (i=1,\ldots,n)
\end{equation}
is called a diagonally dominant matrix (with nonnegative diagonals).
If  $Q$ is diagonally dominant
in the sense of \eqref{midptdiagdomdef},
then $f(x) = x\sp{\top} Q x$ is integrally convex
\cite[Proposition 4.5]{FT90}.
The converse is also true if $n \leq 2$
\cite[Remark 4.3]{FT90}.
\finbox
\end{example}

\begin{example} \rm \label{EXtwosepIC}
A function 
$f: \ZZ^{n} \to \RR \cup \{ +\infty \}$
is called
{\em 2-separable convex}
if it can be expressed as the sum of univariate convex,
diff-convex, and sum-convex functions, i.e., if
\begin{equation}\label{twosepSDconv}
f(x_1, \ldots, x_n) = 
\sum_{i=1}\sp{n} \varphi_{i}(x_{i}) 
+ \sum_{i \neq j}  \varphi_{ij}(x_{i} - x_{j}) + \sum_{i \neq j} \psi_{ij}(x_{i}+x_{j})  ,
\end{equation}
where
$\varphi_{i}, \varphi_{ij}, \psi_{ij}: \mathbb{Z} \to \mathbb{R} \cup \{ +\infty \}$
$(i, j =1,\ldots,n; \  i \not = j)$
are univariate convex functions.
A 2-separable convex function is known \cite{TT21ddmc} to be integrally convex.
A quadratic function $f(x) = x\sp{\top} Q x$ 
with $Q$ satisfying \eqref{midptdiagdomdef}
is an example of a 2-separable convex function.
\finbox
\end{example}

A minimizer of an integrally convex function
can be characterized by a local minimality condition as follows.

\begin{theorem}[\protect{\cite[Proposition 3.1]{FT90}};
  see also \protect{\cite[Theorem 3.21]{Mdcasiam}}]
  \label{THintcnvlocopt}
Let $f: \mathbb{Z}^{n} \to \mathbb{R} \cup \{ +\infty  \}$
be an integrally convex function and $x^{*} \in \dom f$.
Then $x^{*}$ is a minimizer of $f$ 
if and only if
$f(x^{*}) \leq f(x^{*} +  d)$ for all $d \in  \{ -1, 0, +1 \}^{n}$.
\finbox
\end{theorem}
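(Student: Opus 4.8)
The ``only if'' direction is immediate: if $x^{*}$ is a global minimizer, then in particular $f(x^{*}) \leq f(x^{*}+d)$ at each of the finitely many points $x^{*}+d$ with $d \in \{-1,0,+1\}^{n}$ (trivially so when $x^{*}+d \notin \dom f$). All the content lies in the ``if'' direction, and the plan is to lift the integer-local minimality of $x^{*}$ to a genuine local minimality of the local convex extension $\tilde f$ on $\RR^{n}$, and then to invoke the elementary fact that a local minimizer of a convex function is automatically a global one.

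First I would record the behaviour of the integral neighbourhood near the integer point $x^{*}$. Since $x^{*}\in\ZZ^{n}$, the definition \eqref{intneighbordef} gives $N(x^{*})=\{x^{*}\}$, so \eqref{fnconvclosureloc2} forces $\tilde f(x^{*})=f(x^{*})$; the same observation gives $\tilde f(y)=f(y)$ for every $y\in\ZZ^{n}$. Next, for any $w\in\RR^{n}$ with $\|w-x^{*}\|_{\infty}<1$ and each coordinate $i$, the value $w_{i}$ lies in the open interval $(x^{*}_{i}-1,\,x^{*}_{i}+1)$, so $\lfloor w_{i}\rfloor$ and $\lceil w_{i}\rceil$ both lie in $\{x^{*}_{i}-1,\,x^{*}_{i},\,x^{*}_{i}+1\}$; by \eqref{intneighbordeffloorceil} this means every $z\in N(w)$ is of the form $z=x^{*}+d$ with $d\in\{-1,0,+1\}^{n}$.

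With these two facts the key step is short. Fix $w$ in the open unit $\ell_{\infty}$-ball around $x^{*}$. Every $z\in N(w)$ satisfies $z=x^{*}+d$ for some $d\in\{-1,0,+1\}^{n}$, whence the hypothesis yields $f(z)\geq f(x^{*})$. Consequently, for every convex combination $(\lambda_{z})\in\Lambda(w)$ appearing in \eqref{fnconvclosureloc2} we have $\sum_{z\in N(w)}\lambda_{z}f(z)\geq f(x^{*})\sum_{z\in N(w)}\lambda_{z}=f(x^{*})$, and taking the minimum over all such combinations gives $\tilde f(w)\geq f(x^{*})=\tilde f(x^{*})$ (this also covers the case $N(w)\cap\dom f=\emptyset$, where $\tilde f(w)=+\infty$ trivially). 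Thus $x^{*}$ is a local minimizer of $\tilde f$ on $\RR^{n}$.

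Finally I would close the argument by convexity. Since $f$ is integrally convex, $\tilde f$ is convex on $\RR^{n}$ (this is precisely the definition recalled in Section~\ref{SCintcnvfn}), and a local minimizer of an extended-real-valued convex function is automatically a global minimizer. Hence $\tilde f(x^{*})\leq\tilde f(w)$ for all $w\in\RR^{n}$; specializing to $w=y\in\ZZ^{n}$ and using $\tilde f(x^{*})=f(x^{*})$ and $\tilde f(y)=f(y)$ yields $f(x^{*})\leq f(y)$ for every $y\in\ZZ^{n}$, as required. I expect the only genuinely delicate point to be the coordinatewise neighbourhood bookkeeping guaranteeing $N(w)\subseteq x^{*}+\{-1,0,+1\}^{n}$ for $w$ near $x^{*}$; once that inclusion is in place, the remainder is the standard convex-analytic passage from local to global minimality.
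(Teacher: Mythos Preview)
Your proof is correct. Note that the paper does not supply its own proof of this theorem; it merely cites \cite[Proposition 3.1]{FT90} and \cite[Theorem 3.21]{Mdcasiam} and marks the statement with \finbox. Your argument---showing that the local condition makes $x^{*}$ a local minimizer of the convex function $\tilde f$ on $\RR^{n}$, then invoking the standard fact that local minimizers of convex functions are global---is exactly the classical proof found in those references, so there is nothing further to compare.
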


We need the following fact for the proof of Theorem~\ref{THminmaxICfnSpfnZZ}.
Recall that the convex envelope of a function $f$ is denoted by $\overline{f}$,
which coincides with $\tilde f$ in \eqref{fnconvclosureloc2}
if $f$ is integrally convex.
It is noted that the statement cannot be extended to a pair of 
general integrally convex functions.

\begin{proposition} \label{PRcnvextICsep}
{\rm (1)}
Let $f: \mathbb{Z}^{n} \to \mathbb{R} \cup \{ +\infty  \}$
be an integrally convex function,
and $\Phi: \mathbb{Z}^{n} \to \mathbb{R} \cup \{ +\infty  \}$
a separable convex function.
Then
$\overline{f + \Phi} = \overline{f} + \overline{\Phi}$.

{\rm (2)}
Let $S \subseteq \ZZ\sp{n}$ 
be an integrally convex set and 
$D = \{ x \in \ZZ\sp{n} \mid \alpha \leq x \leq  \beta \}$
for some
$\alpha \in (\ZZ \cup \{ -\infty \})\sp{n}$ and
$\beta \in (\ZZ \cup \{ +\infty \})\sp{n}$ with $\alpha \leq \beta$.
Then
$\overline{S \cap D} = \overline{S} \cap \overline{D}$.
\end{proposition}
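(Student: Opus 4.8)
The plan is to dispose of the ``easy'' halves of both parts first and then concentrate on the reverse directions, treating~(2) before~(1) since the argument for~(2) is reused in~(1). For the easy halves: $\overline f+\overline\Phi$ is convex and bounded above by $f+\Phi$, hence it lies below $\overline{f+\Phi}$; and $S\cap D\subseteq S$ together with $S\cap D\subseteq D$ gives $\overline{S\cap D}\subseteq\overline S\cap\overline D$ at once.

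For the reverse inclusion in~(2) I would take $x\in\overline S\cap\overline D$ and show $x\in\overline{S\cap D}$. Since $S$ is integrally convex, the characterization of integrally convex sets recalled in Section~\ref{SCintcnvfn} gives $x\in\overline{S\cap N(x)}$, say $x=\sum_{y\in S\cap N(x)}\lambda_y y$ with $(\lambda_y)\in\Lambda(x)$. The one thing to check is $N(x)\subseteq D$: writing $D=\{z\in\ZZ\sp{n}\mid\alpha\le z\le\beta\}$ with integral $\alpha,\beta$ and using $\alpha\le x\le\beta$ (valid because $x\in\overline D$), integrality of the endpoints yields $\alpha_i\le\lfloor x_i\rfloor\le y_i\le\lceil x_i\rceil\le\beta_i$ for every $i$. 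Hence the convex combination above is supported on $S\cap D$, so $x\in\overline{S\cap D}$.

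For~(1) I would fix $x\in\RR\sp{n}$ and prove $\overline{f+\Phi}(x)\le\overline f(x)+\overline\Phi(x)$, which may be assumed to have finite right-hand side, i.e.\ $x\in\overline{\dom f}\cap\overline{\dom\Phi}$. Write $\Phi=\varphi_1+\cdots+\varphi_n$ as in \eqref{sepvexdef}; then $\dom\Phi=\prod_i\dom\varphi_i$ is a box, and the argument of~(2) gives $N(x)\subseteq\dom\Phi$. As $f$ is integrally convex, $\overline f(x)=\tilde f(x)$, and this value is finite and attained by some $(\lambda_y)\in\Lambda(x)$ supported on $N(x)\cap\dom f$; by the previous sentence $\lambda$ is then supported on $\dom(f+\Phi)$. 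Using the general fact $\overline g\le\tilde g$ (because $\overline g$ is convex with $\overline g\le g$) and the definition of the local convex extension,
\begin{align*}
\overline{f+\Phi}(x)\ &\le\ \widetilde{f+\Phi}(x)\ \le\ \sum_{y\in N(x)}\lambda_y\bigl(f(y)+\Phi(y)\bigr)\\
&=\ \tilde f(x)+\sum_{y\in N(x)}\lambda_y\Phi(y).
\end{align*}
It then remains to note that $\sum_{y\in N(x)}\lambda_y\Phi(y)=\overline\Phi(x)$ for \emph{every} $(\lambda_y)\in\Lambda(x)$: by separability it equals $\sum_i\sum_k\mu\sp{(i)}_k\varphi_i(k)$, where $\mu\sp{(i)}$ is the $i$-th coordinate marginal of $\lambda$; since $\mathrm{supp}\,\mu\sp{(i)}\subseteq\{\lfloor x_i\rfloor,\lceil x_i\rceil\}$ and $\sum_k k\,\mu\sp{(i)}_k=x_i$, each inner sum is the value at $x_i$ of the piecewise-linear interpolation of $\varphi_i$, and summing over $i$ gives $\sum_i\overline{\varphi_i}(x_i)=\overline\Phi(x)$ (the convex envelope of a separable convex function being the sum of the envelopes of its summands). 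Combining, $\overline{f+\Phi}(x)\le\tilde f(x)+\overline\Phi(x)=\overline f(x)+\overline\Phi(x)$.

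I expect the heart of the matter to be that last identity: on the cube $N(x)$ a separable convex function is pinned down coordinatewise by the marginals of $\lambda$, so \emph{every} convex combination of $N(x)$ representing $x$ assigns $\Phi$ the same weighted value $\overline\Phi(x)$. This is exactly where separability enters, and it has no analogue for a general integrally convex $\Phi$, in agreement with the remark preceding the proposition. The accompanying bookkeeping — integral convexity of $f$ permitting local convex combinations on $N(x)$, and $\dom\Phi$ being a box forcing $N(x)\subseteq\dom\Phi$ so that the two effective domains are compatible — is the routine-but-necessary part to get right.
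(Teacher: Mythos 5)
Your proposal is correct. For part (1) it is essentially the paper's proof: you take the coefficients $\lambda \in \Lambda(x)$ attaining $\overline{f}(x) = \tilde{f}(x)$ and use the fact that, by separability, $\sum_{y \in N(x)} \lambda_{y} \Phi(y) = \overline{\Phi}(x)$ for \emph{every} $\lambda \in \Lambda(x)$ representing $x$; the paper asserts this in one line (``with the same coefficient $\lambda$''), and your marginal computation simply makes the justification explicit, which is a clarification rather than a deviation. The genuine difference is part (2): the paper deduces it from (1) by taking $f = \delta_{S}$ and $\Phi = \delta_{D}$, whereas you prove it directly, combining the characterization $x \in \overline{S} \Rightarrow x \in \overline{S \cap N(x)}$ of integrally convex sets with the observation that integrality of $\alpha$ and $\beta$ gives $\alpha_{i} \leq \lfloor x_{i} \rfloor \leq \lceil x_{i} \rceil \leq \beta_{i}$, hence $N(x) \subseteq D$, for every $x \in \overline{D}$. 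Your direct route is self-contained and exhibits the geometric reason (it is the same box observation that guarantees $N(x) \subseteq \dom \Phi$ inside your proof of (1)), while the paper's reduction via indicator functions is shorter and reuses (1); both are valid, and the reversed order of proof ((2) before (1) in your write-up, (1) before (2) in the paper) is immaterial.
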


\begin{proof}
(1)
Fix $x \in \RR\sp{n}$.
Since $f$ is integrally convex, we have 
$\overline{f}(x) = \sum_{y \in N(x)} \lambda_{y} f(y)$
for some $\lambda \in \Lambda(x)$ (cf., \eqref{Lambdadef} for notation).
By separable convexity, we have
$\overline{\Phi}(x) = \sum_{y \in N(x)} \lambda_{y} \Phi(y)$
with the same coefficient $\lambda$.
This implies
\[
\overline{f}(x) +\overline{\Phi}(x) 
= \sum_{y \in N(x)} \lambda_{y} (f(y)+\Phi(y))
\geq (\overline{f + \Phi})(x),
\]
while the reverse inequality
$\overline{f}(x) +\overline{\Phi}(x) \leq (\overline{f + \Phi})(x)$
is obvious from the definition of convex envelopes.

(2)
This follows from (1) with $f = \delta_{S}$
and
$\Phi = \delta_{D}$.
Note that 
$f + \Phi = \delta_{S \cap D}$,
$\overline{f} = \delta_{\overline{S}}$,
$\overline{\Phi} = \delta_{\overline{D}}$, etc.
\end{proof}

The integral conjugate $f\sp{\bullet}$ of a function
$f: \ZZ\sp{n} \to \ZZ \cup \{ +\infty \}$
is also an integer-valued function defined on $\ZZ\sp{n}$.
So we can apply the transformation (\ref{conjvexZpZ}) 
to $f\sp{\bullet}$  
to obtain
$f\sp{\bullet\bullet} = (f\sp{\bullet})\sp{\bullet}$,
which is called the {\em integral biconjugate} of $f$.
Although the integral conjugate $f\sp{\bullet}$ 
of an integer-valued integrally convex function $f$
is not necessarily integrally convex
(\cite[Example 4.15]{MS01rel}, \cite[Remark 2.3]{MT20subgrIC}),
it is known \cite{MT20subgrIC} that
the integral biconjugate $f\sp{\bullet\bullet}$
coincides with $f$ itself.

\begin{theorem}[\cite{MT20subgrIC}]  \label{THbiconjIC}
For every integer-valued integrally convex function 
$f: \ZZ^{n} \to \ZZ \cup \{ +\infty \}$
with $\dom f \ne \emptyset$,
we have
$f\sp{\bullet\bullet}(x) =f(x)$ for all $x \in \ZZ\sp{n}$.
\finbox
\end{theorem}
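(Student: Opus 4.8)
The plan is to prove the two inequalities $f\sp{\bullet\bullet}\le f$ and $f\sp{\bullet\bullet}\ge f$ pointwise and separately, the first by the Fenchel--Young inequality and the second by invoking the integral subdifferentiability \eqref{ICsubgrZ00}. For the easy direction, note that for every $x,p\in\ZZ\sp{n}$ the definition \eqref{conjvexZpZ} gives $f\sp{\bullet}(p)\ge\langle p,x\rangle-f(x)$, hence $\langle p,x\rangle-f\sp{\bullet}(p)\le f(x)$; taking the maximum over $p\in\ZZ\sp{n}$ yields $f\sp{\bullet\bullet}(x)\le f(x)$ for all $x\in\ZZ\sp{n}$. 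The conjugate $f\sp{\bullet}$ is proper, i.e.\ $\dom f\sp{\bullet}\ne\emptyset$, because any integral subgradient at a point of $\dom f$ lies in $\dom f\sp{\bullet}$ (as seen next), so $f\sp{\bullet\bullet}$ is well defined.

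For the reverse inequality at a point $x\in\dom f$, I would use the fact recorded in \eqref{ICsubgrZ00} that an integer-valued integrally convex $f$ is integrally subdifferentiable, so there exists $p\in\subgR f(x)\cap\ZZ\sp{n}$. Writing out $p\in\subgR f(x)$ from \eqref{subgZRdef00} as $f(y)-f(x)\ge\langle p,y-x\rangle$ for all $y\in\ZZ\sp{n}$ and rearranging gives $\langle p,y\rangle-f(y)\le\langle p,x\rangle-f(x)$ for all $y$, with equality at $y=x$. Hence the maximum in \eqref{conjvexZpZ} is attained at $x$, so $f\sp{\bullet}(p)=\langle p,x\rangle-f(x)$, a finite value. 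Feeding this integral $p$ into the definition of $f\sp{\bullet\bullet}$ gives $f\sp{\bullet\bullet}(x)\ge\langle p,x\rangle-f\sp{\bullet}(p)=f(x)$, and together with the easy direction this proves $f\sp{\bullet\bullet}(x)=f(x)$ on $\dom f$.

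It remains to treat $x\notin\dom f$, where I must show $f\sp{\bullet\bullet}(x)=+\infty$. Since $\dom f$ is an integrally convex set, it is hole-free, so $\overline{\dom f}$ is a box-integer --- hence rational --- polyhedron with $\overline{\dom f}\cap\ZZ\sp{n}=\dom f$; as $x\in\ZZ\sp{n}\setminus\dom f$ we have $x\notin\overline{\dom f}$. A rational inequality description of $\overline{\dom f}$ then supplies $q\in\ZZ\sp{n}$ and $c\in\ZZ$ with $\langle q,y\rangle\le c$ for all $y\in\dom f$ and $\langle q,x\rangle\ge c+1$. Fixing any $x_{0}\in\dom f$ together with an integral subgradient $p_{0}\in\subgR f(x_{0})\cap\ZZ\sp{n}$ (so that $f\sp{\bullet}(p_{0})<+\infty$ by the previous paragraph), I set $p_{t}=p_{0}+tq\in\ZZ\sp{n}$ for $t\in\ZZ_{\ge 0}$. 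The bound $\langle q,y\rangle\le c$ on $\dom f$ gives, term by term, $f\sp{\bullet}(p_{t})\le f\sp{\bullet}(p_{0})+tc$, whence $\langle p_{t},x\rangle-f\sp{\bullet}(p_{t})\ge(\langle p_{0},x\rangle-f\sp{\bullet}(p_{0}))+t(\langle q,x\rangle-c)\to+\infty$ as $t\to\infty$, because $\langle q,x\rangle-c\ge 1$. Thus $f\sp{\bullet\bullet}(x)=+\infty=f(x)$, completing the argument.

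The routine parts are the two inequalities on $\dom f$, which reduce cleanly to integral subdifferentiability. The step I expect to require the most care is the out-of-domain case, specifically arranging an integral separating hyperplane that is valid over all of $\dom f$: the crucial inputs there are that $\overline{\dom f}$ is a box-integer (hence rational) polyhedron, which makes the separating vector $q$ integral and the inequality $\langle q,\cdot\rangle\le c$ hold throughout $\dom f$ rather than merely on a bounded part of it, and the availability of a single point $p_{0}$ with $f\sp{\bullet}(p_{0})$ finite, again furnished by \eqref{ICsubgrZ00}.
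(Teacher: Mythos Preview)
The paper does not prove this theorem; it is quoted from \cite{MT20subgrIC} and closed with \finbox. Your argument is correct and is the natural one: the inequality $f\sp{\bullet\bullet}\le f$ is Fenchel--Young, the reverse inequality on $\dom f$ follows immediately from the integral subdifferentiability \eqref{ICsubgrZ00} (Theorem~\ref{THsubgrIC}), and the out-of-domain case is handled by an integral separating hyperplane combined with a single point $p_{0}\in\dom f\sp{\bullet}$. This is essentially the route taken in \cite{MT20subgrIC}, and the very same integral-separation device appears in this paper in the proof of Lemma~\ref{LMfntmaxfntmin}.

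The one step that deserves a remark is your claim that $\overline{\dom f}$ is a rational polyhedron, so that an integral normal $q$ exists. You invoke the box-integrality of $\overline{\dom f}$ stated in Section~\ref{SCintcnvfn}; this is indeed enough, since a box-integer polyhedron is in particular an integer (hence rational) polyhedron, and an integer point lying outside a rational polyhedron can be separated by an integral inequality. With that granted, the chain $f\sp{\bullet}(p_{0}+tq)\le f\sp{\bullet}(p_{0})+tc$ and $\langle q,x\rangle\ge c+1$ gives the divergence you want, and the proof is complete.
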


The reader is referred to 
\cite{MM19projcnvl,MMTT19proxIC,MT20subgrIC}
for recent developments
in the theory of integral convexity,
and to \cite[Section 3.4]{Mdcasiam} for basic facts about integral convexity.

\section{Discrete Fenchel Duality Theorem}
\label{SCfencthm}

\subsection{Main theorem and its implications}
\label{SCfencthmImplic}

In this section we address the main result of this paper,
which has already been presented in Introduction 
as Theorem~\ref{THminmaxICfnSpfnZZ},
where the proof will be given in Section \ref{SCproofminmax}.
Recall the notations
\begin{align}
f\sp{\bullet}(p) 
& = \max \{ \langle p, x \rangle  - f(x) \mid  x \in \ZZ\sp{n} \}
\qquad ( p\in \ZZ\sp{n}),
 \label{conjvexZpZ2} 
\\
\Psi\sp{\circ}(p) & = 
\min \{ \langle p, x \rangle - \Psi(x) \mid x \in \ZZ\sp{n} \}
\qquad ( p\in \ZZ\sp{n}),
 \label{conjcavZpZ2} 
\end{align}
as well as the expressions 
of $\Psi$ in \eqref{sepcavdef} 
and $\Psi\sp{\circ}$ in \eqref{Psiconj00}.

\medskip
\noindent
{\bf Theorem~\ref{THminmaxICfnSpfnZZ}.} (Main result, again) \ 
\textit{
For an integer-valued integrally convex function
$f: \ZZ\sp{n} \to \ZZ \cup \{ +\infty \}$
with $\dom f \ne \emptyset$
and an integer-valued separable concave function
$\Psi: \ZZ\sp{n} \to \ZZ \cup \{ -\infty \}$
with $\dom \Psi \ne \emptyset$,
we have
\begin{align} 
 \min \{ f(x) - \Psi(x) \mid  x \in \ZZ\sp{n}  \} 
= \max \{ \Psi\sp{\circ}(p) - f\sp{\bullet}(p) \mid  p \in \ZZ\sp{n} \} ,
\label{minmaxICfnSpfnZZ} 
\end{align} 
where the minimum or the maximum is assumed to be finite. 
}
\finbox

\begin{remark} \rm \label{RMminmaxAssmp}
The assumption on the left-hand side of \eqref{minmaxICfnSpfnZZ} being finite
means that 
$\dom f \cap \dom \Psi \neq \emptyset$ and 
the set $\{ f(x) - \Psi(x) \mid  x \in \ZZ\sp{n}  \}$
of function values is bounded from below.
Since the function $f(x) - \Psi(x)$ is integer-valued,
this assumption immediately implies that 
there exists $x$ that attains the minimum.
It will be shown in Lemma~\ref{LMfntmaxfntmin} in Section~\ref{SCfntmaxfntmin}
that, if the maximum on the right-hand side of \eqref{minmaxICfnSpfnZZ} 
is finite, then 
the minimum on the left-hand side is also finite.
\finbox
\end{remark}

\begin{remark} \rm \label{RMlinfn}
When $\Psi$ is a linear function, say, 
$\Psi(x) = \langle c, x \rangle$ with $c \in \ZZ\sp{n}$,
the formula \eqref{minmaxICfnSpfnZZ} reduces to a triviality.
Indeed, in this case we have 
$\Psi\sp{\circ}(p) = 0$ for $p = c$ and
$\Psi\sp{\circ}(p) =-\infty$ for $p \neq c$,
and hence
\begin{align*}
& \mbox{LHS of \eqref{minmaxICfnSpfnZZ}}
= \min \{ f(x) - \langle c, x \rangle  \mid  x \in \ZZ\sp{n} \}
= - f\sp{\bullet}(c),
\\
& \mbox{RHS of \eqref{minmaxICfnSpfnZZ}} =
\max \{ 0 - f\sp{\bullet}(p) \mid  p=c \} = - f\sp{\bullet}(c).
\end{align*}
The formula \eqref{minmaxICfnSpfnZZ} is also a triviality
when $\dom f \subseteq \{ 0,1 \}\sp{n}$.
In this case, we may assume 
$\Psi(x) = \langle c, x \rangle$ with $\dom \Psi = \ZZ\sp{n}$,
and the above argument applies.
In this connection it is recalled that every function 
$f: \ZZ^{n} \to \ZZ \cup \{ +\infty \}$
with $\dom f \subseteq \{ 0,1 \}\sp{n}$ is integrally convex.
\finbox
\end{remark}

Theorem~\ref{THminmaxICfnSpfnZZ} implies
a min-max theorem for separable convex minimization
on a box-integer polyhedron 
(see Section \ref{SCintcnvfn} for the definition of a box-integer polyhedron).

\begin{theorem}  \label{THboxint}
For a nonempty box-integer polyhedron
$P \ (\subseteq \RR\sp{n})$
and an integer-valued separable convex function
$\Phi: \ZZ\sp{n} \to \ZZ \cup \{ +\infty \}$
with $\dom \Phi \ne \emptyset$,
we have
\begin{align} 
 \min \{ \Phi (x) \mid  x \in P \cap \ZZ\sp{n} \} 
= \max \{ \mu(p) - \Phi\sp{\bullet}(p) \mid  p\in \ZZ\sp{n}\} ,
\label{minmaxZZintcnvA} 
\end{align} 
where
$\mu(p)= \min \{ \langle p, x \rangle \mid  x\in P \}$
and 
the minimum or the maximum in \eqref{minmaxZZintcnvA} is assumed to be finite. 
\end{theorem}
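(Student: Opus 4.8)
The plan is to derive Theorem~\ref{THboxint} from the main result, Theorem~\ref{THminmaxICfnSpfnZZ}, by choosing the two ingredient functions appropriately. Given the box-integer polyhedron $P$, set $S = P \cap \ZZ\sp{n}$; by the correspondence between box-integer polyhedra and integrally convex sets reviewed in Section~\ref{SCintcnvfn}, $S$ is an integrally convex set, so its indicator function $\delta_{S}$ is an integer-valued integrally convex function with $\dom \delta_{S} = S \ne \emptyset$. Now let $f = \Phi + \delta_{S}$. Since $\Phi$ is separable convex (hence integrally convex) and the sum of an integrally convex function with the indicator of an integrally convex set is again integrally convex (restriction to an integrally convex set preserves integral convexity — this is immediate from Theorem~\ref{THfavtarProp33}(c), or one may invoke Proposition~\ref{PRcnvextICsep}), $f$ is integer-valued integrally convex with $\dom f = S \cap \dom \Phi$. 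Finally let $\Psi \equiv 0$, which is trivially separable concave and integer-valued, with $\Psi\sp{\circ}(p) = \min\{\langle p, x\rangle \mid x \in \ZZ\sp{n}\}$, equal to $0$ if $p = 0$ and $-\infty$ otherwise.

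First I would compute the left-hand side of \eqref{minmaxICfnSpfnZZ} for this choice: $\min\{ f(x) - \Psi(x) \mid x \in \ZZ\sp{n}\} = \min\{ \Phi(x) \mid x \in S\} = \min\{\Phi(x) \mid x \in P \cap \ZZ\sp{n}\}$, which is the left-hand side of \eqref{minmaxZZintcnvA}. Then I would compute the right-hand side. We have $f\sp{\bullet}(p) = \max\{\langle p,x\rangle - \Phi(x) - \delta_{S}(x) \mid x \in \ZZ\sp{n}\} = \max\{\langle p,x\rangle - \Phi(x) \mid x \in S\}$; this is a conjugate-type quantity but it is not literally $\Phi\sp{\bullet}(p)$ because the maximization is restricted to $S$. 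So the key computational step is to show that $\max\{ \Psi\sp{\circ}(p) - f\sp{\bullet}(p) \mid p \in \ZZ\sp{n}\}$ equals $\max\{ \mu(q) - \Phi\sp{\bullet}(q) \mid q \in \ZZ\sp{n}\}$. The natural route is to expand $f\sp{\bullet}(p)$ by splitting the vector $x \in S$ off against a free vector: writing $\delta_{S}$ itself in conjugate form, $\delta_{S}(x) = \delta_{S}\sp{\bullet\bullet}(x) = \max\{\langle r, x\rangle - \delta_{S}\sp{\bullet}(r) \mid r \in \ZZ\sp{n}\}$ (valid by Theorem~\ref{THbiconjIC} applied to the integer-valued integrally convex $\delta_{S}$), and noting $\delta_{S}\sp{\bullet}(r) = \max\{\langle r, x\rangle \mid x \in S\} = \max\{\langle r, x\rangle \mid x \in P\} = -\mu(-r)$, since linear optimization over $P$ and over $P \cap \ZZ\sp{n}$ agree for box-integer polyhedra... though here one must be slightly careful about unbounded directions. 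Then a standard "infimal convolution under conjugation" manipulation gives $f\sp{\bullet}(p) = \min\{ \Phi\sp{\bullet}(p - r) + \delta_{S}\sp{\bullet}(r) \mid r \in \ZZ\sp{n}\}$, and substituting $p = 0$ (the only $p$ with $\Psi\sp{\circ}(p) > -\infty$) yields $-f\sp{\bullet}(0) = \max\{ -\Phi\sp{\bullet}(-r) - \delta_{S}\sp{\bullet}(r) \mid r \in \ZZ\sp{n}\} = \max\{ \mu(r) - \Phi\sp{\bullet}(-r) \mid r \in \ZZ\sp{n}\}$, which after the substitution $q = -r$ is exactly the right-hand side of \eqref{minmaxZZintcnvA}.

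The main obstacle I anticipate is the bookkeeping around $\pm\infty$ and unboundedness: one must verify that $\delta_{S}\sp{\bullet}(r)$ genuinely equals $-\mu(-r)$ including the case where the linear functional is unbounded above on $P$ (both sides $+\infty$) and where $P \cap \ZZ\sp{n}$ could in principle differ in its supremum from $P$ — here box-integrality of $P$ is exactly what guarantees the integer points are cofinal. One also must transfer the finiteness hypothesis: the assumption that one side of \eqref{minmaxZZintcnvA} is finite needs to be translated into "the min or the max of \eqref{minmaxICfnSpfnZZ} is finite" so that Theorem~\ref{THminmaxICfnSpfnZZ} applies; for the min side this is immediate since $\min\{\Phi(x) \mid x \in P \cap \ZZ\sp{n}\}$ finite gives $\dom f \ne \emptyset$ and a lower bound, and for the max side one uses the remark (Lemma~\ref{LMfntmaxfntmin}) that finiteness of the max forces finiteness of the min. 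Modulo these conventions, the proof is a short and essentially formal deduction, so I would keep it to a few lines once the identification $f\sp{\bullet}(0) = \min_{r}\{\Phi\sp{\bullet}(-r) - \mu(r)\}$ is established.
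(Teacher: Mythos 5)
There is a genuine gap, and it sits exactly where you wave at a ``standard infimal convolution under conjugation manipulation.'' Your choice $f=\Phi+\delta_{S}$, $\Psi\equiv 0$ makes Theorem~\ref{THminmaxICfnSpfnZZ} vacuous: since $\Psi\sp{\circ}(p)=-\infty$ for $p\neq 0$, the right-hand side of \eqref{minmaxICfnSpfnZZ} collapses to $-f\sp{\bullet}(0)=\min_{x}f(x)$, so the main theorem only returns the triviality $\min f=\min f$ (this is precisely the degenerate situation of Remark~\ref{RMlinfn} with $c=0$). All the actual content of \eqref{minmaxZZintcnvA} is then carried by your claimed identity
$f\sp{\bullet}(p)=\min\{\Phi\sp{\bullet}(p-r)+\delta_{S}\sp{\bullet}(r)\mid r\in\ZZ\sp{n}\}$,
but in the discrete setting this is not a formal manipulation: only the inequality $f\sp{\bullet}(p)\leq\Phi\sp{\bullet}(p-r)+\delta_{S}\sp{\bullet}(r)$ (weak duality) is automatic, and the reverse inequality with the minimum attained at an \emph{integer} $r$ is exactly integral strong Fenchel duality for the pair $(\Phi,\delta_{S})$. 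Specializing to $p=0$ it reads $-\min\{\Phi(x)\mid x\in S\}=\min_{r}\{\Phi\sp{\bullet}(-r)-\mu(r)\}$, i.e.\ it \emph{is} \eqref{minmaxZZintcnvA}. So the argument is circular: the nontrivial step is neither supplied by your invocation of the main theorem nor proved independently. (Your side remarks are fine but peripheral: $\Phi+\delta_{S}$ is indeed integrally convex, and $\delta_{S}\sp{\bullet}(r)=-\mu(-r)$ does hold because $P$ is an integer polyhedron.)

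The repair is to distribute the two ingredients the other way, which is what the paper does: take $f=\delta_{P\cap\ZZ\sp{n}}$, which is integer-valued integrally convex because $P$ is box-integer, and take $\Psi=-\Phi$ as the separable concave function. Then
$\mu(p)=\min\{\langle p,x\rangle\mid x\in P\}=-\delta\sp{\bullet}(-p)$ and $\Phi\sp{\bullet}(p)=-\Psi\sp{\circ}(-p)$,
so after the change of variable $p\mapsto-p$ the right-hand side of \eqref{minmaxZZintcnvA} becomes $\max\{\Psi\sp{\circ}(p)-\delta\sp{\bullet}(p)\mid p\in\ZZ\sp{n}\}$ and the left-hand side becomes $\min\{\delta(x)-\Psi(x)\mid x\in\ZZ\sp{n}\}$; now Theorem~\ref{THminmaxICfnSpfnZZ} applies with genuine content and yields the theorem directly, with no conjugate-of-a-sum identity needed.
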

\begin{proof}
Denote the indicator function of $P \cap \ZZ\sp{n}$ by $\delta$,
which is an integer-valued integrally convex function
because $P$ is a box-integer polyhedron.
With the use of $\Psi(x) = -\Phi(x)$ we have
\[
\mbox{LHS of \eqref{minmaxZZintcnvA}}
=  \min \{ \Phi (x) \mid  x\in P \cap \ZZ\sp{n} \} 
=  \min \{ \delta(x)  -\Psi (x) \mid  x\in \ZZ\sp{n} \} .
\]
On the other hand, on noting
\begin{align*}
& \mu(p) 
= \min \{ \langle p, x \rangle \mid  x\in P\}
= - \max \{ \langle -p, x \rangle \mid x \in P \cap \ZZ\sp{n} \}
= - \delta\sp{\bullet}(-p),
\\ &
\Phi\sp{\bullet}(p)
= \max \{ \langle p, x \rangle - \Phi(x) \}
= - \min \{ \langle -p, x \rangle - \Psi(x) \}
= - \Psi\sp{\circ}(-p),
\end{align*}
we obtain
\begin{align*}
 \mbox{RHS of \eqref{minmaxZZintcnvA}}
& = \max \{ \mu(p) - \Phi\sp{\bullet}(p) \mid  p\in \ZZ\sp{n}\}
\\ & 
= \max \{  \Psi\sp{\circ}(-p) - \delta\sp{\bullet}(-p) \mid  p\in \ZZ\sp{n}\}
\\ & 
= \max \{  \Psi\sp{\circ}(p) - \delta\sp{\bullet}(p) \mid  p\in \ZZ\sp{n}\}.
\end{align*}
Therefore,
\eqref{minmaxZZintcnvA} is equivalent to
\[
 \min \{ \delta(x)  -\Psi (x) \mid  x\in  \ZZ\sp{n} \} 
= \max \{  \Psi\sp{\circ}(p) - \delta\sp{\bullet}(p) \mid  p\in \ZZ\sp{n}\},
\]
which is a special case of \eqref{minmaxICfnSpfnZZ} in Theorem~\ref{THminmaxICfnSpfnZZ}.
\end{proof}

\begin{remark} \rm \label{RMboxTDI}
Theorem~\ref{THboxint}
generalizes a recent result of Frank--Murota \cite[Theorem 3.4]{FM20boxTDI},
which asserts the min-max formula \eqref{minmaxZZintcnvA}
when $P$ is a box-TDI polyhedron
and the minimum is finite.
Note that  
a box-TDI polyhedron is a special case of a box-integer polyhedron.
See, e.g., \cite{Sch03} for the definition of a box-TDI polyhedron.
\finbox
\end{remark}

Theorem~\ref{THminmaxICfnSpfnZZ} also implies
a min-max theorem of Cunningham--Green-Kr{\'o}tki \cite{CG91degseq}
obtained in a study of $b$-matching degree-sequence polyhedra 
and the box convolution theorem of bisubmodular functions 
by Fujishige--Patkar \cite{FP94}.
We discuss this connection in Section \ref{SCbisubbox}.

Another Fenchel-type min-max formula can be obtained by combining
our main result
(Theorem~\ref{THminmaxICfnSpfnZZ})
with the biconjugacy theorem (Theorem~\ref{THbiconjIC}).
Let $\mathcal{G}$ denote the set of 
integral conjugates of integer-valued integrally convex functions.
By the biconjugacy theorem, we can alternatively say that 
$\mathcal{G}$ is the set of integer-valued functions $g$ whose 
integral conjugate is an integer-valued integrally convex function.
That is, 
\begin{align} 
 \mathcal{G} 
&= \{ g \mid g = f\sp{\bullet} \mbox{ for some integer-valued integrally convex $f$} \}
\nonumber \\
&= \{ g \mid  \mbox{$g\sp{\bullet}$ is an integer-valued integrally convex function} \}.
\label{conjclassIC}
\end{align}

\begin{theorem} \label{THminmaxICfnConjSpfnZZ}
For a function
$g: \ZZ\sp{n} \to \ZZ \cup \{ +\infty \}$
in $\mathcal{G}$ with $\dom g \ne \emptyset$
and an integer-valued separable concave function
$\Psi: \ZZ\sp{n} \to \ZZ \cup \{ -\infty \}$
with $\dom \Psi \ne \emptyset$,
we have
\begin{align} 
 \min \{ g(x) - \Psi(x) \mid  x \in \ZZ\sp{n}  \} 
= \max \{ \Psi\sp{\circ}(p) - g\sp{\bullet}(p) \mid  p \in \ZZ\sp{n} \},
\label{minmaxICfnConjSpfnZZ} 
\end{align} 
where the minimum or the maximum is assumed to be finite. 
\end{theorem}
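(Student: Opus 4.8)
The plan is to obtain Theorem~\ref{THminmaxICfnConjSpfnZZ} from the main result Theorem~\ref{THminmaxICfnSpfnZZ} by exploiting the symmetry of the Fenchel formula together with the biconjugacy theorem (Theorem~\ref{THbiconjIC}), so that no new combinatorial argument is needed. First I would record what membership $g \in \mathcal{G}$ buys us. By the first description in \eqref{conjclassIC} we may write $g = f\sp{\bullet}$ for some integer-valued integrally convex $f$ with $\dom f \ne \emptyset$. Theorem~\ref{THbiconjIC} then gives $g\sp{\bullet} = f\sp{\bullet\bullet} = f$, so $g\sp{\bullet}$ is itself an integer-valued integrally convex function with $\dom g\sp{\bullet} = \dom f \ne \emptyset$, and moreover $g\sp{\bullet\bullet} = f\sp{\bullet} = g$.

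Next I would check that the integral concave conjugate $\Psi\sp{\circ}$ is again an integer-valued separable concave function with $\dom \Psi\sp{\circ} \ne \emptyset$, and that $(\Psi\sp{\circ})\sp{\circ} = \Psi$. Separability and concavity of $\Psi\sp{\circ}$ are immediate from \eqref{Psiconj00}--\eqref{psiconjdef00}: each $\psi_{i}\sp{\circ}$ is a pointwise minimum of affine functions of a single integer variable, hence univariate discrete concave, and it is integer-valued wherever finite; nonemptiness of $\dom \psi_{i}\sp{\circ}$ holds because any point of $\dom \psi_{i}$ possesses a supergradient. The identity $(\Psi\sp{\circ})\sp{\circ} = \Psi$ is the integral concave biconjugacy for separable concave functions; it follows by applying Theorem~\ref{THbiconjIC} to the separable convex (hence integer-valued integrally convex) function $-\Psi$ and translating through the elementary relation $h\sp{\circ}(p) = -(-h)\sp{\bullet}(-p)$, valid whenever one side is defined. (Concretely, with $F = -\Psi$ one computes $(\Psi\sp{\circ})\sp{\circ}(q) = -F\sp{\bullet\bullet}(q) = -F(q) = \Psi(q)$.)

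With these preparations in place, I would simply invoke Theorem~\ref{THminmaxICfnSpfnZZ} for the integer-valued integrally convex function $g\sp{\bullet}$ and the integer-valued separable concave function $\Psi\sp{\circ}$ in the roles of $f$ and $\Psi$. This yields
\begin{align*}
\min\{ g\sp{\bullet}(x) - \Psi\sp{\circ}(x) \mid x \in \ZZ\sp{n} \}
& = \max\{ (\Psi\sp{\circ})\sp{\circ}(p) - (g\sp{\bullet})\sp{\bullet}(p) \mid p \in \ZZ\sp{n} \}
\\ & = \max\{ \Psi(p) - g(p) \mid p \in \ZZ\sp{n} \},
\end{align*}
where the last equality uses $g\sp{\bullet\bullet} = g$ and $(\Psi\sp{\circ})\sp{\circ} = \Psi$. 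Multiplying both sides by $-1$ turns the left side into $\max\{ \Psi\sp{\circ}(x) - g\sp{\bullet}(x) \mid x \in \ZZ\sp{n} \}$ and the right side into $\min\{ g(p) - \Psi(p) \mid p \in \ZZ\sp{n} \}$; renaming the variables gives precisely \eqref{minmaxICfnConjSpfnZZ}. Finally I would match the hypotheses: the instance of Theorem~\ref{THminmaxICfnSpfnZZ} just used requires that $\min\{ g\sp{\bullet} - \Psi\sp{\circ} \}$ or $\max\{ \Psi - g \}$ be finite, which after the sign change is exactly the assumption of Theorem~\ref{THminmaxICfnConjSpfnZZ} that $\max\{ \Psi\sp{\circ} - g\sp{\bullet} \}$ or $\min\{ g - \Psi \}$ be finite.

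I do not anticipate a genuine obstacle: the argument is a formal manipulation of conjugates. The only points demanding care are the bookkeeping of the convex/concave sign conventions, the verification that $\Psi\sp{\circ}$ is a \emph{proper} separable concave function (in particular $\dom \Psi\sp{\circ} \ne \emptyset$) so that Theorem~\ref{THminmaxICfnSpfnZZ} applies to the transformed pair, and the check that the finiteness assumption is preserved under the transformation — all routine once Theorems~\ref{THminmaxICfnSpfnZZ} and~\ref{THbiconjIC} are available.
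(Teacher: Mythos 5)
Your proposal is correct and follows essentially the same route as the paper: apply Theorem~\ref{THminmaxICfnSpfnZZ} to the pair $(g\sp{\bullet},\Psi\sp{\circ})$ and then invoke the biconjugacy relations $g\sp{\bullet\bullet}=g$ (Theorem~\ref{THbiconjIC}) and $\Psi\sp{\circ\circ}=\Psi$, followed by a sign flip. Your extra checks (that $g\sp{\bullet}$ is integrally convex via $g=f\sp{\bullet}$, that $\Psi\sp{\circ}$ is proper separable concave, and that the finiteness assumption transfers) are correct details that the paper leaves implicit.
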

\begin{proof}
First note that the integral concave conjugate 
$\Psi\sp{\circ}$ of $\Psi$ is also an integer-valued separable concave function.
By replacing $(f, \Psi)$ in \eqref{minmaxICfnSpfnZZ} with 
$(g\sp{\bullet}, \Psi\sp{\circ})$ we obtain
\[
 \min \{ g\sp{\bullet}(x) - \Psi\sp{\circ}(x) \mid  x \in \ZZ\sp{n}  \} 
= \max \{ \Psi\sp{\circ\circ}(p) - g\sp{\bullet\bullet}(p)\mid  p \in \ZZ\sp{n} \}.
\]
With the biconjugacy
$\Psi\sp{\circ\circ}= \Psi$ and $g\sp{\bullet\bullet}=g$,
where the former is well known and 
the latter is due to Theorem~\ref{THbiconjIC},
we can rewrite this formula to 
\[
 \min \{ g\sp{\bullet}(x) - \Psi\sp{\circ}(x) \mid  x \in \ZZ\sp{n}  \} 
= \max \{ \Psi(p) - g(p)\mid  p \in \ZZ\sp{n} \},
\]
which is equivalent to \eqref{minmaxICfnConjSpfnZZ}.
\end{proof}

\begin{remark} \rm \label{RMunivar}
It is often possible \cite{FM19partII,FM20boxTDI}
to obtain an explicit form of the integral conjugate function 
of an integer-valued separable convex (or concave) function.
For example, we have:
\begin{align}
& 
\mbox{If $\Phi(x) = C \| x \|_{1}$, then} \quad
\Phi\sp{\bullet}(p) =
\begin{cases} 
 0  &  ( \| p \|_{\infty} \leq C), 
\\ 
+\infty & (\mbox{\rm otherwise}),
\end{cases}
\\ &
\mbox{If $\Phi(x) = C \| x \|_{2}\sp{2}$, then} \quad
\Phi\sp{\bullet}(p) =
\sum_{i=1}\sp{n}
   \llfloor  \frac{p_{i} + C}{2 C}\rrfloor  \ 
 \left( p_{i} -  C \llfloor  \frac{p_{i} + C}{2 C}\rrfloor  \right). 
\end{align}
These expressions can be derived easily from the following facts.

{\rm (1)}
Let $\alpha$ be a nonnegative integer and $k_{0}$ an integer,
and define
\begin{equation*}  
 \varphi_{(1)} (k; \alpha, k_{0}) := \alpha |k - k_0 |  \quad (k\in {\bf Z}).  
\end{equation*}
The discrete conjugate of this function 
is given, for integers $\ell$, by 
\begin{equation*} 
\varphi_{(1)}\sp{\bullet} (\ell; \alpha, k_{0})
=
\begin{cases} 
 k_0 \ell  & (|\ell| \leq \alpha) , 
\\ 
+\infty &  (\mbox{\rm otherwise}). 
\end{cases}
\end{equation*}

{\rm (2)}
Let $\beta$ be a positive integer and 
$k_{0}$ an integer, and define
\begin{equation*}  
 \varphi_{(2)} (k; \beta, k_{0}) := \beta (k - k_{0})\sp{2} \quad (k\in {\bf Z}).  
\end{equation*}
The discrete conjugate of this function 
is given, for integers $\ell$, by 
\begin{equation*}
 \varphi_{(2)} \sp{\bullet} (\ell;\beta, k_{0})
 = k_{0} \ell +  \llfloor  \frac{\ell + \beta}{2 \beta}\rrfloor  \ 
 \left( \ell -  \beta \llfloor  \frac{\ell + \beta}{2\beta}\rrfloor  \right). 
\end{equation*}
When $\beta=1$ we have a simpler expression:
\begin{equation*}
 \varphi_{(2)} \sp{\bullet} (\ell; 1, k_{0}) = 
 k_{0} \ell + 
   \llfloor  \frac{\ell}{2}\rrfloor  
  \  \llceil \frac{\ell}{2}\rrceil .
\end{equation*}
\finbox
\end{remark}

\subsection{Fenchel duality for other function classes}
\label{SCfencthmComp}

The discrete Fenchel-type duality theorem, in its general form,
asserts the min-max formula
\begin{align} 
 \min \{ f(x) - g(x) \mid  x \in \ZZ\sp{n}  \} 
= \max \{ g\sp{\circ}(p) - f\sp{\bullet}(p) \mid  p \in \ZZ\sp{n} \}
\label{minmaxGenZZ} 
\end{align} 
under the assumption that $f$ and $g$ are equipped with 
certain specified discrete convexity and concavity. 
In this section we summarize our present knowledge
by compiling the results of this paper and
the known facts in discrete convex analysis \cite{Mdcasiam}.

To this end we introduce notations 
for classes of functions 
$f: \ZZ\sp{n} \to \ZZ \cup \{ +\infty \}$:
\begin{align*} 
 \mathcal{F} 
&= \{ f \mid \mbox{$f$ is an integer-valued integrally convex function} \},
\nonumber \\
 \mathcal{G} 
&= \{ f \mid \mbox{$f$ is the integral conjugate of an integer-valued integrally convex function} \},
\nonumber \\
 \mathcal{L} 
&= \{ f \mid \mbox{$f$ is an integer-valued \Lnat-convex function} \},
\nonumber \\
 \mathcal{M} 
&= \{ f \mid \mbox{$f$ is an integer-valued \Mnat-convex function} \},
\nonumber \\
 \mathcal{S}
&= \{ f \mid \mbox{$f$ is an integer-valued separable convex function} \}.
\end{align*}
We also use notation
$\mathcal{F}\sp{\bullet}$
for the set of integral conjugates of functions in $\mathcal{F}$;
similarly for
$\mathcal{G}\sp{\bullet}$, $\mathcal{L}\sp{\bullet}$, etc.
Then we have
\[
 \mathcal{F}\sp{\bullet} =  \mathcal{G},
\quad
 \mathcal{G}\sp{\bullet} =  \mathcal{F},
\quad
 \mathcal{L}\sp{\bullet} =  \mathcal{M},
\quad
 \mathcal{M}\sp{\bullet} =  \mathcal{L},
\quad
 \mathcal{S}\sp{\bullet} =  \mathcal{S},
\]
where the relations
$\mathcal{L}\sp{\bullet} =  \mathcal{M}$
and $\mathcal{M}\sp{\bullet} =  \mathcal{L}$
are known as the discrete conjugacy theorem \cite[Theorem 8.12]{Mdcasiam}.
We have the following inclusion relations:
\begin{equation} 
 \mathcal{F} \cap \mathcal{G} \supseteq \mathcal{L},
\qquad
 \mathcal{F} \cap \mathcal{G} \supseteq \mathcal{M},
\qquad
 \mathcal{L} \cap \mathcal{M}  =  \mathcal{S},
\label{FnclassIncl}
\end{equation} 
where 
$\mathcal{L} \cap \mathcal{M}  =  \mathcal{S}$
is stated in \cite[Theorem 8.49]{Mdcasiam}.

By combining Theorems \ref{THminmaxICfnSpfnZZ} and \ref{THminmaxICfnConjSpfnZZ} 
and the known facts \cite[Theorem 8.21]{Mdcasiam}
we obtain the following table to summarize our present knowledge
about the min-max formula \eqref{minmaxGenZZ}.
For example,
``Th.\ref{THminmaxICfnSpfnZZ}''
in the upper-right corner of the table
indicates that 
\eqref{minmaxGenZZ}
for $(f,g)$ with $f \in \mathcal{F}$ and $-g \in \mathcal{S}$
is established in 
Theorem~\ref{THminmaxICfnSpfnZZ} of this paper.
An entry ``Cor.'' at $(f,g)$ means that 
\eqref{minmaxGenZZ} holds for this $(f,g)$,
which is a corollary of a result indicated in the same row or column
(because of the inclusion relations \eqref{FnclassIncl}).
An entry ``CntEx'' at $(f,g)$ means that
there is a counterexample that 
denies \eqref{minmaxGenZZ} for this $(f,g)$.
\begin{equation} 
\label{fencYesNo}
\begin{array}{c|cccccc}
 f  \ \ \backslash \   -g  
  &  \mathcal{F} & \mathcal{G} &  \mathcal{L} & \mathcal{M} & \mathcal{S}
\\
\hline
\mathcal{F} &  \mbox{\rm CntEx } & \mbox{\rm CntEx } &  \mbox{\rm CntEx } &  \mbox{\rm CntEx } & {\rm Th.\ref{THminmaxICfnSpfnZZ}} 
\\
\mathcal{G} &  \mbox{\rm CntEx } & \mbox{\rm CntEx } & \mbox{\rm CntEx } &  \mbox{\rm CntEx } & \mbox{\rm Th.\ref{THminmaxICfnConjSpfnZZ}} 
\\
\mathcal{L} &  \mbox{\rm CntEx } & \mbox{\rm CntEx } & \mbox{\rm \cite[Th.8.21]{Mdcasiam}} & \mbox{\rm CntEx }& {\rm Cor.}
\\
\mathcal{M} &  \mbox{\rm CntEx } & \mbox{\rm CntEx } & \mbox{\rm CntEx }& \mbox{\rm \cite[Th.8.21]{Mdcasiam}} & {\rm Cor.}  
\\
\mathcal{S} & \mbox{\rm Th.\ref{THminmaxICfnSpfnZZ}} & 
       \mbox{\rm Th.\ref{THminmaxICfnConjSpfnZZ}}  & {\rm Cor.} & {\rm Cor.} & {\rm Cor.}  
\\
\hline
\end{array}
\end{equation}

The following two examples 
show that the min-max formula \eqref{minmaxGenZZ}
is not necessarily true for \Mnat-convex $f$ and \Lnat-concave $g$.
By the inclusion relations in \eqref{FnclassIncl}, they also
serve as counterexamples for all entries ``CntEx'' in \eqref{fencYesNo}.
In the following,
$\overline{f}$ denotes the convex envelope of $f$ and
$\overline{g}$ the concave envelope of $g$;
we have
$\overline{f}, \overline{g}: \RR\sp{2} \to \RR$ (finite-valued).

\begin{example}[{\cite[Example 5.6]{Mbonn09}}]  \rm \label{EXnorealfenc}
Let $f, g: \ZZ\sp{2} \to \ZZ$ be defined as
\[
f(x_{1},x_{2}) = |x_{1}+x_{2}-1|,
\qquad
g(x_{1},x_{2}) = 1- |x_{1}-x_{2}|.
\]
The function $f$ is \Mnat-convex and $g$ is \Lnat-concave (actually L-concave).
We have
$ \min\{ f - g \} = 0$,
whereas 
$\min\{ \overline{f} - \overline{g} \}=-1$.
The integral conjugate functions
are given by
\[
f\sp{\bullet}(p_{1}, p_{2})  =
   \left\{  \begin{array}{ll}
    p_{1}            &   ((p_{1}, p_{2}) \in S),      \\
   + \infty      &   (\mbox{otherwise}),  \\
                      \end{array}  \right.
\quad
 g\sp{\circ}(p_{1}, p_{2})  =
   \left\{  \begin{array}{ll}
    -1            &   ((p_{1}, p_{2}) \in T),      \\
   - \infty      &   (\mbox{otherwise})  \\
                      \end{array}  \right.
\]
with
$S=\{ (-1,-1), (0,0), (1,1) \}$ and
$T=\{ (-1,1), (0,0), (1,-1) \}$.
Hence
$\max\{ g\sp{\circ} - f\sp{\bullet} \}
 = g\sp{\circ}(0,0) - f\sp{\bullet}(0,0) = -1 - 0 = -1$.
Therefore,
\begin{equation}
  \begin{array}{ccccccc}
 \min\{ f - g \}
 & > & \min\{ \overline{f} - \overline{g} \}
 & = &   \max\{ \overline{g}\sp{\circ}
         - \overline{f}\sp{\bullet}  \}
 & = &  \max\{ g\sp{\circ} - f\sp{\bullet} \}  
\\
 (0) &  & (-1)  & & (-1)  && (-1) 
  \end{array} 
\label{minminFailEx}
\end{equation}
(cf., \eqref{minminZRgen}, \eqref{maxmaxZRgen} in Section \ref{SCproofminmax}).
Note that the min-max identity fails because of the integrality gap
in the minimization problem.
Finally we add that the function
\begin{equation*} 
h(x_1,x_2) = f(x_1,x_2) - g(x_1,x_2)
  = |x_{1}+x_{2}-1| - (  1- |x_{1}-x_{2}| )
\end{equation*}
is an integrally convex function, with
\[
h(0,0) = 1 - 1 = 0, \quad
h(1,0) = 0 - 0 = 0, \quad
h(0,1) = 0 - 0 = 0, \quad
h(1,1) = 1 - 1 = 0
\]
\finbox
\end{example}

\begin{example}[{\cite[Example 5.7]{Mbonn09}}]  \rm \label{EXnointfenc}
Let $f, g: \ZZ\sp{2} \to \ZZ$ be defined as
\[
f(x_{1},x_{2}) = \max(0,x_{1}+x_{2}),
\qquad
g(x_{1},x_{2}) = \min(x_{1},x_{2}).
\]
The function $f$ is \Mnat-convex and $g$ is \Lnat-concave (actually L-concave).
We have
$ \min\{ f - g \} 
= \min\{ \overline{f} - \overline{g} \}=0$.
The integral conjugate functions are given as
$f\sp{\bullet} = \delta_{S}$ and
$g\sp{\circ} = - \delta_{T}$
in terms of the indicator functions of
 $S=\{ (0,0), (1,1) \}$ and $T=\{ (1,0), (0,1) \}$.
Since $S \cap T = \emptyset$,
$g\sp{\circ} - f\sp{\bullet}$ is identically equal to $-\infty$,
whereas 
$\max\{ \overline{g}\sp{\circ} - \overline{f}\sp{\bullet} \} = 0$
since 
$\overline{f}\sp{\bullet} = \delta_{\overline{S}}$,
$\overline{g}\sp{\circ} = -\delta_{\overline{T}}$,
and
$\overline{S} \cap \overline{T} = \{ (1/2, 1/2) \}$,
where
$\overline{S}$ and $\overline{T}$ denote
the convex hulls of $S$ and $T$, respectively.
Therefore,
\begin{equation}
  \begin{array}{ccccccc}
 \min\{ f - g \}
 & = & \min\{ \overline{f} - \overline{g} \}
 & = &   \max\{ \overline{g}\sp{\circ}
         - \overline{f}\sp{\bullet}  \}
 & > &  \max\{ g\sp{\circ} - f\sp{\bullet} \} 
\\
 (0) &  &  (0) & & (0) && (-\infty)
  \end{array} 
\label{maxmaxFailEx}
\end{equation}
(cf., \eqref{minminZRgen}, \eqref{maxmaxZRgen} in Section \ref{SCproofminmax}).
Note that the min-max identity fails because of the integrality gap
in the maximization problem.
Finally we add that the function
$h(x_1,x_2)  = \max(0,x_{1}+x_{2}) - \min(x_{1},x_{2}) = \max(|x_1|, |x_2|)$
is integrally convex.
\finbox
\end{example}

\subsection{Proof of Theorem~\ref{THminmaxICfnSpfnZZ}}
\label{SCproofminmax}

The main theorem (Theorem~\ref{THminmaxICfnSpfnZZ}) is proved in this section.
The proof consists of four steps.
In Steps 1 to 3, we prove the min-max formula \eqref{minmaxICfnSpfnZZ}
under the assumption that the minimum in \eqref{minmaxICfnSpfnZZ}
is finite,
while Step 4 deals with the other case
where the maximum in \eqref{minmaxICfnSpfnZZ} is assumed to be finite.

\subsubsection{Step 1: weak duality}

We start with the generic form of the Fenchel duality:
\begin{equation} 
 \min \{ f(x) - g(x) \mid  x \in \ZZ\sp{n}  \} 
= \max \{ g\sp{\circ}(p) - f\sp{\bullet}(p) \mid  p \in \ZZ\sp{n} \} .
\label{minmaxGenZZ2} 
\end{equation} 
For any functions
$f: \ZZ\sp{n} \to \ZZ \cup \{ +\infty \}$
and
$g: \ZZ\sp{n} \to \ZZ \cup \{ -\infty \}$
and for any integer vectors $x$ and $p$,
we have the following obvious relations:
\begin{align} 
 f (x) - g(x) 
&= (\langle p, x \rangle - g(x) )  - ( \langle p, x \rangle - f(x) )
\nonumber \\ 
& \geq 
\min_{y \in \ZZ\sp{n}} ( \langle p, y \rangle - g(y) ) 
- \max_{y \in \ZZ\sp{n}}(\langle p, y \rangle - f(y))
\nonumber \\ & =
 g\sp{\circ}(p) - f\sp{\bullet}(p).
\label{weakdualZZ} 
\end{align} 
This implies ``\,$\min \geq \max$\,'' in \eqref{minmaxGenZZ2}, that is,
\begin{equation} 
 \min \{ f(x) - g(x) \mid  x \in \ZZ\sp{n}  \} 
\geq \max \{ g\sp{\circ}(p) - f\sp{\bullet}(p) \mid  p \in \ZZ\sp{n} \}.
\label{minmaxGenWeak} 
\end{equation} 
This is called the {\em weak duality},
whereas \eqref{minmaxGenZZ2} is the {\em strong duality}.

Since the functions $f$ and $g$ are integer-valued,
the minimum on the left-hand side of \eqref{minmaxGenZZ2} is 
either a (finite) integer or $-\infty$
when $\dom f \cap \dom g \neq \emptyset$.
Therefore, if the minimum 
is finite, then 
there exists a vector $x\sp{*} \in \ZZ\sp{n}$
that attains the minimum.

To prove strong duality \eqref{minmaxGenZZ2} from weak duality \eqref{minmaxGenWeak}, 
it suffices to show the  existence of an integer vector 
$p\sp{*}$
for which the inequality in \eqref{weakdualZZ}
is an equality for $x = x\sp{*}$.
Thus the proof of \eqref{minmaxGenZZ2}
is reduced to showing the existence of
$p\sp{*} \in \ZZ\sp{n}$
satisfying
\begin{equation} 
x\sp{*} \in 
\argmax_{y \in \ZZ\sp{n}} (\langle p\sp{*}, y \rangle - f(y) ) 
          \cap 
\argmin_{y \in \ZZ\sp{n}} ( \langle p\sp{*}, y \rangle - g(y) ) .
\label{argminmaxZ} 
\end{equation}

\subsubsection{Step 2: convex extension}
\label{SCconvext}

We continue to work with the generic form 
\eqref{minmaxGenZZ2} and consider 
the continuous relaxation of the dual
(maximization) problem, in which the variable $p$ may be a real vector.

To this end, we assume that 
$f$ is extensible to a convex function
$\overline{f}: \RR\sp{n} \to \RR \cup \{ +\infty \}$
and 
$g$ is extensible to a concave function
$\overline{g}: \RR\sp{n} \to \RR \cup \{ -\infty \}$.
For technical reasons, it is further assumed that 
$\overline{f}$ and $-\overline{g}$ are locally polyhedral convex functions
in the sense that they are polyhedral convex functions 
when restricted to any integral box.
(This technical condition is met when $f$ is an integrally convex function
and $g$ is a separable concave function.) 

The definitions of conjugate functions are adapted for real vectors $p$ as
\begin{align*}
f\sp{\bullet}(p) 
& = \max \{ \langle p, x \rangle  - f(x) \mid  x \in \ZZ\sp{n} \}
\qquad ( p\in \RR\sp{n}),
\\
g\sp{\circ}(p) & = \min \{ \langle p, x \rangle -g(x)  \mid   x \in \ZZ\sp{n} \}
\qquad ( p\in \RR\sp{n}),
\end{align*}
which are compatible with 
\eqref{conjvexZpZ2} and \eqref{conjcavZpZ2}
for integer vectors $p$.
Then we have:
\begin{align} 
 \min \{ f(x) - g(x)  \mid  x \in \ZZ\sp{n}  \} 
& \geq \min \{ \overline{f}(x) -  \overline{g}(x) \mid x \in \RR\sp{n} \} 
\label{minminZRgen} 
\\ 
& \phantom{AAAAAA}  ||
\nonumber \\
 \max \{  g\sp{\circ}(p) - f\sp{\bullet}(p) \mid  p \in \ZZ\sp{n} \}
&\leq  \max \{  g\sp{\circ}(p) - f\sp{\bullet}(p) \mid  p \in \RR\sp{n} \} ,
\label{maxmaxZRgen} 
\end{align} 
where the vertical equality 
``\,$||$\,'' 
connecting \eqref{minminZRgen} and \eqref{maxmaxZRgen}
is the Fenchel duality in convex analysis \cite{BL06,HL01,Roc70},
which holds if
$\overline{f}$ and $-\overline{g}$ are locally polyhedral convex functions
and the minimum is finite.

In general, we may have strict inequalities
in \eqref{minminZRgen} and  \eqref{maxmaxZRgen},
as is demonstrated by
\eqref{minminFailEx} in Example \ref{EXnorealfenc} 
and
\eqref{maxmaxFailEx} in Example \ref{EXnointfenc}, respectively.
The following lemma states that 
an equality does hold in \eqref{minminZRgen} 
in the setting of Theorem~\ref{THminmaxICfnSpfnZZ}.

\begin{lemma}  \label{LMminmaxZR}
For an integer-valued integrally convex function
$f: \ZZ\sp{n} \to \ZZ \cup \{ +\infty \}$
with $\dom f \ne \emptyset$
and an integer-valued separable concave function
$\Psi: \ZZ\sp{n} \to \ZZ \cup \{ -\infty \}$
with $\dom \Psi \ne \emptyset$,
we have
\begin{align} 
 \min \{ f(x) - \Psi(x) \mid  x \in \ZZ\sp{n}  \} 
= \min \{ \overline{f}(x) - \overline{\Psi}(x) \mid  x \in \RR\sp{n}  \} ,
\label{minminZRintcnv} 
\end{align} 
and hence
\begin{align} 
 \min \{ f(x) - \Psi(x) \mid  x \in \ZZ\sp{n}  \} 
= \max \{ \Psi\sp{\circ}(p) - f\sp{\bullet}(p) \mid  p \in \RR\sp{n} \},
\label{minmaxZRintcnv} 
\end{align} 
where the minimum on the left-hand side is assumed to be finite. 
\end{lemma}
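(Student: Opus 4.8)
The plan is to establish the two displayed identities in turn, the real work lying entirely in the first one, \eqref{minminZRintcnv}; the second, \eqref{minmaxZRintcnv}, then follows at once by invoking the continuous Fenchel duality already recorded in \eqref{minminZRgen}--\eqref{maxmaxZRgen}.

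To prove \eqref{minminZRintcnv} I would set $h = f - \Psi$ and note that $-\Psi$ is an integer-valued separable convex function, so that Proposition~\ref{PRcnvextICsep}(1), applied with $\Phi = -\Psi$, gives $\overline{h} = \overline{f} + \overline{-\Psi} = \overline{f} - \overline{\Psi}$, where $\overline{-\Psi} = -\overline{\Psi}$ because the convex envelope of $-\Psi$ is the negative of the concave envelope of $\Psi$. It therefore suffices to show $\min_{x \in \RR^{n}} \overline{h}(x) = \min_{x \in \ZZ^{n}} h(x)$. Write $\beta$ for the right-hand side, a finite integer under the hypothesis (cf.\ Remark~\ref{RMminmaxAssmp}). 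One inequality is immediate: picking $x^{*} \in \ZZ^{n}$ with $h(x^{*}) = \beta$ gives $\min_{x\in\RR^{n}} \overline{h}(x) \le \overline{h}(x^{*}) \le h(x^{*}) = \beta$. For the reverse inequality I would observe that the constant function $x \mapsto \beta$ is convex and satisfies $\beta \le h(x)$ for every $x \in \ZZ^{n}$ by the definition of $\beta$; since $\overline{h}$ is, by definition, the pointwise largest convex function lying below $h$ on $\ZZ^{n}$, this forces $\beta \le \overline{h}(x)$ for all $x \in \RR^{n}$, i.e.\ $\min_{x\in\RR^{n}} \overline{h}(x) \ge \beta$. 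Hence \eqref{minminZRintcnv} holds.

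For \eqref{minmaxZRintcnv} I would note that an integrally convex $f$ has convex envelope $\overline{f} = \tilde{f}$ which is locally polyhedral convex, and that for a separable concave $\Psi$ the function $-\overline{\Psi}$ is likewise locally polyhedral convex; hence, the minimum being finite, the vertical equality ``$||$'' connecting \eqref{minminZRgen} and \eqref{maxmaxZRgen} applies and yields $\min_{x \in \RR^{n}}(\overline{f}(x) - \overline{\Psi}(x)) = \max_{p \in \RR^{n}}(\Psi^{\circ}(p) - f^{\bullet}(p))$, the conjugates here being the ones in \eqref{conjvexZpZ2}--\eqref{conjcavZpZ2} by the compatibility noted in Step~2. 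Combining this with \eqref{minminZRintcnv} gives \eqref{minmaxZRintcnv}.

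There is no genuine obstacle here: the entire weight of the lemma rests on Proposition~\ref{PRcnvextICsep}(1), which may be assumed. What deserves emphasis is that this is exactly the place where integral convexity of $f$ and separability of $\Psi$ enter essentially — for a pair of general integrally convex functions the identity $\overline{f+g} = \overline{f}+\overline{g}$ fails, and Examples~\ref{EXnorealfenc} and~\ref{EXnointfenc} exhibit a strict integrality gap in \eqref{minminZRgen}, so the conclusion $\min\{f-\Psi\} = \min\{\overline{f}-\overline{\Psi}\}$ genuinely uses the structure of the two factors.
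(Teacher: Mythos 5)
Your proposal is correct and follows essentially the same route as the paper: both rest on Proposition~\ref{PRcnvextICsep}(1) to get $\overline{f-\Psi}=\overline{f}-\overline{\Psi}$ and then invoke the continuous Fenchel duality linking \eqref{minminZRgen} and \eqref{maxmaxZRgen} for the second identity. The only (harmless) difference is that you justify $\min_{x\in\RR^{n}}\overline{f-\Psi}(x)=\min_{x\in\ZZ^{n}}(f-\Psi)(x)$ directly via the constant convex minorant, whereas the paper passes through the convex extensibility of $f-\Psi$; the two arguments amount to the same elementary fact.
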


\begin{proof}
Since $f$ is integrally convex 
and $\Psi$ is separable concave,
the difference $f - \Psi$ is convex extensible
and its convex extension is equal to $\overline{f} - \overline{\Psi}$
by Proposition \ref{PRcnvextICsep},
that is, 
$\overline{f - \Psi} = \overline{f} - \overline{\Psi}$.
Therefore,
\begin{align*} 
& \min \{ f(x) - \Psi(x) \mid  x \in \ZZ\sp{n}  \} 
= \min \{ (\overline{f - \Psi})(x) \mid  x \in \ZZ\sp{n}  \} 
\\ & = \min \{ (\overline{f - \Psi})(x) \mid  x \in \RR\sp{n}  \} 
= \min \{ \overline{f}(x) - \overline{\Psi}(x) \mid  x \in \RR\sp{n}  \} ,
\end{align*} 
which shows \eqref{minminZRintcnv}.
Then \eqref{minmaxZRintcnv} follows from 
the equality 
``\,$||$\,'' 
between \eqref{minminZRgen} and \eqref{maxmaxZRgen}.
\end{proof}

Let $x\sp{*} \in \ZZ\sp{n}$
be an optimal solution to the primal problem 
(i.e., a minimizer on the left-hand side of \eqref{minmaxZRintcnv}).
Let $\hat p \in \RR\sp{n}$
be an optimal solution to the dual problem 
(i.e., a maximizer on the right-hand side of \eqref{minmaxZRintcnv}),
which is guaranteed to exist by the Fenchel duality 
in convex analysis (for continuous variables).
Then we have
\begin{align} 
x\sp{*} \in 
\argmax_{y \in \ZZ\sp{n}} (\langle \hat p, y \rangle - f(y) )
          \cap 
\argmin_{y \in \ZZ\sp{n}} ( \langle \hat p, y \rangle - \Psi(y) ). 
\label{argminmaxR} 
\end{align}

As is well known, this condition can be expressed in terms of subdifferentials as follows.
First we have
\[
 x\sp{*} \in \argmax_{y \in \ZZ\sp{n}} (\langle p, y \rangle - f(y) )
\iff 
 p \in \subgR f(x\sp{*}),
\]
where the definition of $\subgR f(x)$ is given in \eqref{subgZRdef00}.  
On setting $\Phi(x) := -\Psi(x)$ we similarly have
\[
 x\sp{*} \in \argmin_{y \in \ZZ\sp{n}} ( \langle p, y \rangle - \Psi(y) ) 
\iff 
 x\sp{*} \in \argmax_{y \in \ZZ\sp{n}} ( \langle -p, y \rangle - \Phi(y) ) 
\iff 
 -p \in \subgR \Phi (x\sp{*}).
\]
Accordingly, \eqref{argminmaxR} can be rewritten as
\begin{align} 
 \hat p \in  \subgR f(x\sp{*}) \cap (- \subgR \Phi(x\sp{*})).
\label{subgrZR} 
\end{align} 
Since such $\hat p$ exists,
we have, in particular, that 
\begin{align} 
 \subgR f(x\sp{*}) \cap (-\subgR \Phi(x\sp{*}))
 \neq \emptyset .
\label{subgrZRnonemp} 
\end{align}

\subsubsection{Step 3: dual integrality}
\label{SCintdualopt}

By adding integrality requirement
to \eqref{subgrZR},
we obtain the condition 
\begin{align} 
 p\sp{*} \in 
 \subgR f(x\sp{*}) \cap (-\subgR \Phi(x\sp{*}))
 \cap \ZZ\sp{n} 
\label{subgrZZ} 
\end{align} 
for an integral dual optimal solution $p\sp{*}$.
Note that this is equivalent to (or rewriting of)
the optimality condition given in \eqref{argminmaxZ} 
(with $g=\Psi = -\Phi$).
Thus, our task of proving 
Theorem~\ref{THminmaxICfnSpfnZZ}
is reduced to showing 
\begin{align} 
 \subgR f(x\sp{*}) \cap (- \subgR \Phi(x\sp{*}))
 \cap \ZZ\sp{n} 
 \neq \emptyset .
\label{subgrZZnonemp} 
\end{align}

Since $\Phi$ ($=-\Psi$) is an integer-valued separable convex function
defined on $\ZZ\sp{n}$,
the subdifferential
$\subgR \Phi(x\sp{*})$ is an integral box.
Hence we have
\begin{equation} \label{PhisubgR}
-\subgR \Phi(x\sp{*}) 
= \{ p \in \RR\sp{n} \mid \alpha_{j} \leq p_{j} \leq \beta_{j} \ \ (j=1,2,\ldots,n) \}
\end{equation}
for some 
$\alpha \in (\ZZ \cup \{ -\infty \})\sp{n}$ and
$\beta \in (\ZZ \cup \{ +\infty \})\sp{n}$.

Let $B=- \subgR \Phi(x\sp{*})$.
Then 
(i)  $B$ is an integral box by \eqref{PhisubgR} and 
(ii) 
$\subgR f(x\sp{*}) \cap B \neq \emptyset$
by \eqref{subgrZRnonemp}.
We want to show that these conditions imply
$\subgR f(x\sp{*}) \cap B
 \cap \ZZ\sp{n} 
 \neq \emptyset$
in \eqref{subgrZZnonemp}.
The main technical result (Theorem~\ref{THICsubgrBox})
states that this is indeed the case,
completing the proof of Theorem~\ref{THminmaxICfnSpfnZZ}.
The proof of Theorem~\ref{THICsubgrBox} is given in the next section.

\subsubsection{Step 4: finiteness assumption}
\label{SCfntmaxfntmin}

It remains to show that the finiteness of the maximum in \eqref{minmaxICfnSpfnZZ}
implies the finiteness of the minimum in \eqref{minmaxICfnSpfnZZ}.

\begin{lemma}\label{LMfntmaxfntmin}
For an integer-valued integrally convex function
$f: \ZZ\sp{n} \to \ZZ \cup \{ +\infty \}$
and an integer-valued separable concave function
$\Psi: \ZZ\sp{n} \to \ZZ \cup \{ -\infty \}$,
if $\max \{ \Psi\sp{\circ}(p) - f\sp{\bullet}(p) \mid  p \in \ZZ\sp{n} \}$ 
is finite, then 
$\min \{ f(x) - \Psi(x) \mid  x \in \ZZ\sp{n}  \}$
is also finite.
\end{lemma}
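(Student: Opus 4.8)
The plan is to prove the statement in its essential form: \emph{assuming the maximum is finite, show $\dom f\cap\dom\Psi\ne\emptyset$}. This suffices, because weak duality \eqref{minmaxGenWeak} gives $\min\{f(x)-\Psi(x)\}\ge\max\{\Psi\sp{\circ}(p)-f\sp{\bullet}(p)\}$, so a finite maximum already forces the minimum to be bounded below; since $f-\Psi$ is integer-valued, the minimum is then finite precisely when $\dom f\cap\dom\Psi\ne\emptyset$. So I would argue by contradiction: suppose $\dom f\cap\dom\Psi=\emptyset$ while the maximum is a finite integer $M$.

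First I would extract structure from finiteness of the maximum. Since $\Psi\sp{\circ}(p)-f\sp{\bullet}(p)\in\ZZ\cup\{-\infty\}$, a finite maximum is attained at some $p\sp{*}\in\ZZ\sp{n}$, and then $c_{1}:=f\sp{\bullet}(p\sp{*})$ and $c_{2}:=\Psi\sp{\circ}(p\sp{*})$ are finite with $M=c_{2}-c_{1}$. Second, I would separate the effective domains: $\dom\Psi=\prod_{i}\dom\psi_{i}$ is an integer box and $\dom f$ is integrally convex, so $\overline{\dom f}$ is a box-integer (hence rational) polyhedron, and by Proposition~\ref{PRcnvextICsep}(2) the disjointness $\dom f\cap\dom\Psi=\emptyset$ upgrades to $\overline{\dom f}\cap\overline{\dom\Psi}=\emptyset$. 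Then $\overline{\dom\Psi}-\overline{\dom f}$ is a rational polyhedron not containing the origin, so its point $r$ nearest the origin is a nonzero rational vector with $\langle r,z\rangle\ge\|r\|_{2}\sp{2}>0$ for all $z$ in it, i.e.\ $\langle r,y\rangle-\langle r,x\rangle\ge\|r\|_{2}\sp{2}$ for all $x\in\dom f$, $y\in\dom\Psi$. Clearing denominators gives $q\in\ZZ\sp{n}$ and $\delta>0$ with $\langle q,y\rangle-\langle q,x\rangle\ge\delta$ for all $x\in\dom f$, $y\in\dom\Psi$; hence $\gamma:=\sup_{x\in\dom f}\langle q,x\rangle$ and $\gamma':=\inf_{y\in\dom\Psi}\langle q,y\rangle$ are finite and $\gamma'-\gamma\ge\delta>0$.

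Finally I would evaluate the dual objective along the ray $p:=p\sp{*}+tq\in\ZZ\sp{n}$ for integers $t\ge 1$. Writing $\langle p,x\rangle-f(x)=(\langle p\sp{*},x\rangle-f(x))+t\langle q,x\rangle$ and using that the first summand is $\le c_{1}$ for every $x$ while the second is $\le t\gamma$ on $\dom f$, one gets $f\sp{\bullet}(p)\le c_{1}+t\gamma<+\infty$; symmetrically $\Psi\sp{\circ}(p)\ge c_{2}+t\gamma'>-\infty$. Both conjugate values are therefore finite and $\Psi\sp{\circ}(p)-f\sp{\bullet}(p)\ge M+t(\gamma'-\gamma)\ge M+t\delta$, which tends to $+\infty$ as $t\to\infty$, contradicting that $M$ is the maximum. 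Hence $\dom f\cap\dom\Psi\ne\emptyset$, and the minimum in \eqref{minmaxICfnSpfnZZ} is finite.

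The step I expect to be the main obstacle is the middle one. Because $f\sp{\bullet}$ can equal $+\infty$ and $\Psi\sp{\circ}$ can equal $-\infty$ at arbitrary points, one cannot simply push the dual objective to $+\infty$ along a separating direction; the fix is to \emph{anchor} the construction at the maximizer $p\sp{*}$, so that the ``nonlinear parts'' $\langle p\sp{*},x\rangle-f(x)$ and $\langle p\sp{*},y\rangle-\Psi(y)$ stay bounded by $c_{1}$ and $c_{2}$ along the whole ray. The other delicate point is obtaining an \emph{integral} separating direction with a strictly positive gap, which is available exactly because $\overline{\dom f}$ and $\overline{\dom\Psi}$ are rational polyhedra, so that Proposition~\ref{PRcnvextICsep}(2) applies and the nearest-point vector is rational.
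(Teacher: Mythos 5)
Your proof is correct and follows essentially the same route as the paper's: reduce via weak duality to showing $\dom f \cap \dom \Psi \neq \emptyset$, pass to the convex hulls via Proposition~\ref{PRcnvextICsep}(2), separate them by an integral direction $q$ with a positive gap, and shift the dual maximizer $p\sp{*}$ by (multiples of) $q$ to exceed the supposed maximum. The only cosmetic differences are that you obtain the integral normal by a nearest-point/rationality argument on the Minkowski difference of the two (rational polyhedral) hulls, where the paper simply invokes a separating hyperplane with integer normal and gap at least $1$, and that you move along the ray $p\sp{*}+tq$ rather than taking the single step $p\sp{*}+q$.
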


\begin{proof}
Suppose that the maximum,
$\max \{ \Psi\sp{\circ} - f\sp{\bullet} \}$, 
 is finite.
Since 
$\Psi\sp{\circ}(p) - f\sp{\bullet}(p)$
is integer-valued, there exists an integer vector $p\sp{*}$
that attains the maximum.
By the weak duality \eqref{minmaxGenWeak}, this implies that 
the minimum,
$\min \{ f - \Psi  \}$,
 is finite or else $+\infty$.
To prove by contradiction, suppose that the minimum is $+\infty$,
which means 
$\dom f \cap \dom \Psi = \emptyset$.
By Proposition~\ref{PRcnvextICsep} (2), we then have
\[
 \overline{\dom f} \cap \overline{\dom \Psi} = \emptyset .
\]
By the separation theorem (in convex analysis), 
there exists a hyperplane separating 
$\overline{\dom f}$ and $\overline{\dom \Psi}$.
Since $\dom f$ and $\dom \Psi$ are integrally convex sets,
we can take an integer vector as the normal vector to define the 
separating hyperplane.
That is, there exist
$q \in \ZZ\sp{n}$ and $C \in \ZZ$ such that
\begin{align*}
 \langle q, x \rangle &\leq C \quad\;\;\qquad (x \in \dom f), 
\\
 \langle q, x \rangle &\geq C+1 \qquad (x \in \dom \Psi) .
\end{align*}
Then we have
\begin{align*}
 \Psi\sp{\circ}(q+p\sp{*}) 
 &= \min\{ \langle q+p\sp{*}, x \rangle - \Psi(x) \mid x \in \ZZ\sp{n} \} 
\nonumber \\
 &= \min\{ \langle q+p\sp{*}, x \rangle - \Psi(x) \mid x \in \dom \Psi \} 
\nonumber \\
 &\geq (C+1) + \min\{ \langle p\sp{*}, x \rangle - \Psi(x) \mid x \in \dom \Psi \} 
\nonumber \\
 &= (C+1) + \Psi\sp{\circ}(p\sp{*}), 
\\
 f\sp{\bullet}(q+p\sp{*})
 &= \max\{ \langle q+p\sp{*}, x \rangle - f(x) \mid x \in \ZZ\sp{n} \} 
\nonumber \\
 &= \max\{ \langle q+p\sp{*}, x \rangle - f(x) \mid x \in \dom f \}
\nonumber \\
 &\leq C + \max\{ \langle p\sp{*}, x \rangle - f(x) \mid x \in \dom f \} 
\nonumber \\
 &= C + f\sp{\bullet}(p\sp{*}),
\end{align*}
from which we obtain a contradiction
\[
 \Psi\sp{\circ}(q+p\sp{*}) - f\sp{\bullet}(q+p\sp{*}) 
 \geq \Psi\sp{\circ}(p\sp{*}) - f\sp{\bullet}(p\sp{*}) + 1
 > \Psi\sp{\circ}(p\sp{*}) - f\sp{\bullet}(p\sp{*}).
\]
Therefore, 
$\min \{ f - \Psi \}$
must be finite.
\end{proof}

\subsection{Connection to min-max theorems on bisubmodular functions}
\label{SCbisubbox}

Let $N = \{ 1,2,\ldots, n  \}$ and 
denote by $3\sp{N}$ the set of all pairs $(X,Y)$ of disjoint subsets $X, Y$ of $N$,
that is,
$3\sp{N} = \{ (X,Y)  \mid  X, Y \subseteq N, \  X \cap Y = \emptyset \}$.
A function $f: 3\sp{N} \to \RR$ is called {\em bisubmodular} if
\begin{align*} 
& f(X_{1}, Y_{1}) + f(X_{2}, Y_{2}) 
\\ & \geq
 f(X_{1} \cap X_{2}, Y_{1} \cap Y_{2}) +
 f((X_{1} \cup X_{2}) \setminus (Y_{1} \cup Y_{2}), 
  (Y_{1} \cup Y_{2}) \setminus  (X_{1} \cup X_{2}) )
\end{align*} 
holds for all 
$(X_{1}, Y_{1}), (X_{2}, Y_{2})  \in 3\sp{N}$. 
In the following we assume 
$f(\emptyset,\emptyset) =0$.
The associated {\em bisubmodular polyhedron} is defined by
\[
P(f) = \{  z \in \RR\sp{n} \mid  
  z(X) - z(Y) \leq f(X,Y) \ \ \mbox{for all\ } (X,Y) \in 3\sp{N} \},
\]
which, in turn, determines $f$ by
\begin{equation}
f(X,Y) = \max \{ z(X) - z(Y)  \mid z \in P(f) \} 
\qquad ((X,Y) \in 3\sp{N}).
\label{bisubfnfromXY}
\end{equation}
In this section we restrict ourselves to the case of integer-valued $f$,
for which $P(f)$ is an integral polyhedron.	
The reader is referred to \cite[Section 3.5(b)]{Fuj05book} and \cite{Fuj14bisubmdc}
for bisubmodular functions and polyhedra.

In a study of $b$-matching degree-sequence polyhedra,
Cunningham--Green-Kr{\'o}tki \cite{CG91degseq}
obtained a min-max formula for the maximum component sum
$z(N) = \sum_{i \in N} z_{i}$ of $z \in P(f)$ 
upper-bounded by a given vector $w$.

\begin{theorem}[{\cite[Theorem 4.6]{CG91degseq}}] \label{THcungreen}
Let $f: 3\sp{N} \to \ZZ$ be an integer-valued bisubmodular function 
with 
$f(\emptyset,\emptyset) =0$,
and $w \in \ZZ\sp{n}$.
If there exists $z \in P(f)$ with $z \leq w$, 
then
\begin{align} 
&\max\{ z(N) \mid  z \in P(f) \cap \ZZ\sp{n}, \  z \leq w  \}
\nonumber \\ & 
= \min\{ f(X,Y) + w(N \setminus X) + w(Y) 
  \mid  (X, Y) \in 3\sp{N} \}.
\label{minmaxCG}
\end{align} 
\vspace{-1.7\baselineskip}\\
\finbox
\end{theorem}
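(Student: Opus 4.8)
The plan is to derive this min-max formula as a special case of the main result, Theorem~\ref{THminmaxICfnSpfnZZ}, by encoding the bisubmodular polyhedron $P(f)$ through the indicator function of its integer points and encoding the constraint $z \le w$ together with the objective $z(N)$ by a separable concave function. First I would set $S = P(f) \cap \ZZ\sp{n}$ and let $\delta_{S}$ be its indicator function; since $P(f)$ is an integral polyhedron and, more specifically, a box-integer polyhedron (this is the well-known ``box-integrality'' of bisubmodular polyhedra, see \cite[Section 3.5(b)]{Fuj05book}), the function $\delta_{S}$ is an integer-valued integrally convex function by the discussion in Section~\ref{SCintcnvfn} relating integrally convex sets and box-integer polyhedra. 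Next I would introduce the separable concave function
\begin{equation*}
\Psi(z) = \sum_{i \in N} \psi_{i}(z_{i}), \qquad
\psi_{i}(k) =
\begin{cases}
k & (k \le w_{i}), \\
-\infty & (k > w_{i}),
\end{cases}
\end{equation*}
so that $\Psi(z) = z(N)$ when $z \le w$ and $\Psi(z) = -\infty$ otherwise. Then
\begin{equation*}
\min\{ \delta_{S}(z) - \Psi(z) \mid z \in \ZZ\sp{n} \}
= - \max\{ z(N) \mid z \in P(f) \cap \ZZ\sp{n},\ z \le w \},
\end{equation*}
and the hypothesis that some $z \in P(f)$ satisfies $z \le w$ guarantees $\dom \delta_{S} \cap \dom \Psi \ne \emptyset$, so the left-hand side is finite (it is bounded below because $w$ bounds $z$ from above and $P(f)$ bounds the relevant quantities from the bisubmodular inequalities). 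Thus Theorem~\ref{THminmaxICfnSpfnZZ} applies and gives
\begin{equation*}
\min\{ \delta_{S}(z) - \Psi(z) \mid z \in \ZZ\sp{n} \}
= \max\{ \Psi\sp{\circ}(p) - \delta_{S}\sp{\bullet}(p) \mid p \in \ZZ\sp{n} \}.
\end{equation*}

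The remaining work is to compute both conjugates explicitly and recognize the right-hand side as the combinatorial minimum in \eqref{minmaxCG}. For the separable part, a direct computation of $\psi_{i}\sp{\circ}(\ell) = \min\{ k\ell - \psi_{i}(k) \mid k \le w_{i} \}$ shows $\psi_{i}\sp{\circ}(\ell) = w_{i}(\ell - 1)$ if $\ell \ge 1$ and $\psi_{i}\sp{\circ}(\ell) = -\infty$ if $\ell \le 0$ (equivalently, $\psi_{i}\sp{\circ}$ is finite exactly on $\{1, 2, 3, \dots\}$, but in fact only $\ell \in \{0,1\}$ will matter after we intersect with the support of $\delta_{S}\sp{\bullet}$); so the maximizing $p$ has all coordinates in a restricted range. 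For the indicator part, $\delta_{S}\sp{\bullet}(p) = \max\{ \langle p, z \rangle \mid z \in P(f) \cap \ZZ\sp{n} \} = \max\{ \langle p, z \rangle \mid z \in P(f) \}$ by integrality of $P(f)$. The key classical fact I would invoke here is the description of the support function of a bisubmodular polyhedron: for $p \in \ZZ\sp{n}$, writing $p = p\sp{+} - p\sp{-}$ with $p\sp{+}, p\sp{-} \ge 0$ disjointly supported, $\max\{ \langle p, z \rangle \mid z \in P(f) \}$ is finite only when $p \in \{0, 1, -1\}\sp{n}$ (more precisely, when $p$ is $\{0,\pm1\}$-valued along the ``greedy'' directions), and then it equals a weighted sum reducing, at $\{0,\pm1\}$-vectors, to $f(X, Y)$ where $X = \{ i : p_{i} = 1 \}$ and $Y = \{ i : p_{i} = -1 \}$. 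Combining: the only $p$ giving finite value to $\Psi\sp{\circ}(p) - \delta_{S}\sp{\bullet}(p)$ are of the form $p = \mathbf{1}_{N \setminus X} - \mathbf{1}_{Y}$ — i.e. $p_{i} = 0$ on $X$, $p_{i} = 1$ on $N \setminus (X \cup Y)$, $p_{i} = -1$ on $Y$ is not quite it; rather one parametrizes by which coordinates are set to $0$ versus $1$ — and a short bookkeeping identifies $\Psi\sp{\circ}(p) - \delta_{S}\sp{\bullet}(p)$ with $-\big(f(X,Y) + w(N \setminus X) + w(Y)\big)$. Taking the maximum over $p$ then yields $-\min\{ f(X,Y) + w(N\setminus X) + w(Y) \mid (X,Y) \in 3\sp{N} \}$, and negating both sides of the displayed identity gives \eqref{minmaxCG}.

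The main obstacle I anticipate is the explicit evaluation of the support function $\delta_{S}\sp{\bullet}(p) = \max\{\langle p, z\rangle \mid z \in P(f)\}$ and, in particular, pinning down exactly which integer vectors $p$ make the combined objective $\Psi\sp{\circ}(p) - \delta_{S}\sp{\bullet}(p)$ finite and verifying that on those vectors the two terms assemble precisely into $f(X,Y) + w(N\setminus X) + w(Y)$ with the correct signs. This is where the greedy-algorithm characterization of bisubmodular polyhedra does the real work, and one must be careful that the linear-programming dual over $P(f)$ is attained at a $0/\pm 1$ vector; handling the unbounded directions of $P(f)$ (coordinates of $p$ that would force $\delta_{S}\sp{\bullet}(p) = +\infty$) and matching them against the $-\infty$ region of $\Psi\sp{\circ}$ requires a careful case analysis rather than a one-line argument. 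Once that correspondence between dual vectors $p$ and pairs $(X,Y) \in 3\sp{N}$ is established, the rest is formal substitution. I would also remark (as the paper's Section~\ref{SCbisubbox} presumably does) that the companion box-convolution theorem of Fujishige--Patkar \cite{FP94} follows by the same template, replacing the linear objective $z(N)$ in $\Psi$ by a general separable concave function on $z$.
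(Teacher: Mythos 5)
Your plan is the mirror image of the paper's route: the paper does not prove Theorem~\ref{THcungreen} from scratch either, but derives the more general box version (Theorem~\ref{THfujpat}, of which \eqref{minmaxCG} is the case $A=N$, $B=\emptyset$, $\beta=w$) from Theorem~\ref{THminmaxICfnSpfnZZ} by placing Qi's convex extension $\hat f$ of the bisubmodular function on the \emph{primal} side (so that $\hat f\sp{\bullet}=\delta_{P(f)}$ and the polyhedron appears on the dual side), whereas you place $\delta_{P(f)\cap\ZZ\sp{n}}$ on the primal side and push the combinatorial structure to the dual side. That flipped template could in principle work, but note that the integral convexity of $\delta_{P(f)\cap\ZZ\sp{n}}$ is equivalent to $P(f)$ being box-integer, which is essentially the Fujishige--Patkar intersection result -- a legitimate but substantially heavier import than the paper's input (Qi's Lemma~11, which gives integral convexity of $\hat f$ on $\ZZ\sp{n}$ directly).

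The genuine gap is in the dual-side bookkeeping. Your ``key classical fact'' is false: since $f$ is finite on all of $3\sp{N}$, the inequalities for $(\{i\},\emptyset)$ and $(\emptyset,\{i\})$ make $P(f)$ a \emph{bounded} polyhedron, so $\delta_{S}\sp{\bullet}(p)=\max\{\langle p,z\rangle\mid z\in P(f)\}$ is finite for \emph{every} $p\in\ZZ\sp{n}$ (it equals Qi's extension $\hat f(p)$), not only for $\{0,\pm1\}$-valued $p$. Consequently finiteness does not restrict the dual variable to $\{0,\pm1\}\sp{n}$; after the (corrected) conjugate computation -- with $\psi_{i}(k)=k$ on $k\le w_{i}$ one gets $\psi_{i}\sp{\circ}(\ell)=\min\{k(\ell-1)\mid k\le w_{i}\}=w_{i}(\ell-1)$ for $\ell\le 1$ and $-\infty$ for $\ell\ge 2$, i.e.\ the regions in your formula are reversed -- the dual maximum ranges over all integer $p\le\mathbf{1}$, including vectors with arbitrarily negative entries. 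At $p=e_{X}-e_{Y}$ the objective indeed equals $-\bigl(f(X,Y)+w(N\setminus X)+w(Y)\bigr)$, but that only yields the easy inequality (weak duality). The missing step is to show that the maximum over all integer $p\le\mathbf{1}$ is attained at some $\{0,\pm1\}$-valued $p$; this needs a dedicated argument exploiting the positive homogeneity of $\hat f$ (e.g.\ a chain/greedy decomposition of $p$, or an $\ell_{\infty}$-minimal optimal $p\sp{*}$ argument analogous to the one the paper uses to reduce its primal minimization from $\ZZ\sp{n}$ to $\{-1,0,+1\}\sp{n}$), and it is exactly the point your sketch waves through on the strength of the incorrect finiteness claim.
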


The min-max formula \eqref{minmaxCG}
can be extended to a box constraint (with both upper and lower bounds on $z$).
This extension is given in \eqref{FPminmax} below.
Although this formula is not explicit in 
Fujishige--Patkar \cite{FP94},
it can be derived without difficulty from the results of \cite{FP94};
see Remark~\ref{RMminmaxFPder}.

\begin{theorem}[\cite{FP94}] \label{THfujpat}
Let $f: 3\sp{N} \to \ZZ$ be an integer-valued bisubmodular function 
with $f(\emptyset,\emptyset) =0$,
and $\alpha$ and  $\beta$ be integer vectors with $\alpha \leq \beta$.
If there exists $z \in P(f)$ with $\alpha \leq z \leq \beta$,
then, for each $(A,B) \in 3\sp{N}$, we have
\begin{align}
&\max \{ z(A) - z(B)  \mid z \in P(f) \cap \ZZ\sp{n} , \alpha \leq z \leq \beta \}
\nonumber \\ & =
\min \{ f(X,Y) + \beta(A \setminus X) + \beta(Y \setminus B) - \alpha(B \setminus Y) 
    - \alpha(X \setminus A) 
     \mid (X,Y) \in 3\sp{N} \}.
\label{FPminmax}
\end{align}
\vspace{-1.7\baselineskip}\\
\finbox
\end{theorem}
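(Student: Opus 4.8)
The plan is to reduce \eqref{FPminmax} to the box convolution theorem for bisubmodular functions of \cite{FP94}, after first disposing of the easy inequality directly.

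First I would verify weak duality, i.e.\ that the left-hand side of \eqref{FPminmax} is at most the right-hand side. Fix $z\in P(f)$ with $\alpha\le z\le\beta$ and $(X,Y)\in 3\sp{N}$. Splitting $A$, $B$, $X$, $Y$ into the evident blocks gives the identity
\[
 z(A)-z(B)=\bigl(z(X)-z(Y)\bigr)+\bigl(z(A\setminus X)-z(X\setminus A)\bigr)-\bigl(z(B\setminus Y)-z(Y\setminus B)\bigr).
\]
Now bound the three groups separately: $z(X)-z(Y)\le f(X,Y)$ since $z\in P(f)$; $z(A\setminus X)\le\beta(A\setminus X)$ and $z(Y\setminus B)\le\beta(Y\setminus B)$ since $z\le\beta$; and $-z(X\setminus A)\le-\alpha(X\setminus A)$ and $-z(B\setminus Y)\le-\alpha(B\setminus Y)$ since $z\ge\alpha$. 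Adding these yields $z(A)-z(B)\le f(X,Y)+\beta(A\setminus X)+\beta(Y\setminus B)-\alpha(B\setminus Y)-\alpha(X\setminus A)$, which is the ``$\le$'' half of \eqref{FPminmax}; it also shows that, once equality is established, every maximizer $z$ and minimizer $(X,Y)$ is tight in all five bounds.

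For the reverse inequality I would appeal to \cite{FP94}: under the hypothesis $P(f)\cap[\alpha,\beta]_{\RR}\ne\emptyset$, the intersection $P(f)\cap[\alpha,\beta]_{\RR}$ is again a bisubmodular polyhedron $P(f')$, where the box-convolved bisubmodular function $f'$ is given, for $(A,B)\in 3\sp{N}$, by
\[
 f'(A,B)=\min\{\,f(X,Y)+\beta(A\setminus X)+\beta(Y\setminus B)-\alpha(B\setminus Y)-\alpha(X\setminus A)\mid(X,Y)\in 3\sp{N}\,\}.
\]
If \cite{FP94} records the box operation in a different shape --- for instance as reduction by the upper bound $\beta$ followed by reduction by the lower bound $\alpha$, or as a convolution with the bisubmodular function $(A,B)\mapsto\beta(A)-\alpha(B)$ of the box --- the only real task is to massage that description into the displayed closed form, the bookkeeping being carried out in Remark~\ref{RMminmaxFPder}. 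Granting this, the recovery formula \eqref{bisubfnfromXY} applied to $f'$ gives $\max\{z(A)-z(B)\mid z\in P(f)\cap[\alpha,\beta]_{\RR}\}=f'(A,B)$, the continuous version of \eqref{FPminmax}. Finally $f'$ is integer-valued, being a minimum of integer quantities, so $P(f')=P(f)\cap[\alpha,\beta]_{\RR}$ is an integral polyhedron; hence the linear objective $z(A)-z(B)$ attains its maximum over it at an integer point, and the maximum over $P(f)\cap\ZZ\sp{n}\cap[\alpha,\beta]_{\RR}$ equals $f'(A,B)$, which is \eqref{FPminmax}.

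I expect the box convolution step to be the crux: one must exhibit the explicit formula for $f'$ and check that $P(f')=P(f)\cap[\alpha,\beta]_{\RR}$, with $f'$ bisubmodular and $f'(\emptyset,\emptyset)=0$. A self-contained alternative is to write the left-hand side of \eqref{FPminmax} as a linear program over the box-integer polyhedron $P(f)\cap[\alpha,\beta]_{\RR}$, dualize, and uncross: bisubmodularity lets one replace two crossing pairs $(X_1,Y_1),(X_2,Y_2)$ carrying positive dual weight by $(X_1\cap X_2,Y_1\cap Y_2)$ and $((X_1\cup X_2)\setminus(Y_1\cup Y_2),(Y_1\cup Y_2)\setminus(X_1\cup X_2))$ without increasing the objective, after which one argues the optimum collapses to a single pair $(X,Y)\in 3\sp{N}$ with $0$--$1$ box multipliers that reassemble into the four boundary terms above. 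Making that collapse precise is the main obstacle along this route, and it is exactly the combinatorial input that \cite{FP94} already supplies.
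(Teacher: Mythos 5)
Your argument is correct, but it takes a different route from the one the paper features. You prove the easy inequality directly (your block decomposition of $z(A)-z(B)$ is right), and then you import the strong-duality content wholesale from \cite{FP94}: bisubmodularity of the box convolution $f'=f\circ w_{\alpha\beta}$, the identity $P(f')=P(f)\cap[\alpha,\beta]_{\RR}$, the recovery formula \eqref{bisubfnfromXY} applied to $f'$, and integrality of the (bounded, nonempty) polyhedron $P(f')$ to pass from the real maximum to an integer maximizer. This is exactly the derivation the paper itself records in Remark~\ref{RMminmaxFPder}, so it is sound; the only points needing explicit verification, which you do flag, are $f'(\emptyset,\emptyset)=0$ (this uses the hypothesis $P(f)\cap[\alpha,\beta]_{\RR}\neq\emptyset$) and the attainment of the maximum at an integral vertex (finite $\alpha,\beta$ and integer-valued $f'$). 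The paper's own proof in Section~\ref{SCbisubbox} goes the other way around: it deduces \eqref{FPminmax} from the new Fenchel-type theorem (Theorem~\ref{THminmaxICfnSpfnZZ}) by showing that Qi's extension $\hat f$ restricted to $\ZZ\sp{n}$ is an integer-valued integrally convex function, encoding the box and the $(A,B)$-objective in the separable concave function $\Psi$ of \eqref{sepconcaveA}--\eqref{sepconcaveC}, computing $\hat f\sp{\bullet}$ and $\Psi\sp{\circ}$, and arguing separately that the minimization of $\hat f-\Psi$ may be confined to $\{-1,0,+1\}\sp{n}$. What the paper's route buys is precisely the point of that section, namely that the Cunningham--Green-Kr\'otki and Fujishige--Patkar min-max formulas are corollaries of the new duality theorem; your route is shorter but proves nothing beyond what \cite{FP94} already supplies. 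Your closing LP-duality/uncrossing sketch is left incomplete, as you acknowledge, but it is not needed for the argument to stand.
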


Theorem~\ref{THfujpat} can be derived
from our main result (Theorem~\ref{THminmaxICfnSpfnZZ})
as follows.
Let $\hat f$ denote the convex extension of 
the given bisubmodular function $f: 3\sp{N} \to \ZZ$,
as defined by Qi \cite[Eqn (5)]{Qi88}
as a generalization of the Lov{\'a}sz extension of a submodular function.
This function 
$\hat f: \RR\sp{n} \to \RR$ 
is a positively homogeneous convex function and 
it is an extension of $f$ in the sense of 
$\hat f (e_{X}-e_{Y}) = f(X,Y)$
for all $(X,Y) \in 3\sp{N}$,
where, for any $Z \subseteq N$,
$e_{Z}$ denotes the characteristic vector of $Z$.
It follows from the positive homogeneity of $\hat f$ 
and Lemma~11 of \cite{Qi88} that
\[
 \frac{1}{2}\left(\hat f(x) + \hat f(y)\right) \geq 
 \hat f\left(\frac{x+y}{2}\right) 
\qquad (x, y \in \ZZ\sp{n}).
\]
This implies, by Theorem~\ref{THfavtarProp33},
that the function $\hat f$ restricted to $\ZZ\sp{n}$
is an integrally convex function.
In the following we denote
the restriction of $\hat f$ to $\ZZ\sp{n}$
also by $\hat f$.
This function is an integer-valued integrally convex function.

Fix $(A,B) \in 3\sp{N}$ and let
$C = N \setminus (A \cup B)$.
We define a separable concave function 
$\Psi(x) = \sum_{i \in N} \psi_{i}(x_{i})$ 
with $\psi_{i}: \ZZ \to \ZZ$ as follows:
For $i \in A$,
\begin{equation}\label{sepconcaveA}
 \psi_{i}(k) =
\begin{cases} 
 \alpha_{i}(k-1)  &  (k \geq 1),  
\\ 
 \beta_{i}(k-1) & (k \leq 1);
\end{cases}
\end{equation}
For $i \in B$,
\begin{equation}\label{sepconcaveB}
 \psi_{i}(k) =
\begin{cases} 
 \alpha_{i}(k+1)  &  (k \geq -1),  
\\ 
 \beta_{i}(k+1) & (k \leq -1);
\end{cases}
\end{equation}
For $i \in C$,
\begin{equation}\label{sepconcaveC}
 \psi_{i}(k) =
\begin{cases} 
 \alpha_{i} k  &  (k \geq 0),  
\\ 
 \beta_{i} k & (k \leq 0) .
\end{cases} 
\end{equation}

We intend to apply
Theorem~\ref{THminmaxICfnSpfnZZ}
to the integer-valued integrally convex function $\hat f$
and the integer-valued separable concave function $\Psi$.
For these functions the min-max formula \eqref{minmaxICfnSpfnZZ} reads
\begin{align} 
 \min \{ \hat f(x) - \Psi(x) \mid  x \in \ZZ\sp{n}  \} 
= \max \{ \Psi\sp{\circ}(p) - \hat f\sp{\bullet}(p) \mid  p \in \ZZ\sp{n} \} .
\label{minmaxBisub} 
\end{align}

On identifying a vector $x \in \{-1, 0, +1 \}\sp{n}$
with $(X,Y) \in 3\sp{N}$ by $x = e_{X}-e_{Y}$,
we have 
\[
 -\Psi(x) = \beta(A \setminus X) + \beta(Y \setminus B) - \alpha(B \setminus Y) 
    - \alpha(X \setminus A) ,
\]
which is easily verified from \eqref{sepconcaveA}--\eqref{sepconcaveC}.
Hence we have
\begin{equation}\label{RHS-FPminmax-fPsi}
 \min \{ \hat f(x) - \Psi(x) \mid x \in \{-1, 0, +1 \}\sp{n} \}
 = \mbox{RHS of \eqref{FPminmax}} .
\end{equation}
As is shown later, the minimization over $\{-1, 0, +1 \}\sp{n}$ 
here can be replaced by that over $\ZZ\sp{n}$, that is, 
\begin{equation}\label{min01=minZ}
 \min \{ \hat f(x) - \Psi(x) \mid x \in \{-1, 0, +1 \}\sp{n}\} 
  = \min \{ \hat f(x) - \Psi(x) \mid x \in \ZZ\sp{n}\}.
\end{equation}
Then, we 
\begin{equation}\label{RHS-FPminmax-fPsiZ}
 \mbox{LHS of \eqref{minmaxBisub}} 
 = \mbox{RHS of \eqref{FPminmax}} .
\end{equation}

Next we turn to the maximization on the right-hand side of \eqref{minmaxBisub}. 
The conjugate function 
$\hat f\sp{\bullet}$ is equal to 
the indicator function of $P(f)$.
That is,
$\hat f\sp{\bullet}(p)$
is equal to $0$ if $p \in P(f)$, and to $+\infty$ otherwise.
The concave conjugate
$\Psi\sp{\circ}$ 
is given by
\[
 \Psi\sp{\circ}(p) =
\begin{cases} 
 p(A)-p(B)  &  (\alpha \leq p \leq \beta),  
\\ 
 -\infty & (\mbox{\rm otherwise})
\end{cases} 
\]
for $p \in \ZZ\sp{n}$.
Indeed, for $i \in A$ we obtain
\[
 \psi\sp{\circ}_{i}(\ell) =
\begin{cases} 
 \ell  &  (\alpha_{i} \leq \ell \leq \beta_{i}),  
\\ 
 -\infty & (\mbox{\rm otherwise}) 
\end{cases}
\]
from \eqref{sepconcaveA};
for $i \in B$ we obtain
\[
 \psi\sp{\circ}_{i}(\ell) =
\begin{cases} 
 -\ell  &  (\alpha_{i} \leq \ell \leq \beta_{i}),  
\\ 
 -\infty & (\mbox{\rm otherwise}) 
\end{cases} 
\]
from \eqref{sepconcaveB};
for $i \in C$ we obtain
\[
 \psi\sp{\circ}_{i}(\ell) =
\begin{cases} 
 0  &  (\alpha_{i} \leq \ell \leq \beta_{i}),  
\\ 
 -\infty & (\mbox{\rm otherwise})
\end{cases} 
\]
from \eqref{sepconcaveC}.
Therefore, 
for the right-hand side of \eqref{minmaxBisub},
we have
\begin{align}
 & \mbox{RHS of \eqref{minmaxBisub}} 
= \max \{ \Psi\sp{\circ}(p) - \hat f\sp{\bullet}(p) \mid  p \in \ZZ\sp{n} \} 
\nonumber \\
 &= \max\{ p(A) - p(B) \mid p \in P(f) \cap \ZZ\sp{n}, \alpha \leq p \leq \beta\} 
 = \mbox{LHS of \eqref{FPminmax}} ,
\label{BsubminmaxICfnSpfn} 
\end{align}
where the variable $p$ corresponds to $z$ in \eqref{FPminmax}.
The combination of 
\eqref{BsubminmaxICfnSpfn}
with \eqref{minmaxBisub} and 
\eqref{RHS-FPminmax-fPsiZ}
implies the min-max relation in \eqref{FPminmax}.
It is added that the assumption of Theorem~\ref{THminmaxICfnSpfnZZ} is met,
since the value of \eqref{BsubminmaxICfnSpfn} is finite.

It remains to show \eqref{min01=minZ}.
Let $\hat x \in \ZZ\sp{n}$ be a minimizer on the right-hand side of \eqref{min01=minZ}
with $\| \hat x \|_{\infty}$ minimum.
To prove by contradiction, assume  
$\| \hat x \|_{\infty} \geq 2$.
Define
$U,V \subseteq N$ by 
\[
 U = \{ i \in N \mid \hat x_{i} = \| \hat x \|_{\infty}\}, \quad
 V = \{ i \in N \mid \hat x_{i} = -\| \hat x \|_{\infty}\},
\]
and let $d = e_{U} - e_{V}$.
By \eqref{sepconcaveA}--\eqref{sepconcaveC},
each $\psi_{i}$ is a linear (affine) function 
on each of the intervals $(-\infty,-1]$ and $[+1,+\infty)$.
Combining this with the fundamental property of 
the Lov{\'a}sz extension $\hat f$,
we see that there exists a vector $q \in \ZZ\sp{n}$ for which
\[
 (\hat f - \Psi)(\hat x \pm d) 
=  (\hat f - \Psi)(\hat x) \pm (f(U,V) + \langle q, d \rangle) 
\] 
holds, where the double-sign corresponds.
Since 
$\hat x$ is a minimizer of $\hat f - \Psi$,
we must have $f(U,V) + \langle q, d \rangle = 0$.
This implies, however, that
$\hat x - d$ is also a minimizer of $\hat f - \Psi$,
whereas we have $\| \hat x - d\|_{\infty} < \|\hat x\|_{\infty}$, a contradiction.
We have thus completed the derivation of 
Theorem~\ref{THfujpat} from Theorem~\ref{THminmaxICfnSpfnZZ}.

\begin{remark} \rm \label{RMminmaxFPder}
The min-max formula \eqref{FPminmax}
can be derived from the results of \cite{FP94} as follows.
Given $\alpha, \beta \in \ZZ\sp{n}$ with $\alpha \leq \beta$,
we can consider a bisubmodular function
$w_{\alpha \beta}$ defined by
\[
 w_{\alpha \beta}(X,Y) = \beta(X) - \alpha(Y)
\]
for disjoint subsets $X$ and $Y$.
The convolution of $f$ with $w = w_{\alpha \beta}$ is defined (and denoted) as 
\begin{align}
&
(f \circ w)(A,B) 
\nonumber \\ & =
\min \{ f(X,Y) + w(A \setminus X, B \setminus Y) + w(Y \setminus B,X \setminus A) 
      \mid (X,Y) \in 3\sp{N} \}
\nonumber \\ & =
\min \{ f(X,Y) + \beta(A \setminus X) + \beta(Y \setminus B) 
   - \alpha(B \setminus Y) - \alpha(X \setminus A) 
     \mid (X,Y) \in 3\sp{N} \}.
\label{fwdef}
\end{align}
This function is bisubmodular \cite[Theorem 3.2]{FP94}.
By \eqref{bisubfnfromXY} applied to $f \circ w$,
we obtain 
\begin{equation}
(f \circ w)(A,B) = \max \{ z(A) - z(B)  \mid z \in P(f \circ w) \} 
\qquad ((A,B) \in 3\sp{N}).
\label{convfw}
\end{equation}
On the other hand, Theorem 3.3 of \cite{FP94} shows
\begin{equation}
P(f \circ w) = P(f) \cap P(w) 
= \{  z \mid z \in P(f), \alpha \leq z \leq \beta \} .
\label{PfwPfPw}
\end{equation}
By substituting this expression 
into $P(f \circ w)$ on the right-hand side of \eqref{convfw}
we obtain
\begin{align}
(f \circ w)(A,B) 
&= 
\max \{ z(A) - z(B)  \mid z \in P(f \circ w) \} 
\nonumber \\
&=\max \{ z(A) - z(B)  \mid z \in P(f) \cap P(w) \} 
\nonumber \\
&=\max \{ z(A) - z(B)  \mid z \in P(f), \alpha \leq z \leq \beta \}.
\label{fwXYmax}
\end{align}
The combination of \eqref{fwdef} and \eqref{fwXYmax}
gives the desired equality \eqref{FPminmax}.

Finally we mention that the paper \cite{FP94}
considers a more general setting where 
$f$ is a bisubmodular function 
defined on a subset $\mathcal{F}$ of $3\sp{N}$ such that 
\begin{align*} 
& (X_{1}, Y_{1}), (X_{2}, Y_{2})  \in \mathcal{F} 
\ \Longrightarrow \ 
 (X_{1} \cap X_{2}, Y_{1} \cap Y_{2}) \in \mathcal{F},
\\ &
 (X_{1}, Y_{1}), (X_{2}, Y_{2})  \in \mathcal{F} 
\ \Longrightarrow \ 
 ((X_{1} \cup X_{2}) \setminus (Y_{1} \cup Y_{2}), 
  (Y_{1} \cup Y_{2}) \setminus (X_{1} \cup X_{2}) )
\in \mathcal{F}.
\end{align*} 
The min-max formula \eqref{FPminmax} remains true in this general case.
\finbox 
\end{remark}


\section{Integral Subgradients}
\label{SCsubrZ}

\subsection{Results}
\label{SCresult}

In this section we are interested in integral subgradients of
an integer-valued integrally convex function
$f: \ZZ\sp{n} \to \ZZ \cup \{ +\infty \}$.
Recall the definition of
the subdifferential 
\begin{equation} \label{subgZRdef0}
 \subgR f(x)
= \{ p \in  \RR\sp{n} \mid    
  f(y) - f(x)  \geq  \langle p, y - x \rangle   \ \ \mbox{\rm for all }  y \in \ZZ\sp{n} \}
\end{equation}
at $x \in \dom f$.
An integer vector $p$ belonging to $\subgR f(x)$, if any,
is called an integral subgradient of $f$ at $x$.

Integral subdifferentiability
of integer-valued integrally convex functions
is recently established by
Murota--Tamura \cite{MT20subgrIC}.

\begin{theorem}[\cite{MT20subgrIC}]  \label{THsubgrIC}
Let $f: \ZZ\sp{n} \to \ZZ \cup \{ +\infty \}$
be an integer-valued integrally convex function.
For every $x \in \dom f$,
we have
$\subgR f(x) \cap \ZZ\sp{n} \neq \emptyset$.
\finbox
\end{theorem}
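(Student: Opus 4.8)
The plan is to rewrite $\subgR f(x)$ as an explicit rational polyhedron and then produce an integer point in it by induction on $n$, using Fourier--Motzkin elimination to peel off one coordinate at a time. Since $f-\langle p,\cdot\rangle$ is integrally convex for every $p\in\RR\sp{n}$, Theorem~\ref{THintcnvlocopt} (local optimality of integrally convex functions) gives
\[
 \subgR f(x) \;=\; P \;:=\; \bigl\{\, p\in\RR\sp{n} \;\bigm|\; \langle p,d\rangle \le f(x+d)-f(x) \ \text{ for all } d\in\{-1,0,1\}\sp{n} \,\bigr\},
\]
a polyhedron defined by at most $3\sp{n}$ inequalities with integer right-hand sides (those with $x+d\notin\dom f$ being vacuous). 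It therefore suffices to prove $P\cap\ZZ\sp{n}\neq\emptyset$, which I would do by induction on $n$.

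For $n=1$ the set $P$ is an interval whose endpoints $f(x)-f(x-1)$ and $f(x+1)-f(x)$ are integers (or $\pm\infty$), with the left not exceeding the right by discrete convexity, so $P$ contains an integer. For the inductive step I would eliminate $p_n$ from $P$ by Fourier--Motzkin. Writing $x'=(x_1,\dots,x_{n-1})$ and letting $g(y'):=f(y',x_n)$ be the restriction of $f$ to the coordinate hyperplane through $x$ — an integer-valued integrally convex function on $\ZZ\sp{n-1}$ — the inequalities of $P$ with $d_n=0$ survive unchanged and are, by Theorem~\ref{THintcnvlocopt} applied to $g$, precisely the defining inequalities of $\subgR g(x')$. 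Every other inequality of the eliminated system comes from combining an inequality for some $d\sp{+}$ with $d\sp{+}_n=+1$ and one for some $d\sp{-}$ with $d\sp{-}_n=-1$, and reads $\langle e,p'\rangle \le (f(x+d\sp{+})-f(x))+(f(x+d\sp{-})-f(x))$ with coefficient vector $e:=(d\sp{+})'+(d\sp{-})'\in\{-2,-1,0,1,2\}\sp{n-1}$.

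The heart of the matter is to show that these ``$\pm2$-coefficient'' inequalities are redundant. Because $\|d\sp{+}-d\sp{-}\|_\infty\ge|d\sp{+}_n-d\sp{-}_n|=2$, Theorem~\ref{THfavtarProp33}(b) gives $\tilde f(z)\le\tfrac12\bigl(f(x+d\sp{+})+f(x+d\sp{-})\bigr)$ for the midpoint $z=x+\tfrac12(d\sp{+}+d\sp{-})$; expressing $\tilde f(z)=\sum_w\lambda_w f(w)$ as an optimal convex combination over $N(z)$, one checks from the explicit form of $z$ that every $w$ with $\lambda_w>0$ satisfies $w_n=x_n$ and $\|w-x\|_\infty\le1$, i.e. $w=x+(\delta,0)$ with $\delta\in\{-1,0,1\}\sp{n-1}$. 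Plugging the $d_n=0$ inequalities $\langle\delta,p'\rangle\le f(x+(\delta,0))-f(x)$ into this convex combination and using $e=2\sum_w\lambda_w\delta$ yields exactly the $\pm2$-coefficient inequality, so it is implied by the $d_n=0$ ones. Hence the Fourier--Motzkin projection of $P$ onto $\RR\sp{n-1}$ equals $\subgR g(x')$. By the inductive hypothesis there is an integer $q\in\subgR g(x')$, which therefore lies in the projection of $P$; the fibre $\{p_n\in\RR\mid (q,p_n)\in P\}$ is then a nonempty interval whose endpoints $\max_{d_n=-1}\bigl(\langle d',q\rangle-f(x+d)+f(x)\bigr)$ and $\min_{d_n=+1}\bigl(f(x+d)-f(x)-\langle d',q\rangle\bigr)$ are integers, so it contains an integer $p_n\sp{*}$, giving $(q,p_n\sp{*})\in P\cap\ZZ\sp{n}$. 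I expect this redundancy step to be the main obstacle: without integral convexity the projection acquires genuinely new $\pm2$-coefficient facets and integrality is lost — which is also why the box-constrained strengthening (Theorem~\ref{THICsubgrBox}, of which the present statement is the case $B=\RR\sp{n}$) needs a more delicate version of the same elimination.
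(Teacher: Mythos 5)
Your proposal is correct and takes essentially the same route as the paper (which follows \cite{MT20subgrIC}): describe $\subgR f(x)$ by the local inequalities supplied by Theorem~\ref{THintcnvlocopt}, perform Fourier--Motzkin elimination, use the midpoint inequality of Theorem~\ref{THfavtarProp33}(b) to exhibit each newly generated inequality as a convex combination of inequalities already present (the same mechanism as the paper's Lemma~\ref{LMcnvcombi}), and then pick integer coordinates one at a time from the resulting bounds with integral endpoints. Your induction on $n$ via the restriction of $f$ to the hyperplane $x_n$ fixed is just a repackaging of the paper's hierarchical system \eqref{FMp12n} and its backward substitution, with the coordinates eliminated in the opposite order.
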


The main technical result of the present paper
(Theorem~\ref{THICsubgrBox})
is a strengthening of this theorem
with an additional box condition.
Recall that an integral box means a set $B$ of real vectors represented as
\[
  B = \{ p \in \RR\sp{n} \mid \alpha \leq p \leq \beta \}
\]
with two integer vectors 
$\alpha \in (\ZZ \cup \{ -\infty \})\sp{n}$ and 
$\beta \in (\ZZ \cup \{ +\infty \})\sp{n}$ satisfying $\alpha \leq \beta$.
For convenience, we present the theorem again.
The proof is given in Sections \ref{SCfoumotzP}--\ref{SCfmPBproof}.

\medskip
\noindent
{\bf Theorem~\ref{THICsubgrBox}.} (Main technical result, again) \ 
\textit{
Let $f: \ZZ\sp{n} \to \ZZ \cup \{ +\infty \}$
be an integer-valued integrally convex function, $x \in \dom f$,
and $B$ be an integral box.
If $\subgR f(x) \cap B$ is nonempty,
then $\subgR f(x) \cap B$ 
is a polyhedron containing an integer vector.
If, in addition, $\subgR f(x) \cap B$ is bounded,
then $\subgR f(x) \cap B$ has an integral vertex.
}
\finbox

\medskip

As an immediate corollary, we can obtain the following statement.

\begin{corollary}  \label{COsubgrICp}
Let $f: \ZZ\sp{n} \to \ZZ \cup \{ +\infty \}$
be an integer-valued integrally convex function, $x \in \dom f$,
and $p \in \subgR f(x)$.
Then there exists 
an integer vector $q \in \subgR f(x)$
satisfying 
$\lfloor p_{i} \rfloor \leq  q_{i} \leq \lceil p_{i} \rceil$  $(i=1,2,\ldots, n)$.
\end{corollary}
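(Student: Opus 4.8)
The plan is to derive Corollary~\ref{COsubgrICp} directly from Theorem~\ref{THICsubgrBox} by choosing the integral box appropriately. Given $p \in \subgR f(x)$, define the integer vectors $\alpha, \beta \in \ZZ\sp{n}$ by $\alpha_{i} = \lfloor p_{i} \rfloor$ and $\beta_{i} = \lceil p_{i} \rceil$ for $i = 1, 2, \ldots, n$. Since $\lfloor p_{i} \rfloor \leq p_{i} \leq \lceil p_{i} \rceil$ we have $\alpha \leq \beta$, so $B := [\alpha, \beta]_{\RR} = \{ q \in \RR\sp{n} \mid \alpha \leq q \leq \beta \}$ is a (bounded) integral box. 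Moreover $p$ itself lies in $B$, so $p \in \subgR f(x) \cap B$, which shows $\subgR f(x) \cap B \neq \emptyset$.

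Now apply Theorem~\ref{THICsubgrBox} with this box $B$. Since $B$ is bounded, the set $\subgR f(x) \cap B$ is bounded, so the theorem guarantees that $\subgR f(x) \cap B$ has an integral vertex; in particular $\subgR f(x) \cap B \cap \ZZ\sp{n} \neq \emptyset$. Let $q$ be any integer vector in this intersection. Then $q \in \subgR f(x)$, and $q \in B$ means exactly $\lfloor p_{i} \rfloor = \alpha_{i} \leq q_{i} \leq \beta_{i} = \lceil p_{i} \rceil$ for every $i$, which is the asserted conclusion.

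There is essentially no obstacle here: all the work has been done in Theorem~\ref{THICsubgrBox}, and the corollary is just the specialization to the coordinate-wise rounding box of $p$. The only point worth stating carefully is that $\lfloor p_{i} \rfloor$ and $\lceil p_{i} \rceil$ are always finite integers (no $\pm\infty$ entries arise), so $B$ is genuinely bounded and the stronger ``integral vertex'' conclusion of Theorem~\ref{THICsubgrBox} applies; it is this boundedness that makes the weaker first half of the theorem unnecessary for the corollary. One could equally invoke only the implication~\eqref{ICsubgrboxZ00} to extract the integer point, without mentioning vertices at all.
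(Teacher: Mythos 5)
Your proposal is correct and coincides with the paper's own proof: take the box $B=[\alpha,\beta]_{\RR}$ with $\alpha_{i}=\lfloor p_{i}\rfloor$, $\beta_{i}=\lceil p_{i}\rceil$, note $p\in\subgR f(x)\cap B\neq\emptyset$, and apply Theorem~\ref{THICsubgrBox} to obtain an integer vector in $\subgR f(x)\cap B$. Your closing remark is right as well --- the paper uses only the first (nonemptiness) part of the theorem, so the detour through boundedness and integral vertices is unnecessary but harmless.
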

\begin{proof}
Let $B=[\alpha,\beta]_{\RR}$ be the integral box defined by 
$\alpha_{i}=\lfloor p_{i} \rfloor$
and $\beta_{i} = \lceil p_{i} \rceil$ for $i=1,2,\ldots, n$.
Then $p \in \subgR f(x) \cap B$, which shows
$\subgR f(x) \cap B \neq \emptyset$.
By Theorem~\ref{THICsubgrBox}, there exists 
$q \in \subgR f(x) \cap B \cap \ZZ\sp{n}$, 
which is an integer vector $q$ in $\subgR f(x)$
satisfying 
$\lfloor p_{i} \rfloor \leq  q_{i} \leq \lceil p_{i} \rceil$  $(i=1,2,\ldots, n)$.
\end{proof}


In the following we make supplementary remarks
concerning the contents of Theorems \ref{THICsubgrBox} and \ref{THsubgrIC}.

\begin{remark} \rm \label{RMsubg}
Integral subdifferentiability is not guaranteed
without the assumption of integral convexity,
as the following example
\cite[Example 1.1]{Mdca98} shows.

Let 
$D = \{ (0,0,0), \pm (1,1,0), \pm (0,1,1), \pm (1,0,1) \}$
and $f: \mathbb{Z}^3 \to \mathbb{Z} \cup \{+\infty\}$ be defined by
\begin{align*}
f(x_{1},x_{2},x_{3}) =
 \begin{cases} (x_{1}+x_{2}+x_{3})/2 & (x \in D), \\
                +\infty & (\textrm{otherwise}).  
  \end{cases}
\end{align*}
This function can be naturally extended to a convex function on 
the convex hull $\overline{D}$ of $D$
and $D$ is hole-free in the sense of
$D =  \overline{D} \cap \mathbb{Z}^{3}$.
However, $D$ is not integrally convex since
for $x=(1,1,0)$ and $y=(-1,0,-1)$ we have 
$(x+y)/2 = (0, 1/2, -1/2)$,
$N((0, 1/2, -1/2)) = \{ (0,0,0), (0,1,0), (0,0,-1), (0,1,-1)  \}$,
and hence
$N((0, 1/2, -1/2)) \cap D    \allowbreak   = \{ (0,0,0) \}$.
Therefore, $f$ is not integrally convex. 

The subdifferential of $f$ at the origin $\veczero=(0,0,0)$
consists of real vectors $p$ that satisfy the inequality
$f(y) - f(\veczero) \ge \langle p, y \rangle$ for all $y \in D$.
This condition is given by
\begin{center}
\begin{tabular}{ccc}
$ 1 \ge  p_{1} + p_{2}$, & $ 1 \ge  p_{2} + p_{3}$, & $ 1 \ge  p_{3} + p_{1}$, \\
$-1 \ge -p_{1} - p_{2}$, & $-1 \ge -p_{2} - p_{3}$, & $-1 \ge -p_{3} - p_{1}$.
\end{tabular}
\end{center}
This system of inequalities has a unique solution 
$(p_{1}, p_{2}, p_{3}) = (1/2, 1/2, 1/2)$,
which means that
$\subgR f(\veczero) = \{ (1/2, 1/2, 1/2) \}$
and 
$\subgR f(\veczero) \cap \ZZ\sp{3} = \emptyset$.
\finbox
\end{remark}

\begin{remark} \rm \label{RMsubgNotIntPolyh}
Theorem~\ref{THsubgrIC} states that 
$\subgR f(x) \cap \ZZ\sp{n} \neq \emptyset$,
but it does not claim a stronger statement that $\subgR f(x)$ is an integer polyhedron.
Indeed, $\subgR f(x)$ is not necessarily an integer polyhedron, 
as the following example \cite[Remark 3.1]{MT20subgrIC} shows.

Let 
$f: \mathbb{Z}^3 \to \mathbb{Z} \cup \{+\infty\}$ be defined by
$f(0,0,0)=0$ and $f(1,1,0)=f(0,1,1)=f(1,0,1)=1$,
with $\dom f = \{ (0,0,0), (1,1,0), (0,1,1), (1,0,1) \}$.
This function is integrally convex
 and the subdifferential of $f$ at the origin is given as
\[
\subgR f(\veczero) = \{ p \in \RR\sp{3} \mid 
 p_{1} + p_{2} \leq 1,  
 p_{2} + p_{3} \leq 1,  
 p_{1} + p_{3} \leq 1   \},
\]
which is not an integer polyhedron, having a non-integral vertex at $p=(1/2, 1/2, 1/2)$.
Nevertheless, 
$\subgR f(\veczero) \cap B$
contains an integer vector 
for every integral box $B$,
which is the claim of Theorem~\ref{THICsubgrBox}.
\finbox
\end{remark}

\begin{remark} \rm \label{BoundedSG}
It is pointed out in \cite[Remark 4.1]{MT20subgrIC}
that, if $\subgR f(x)$ is a bounded polyhedron for 
an integer-valued integrally convex function $f$,
then $\subgR f(x)$ has an integral vertex,
although not every vertex of $\subgR f(x)$ is integral.
A concrete example is given here.

Let 
$ D = \{ x \in \{ -1,0,+1 \}\sp{3}  \mid  |x_{1}| + |x_{2}| + |x_{3}| \leq 2 \}$,
which is an integrally convex set.
Define $f$ on $D$ by
$f(\veczero) = 0$ and
$f(x) = 1$   $(x \in D \setminus \{ \veczero \})$.
This $f$ is an integer-valued integrally convex function
and $\subgR f(\veczero)$ is a bounded polyhedron described by the following inequalities:
\begin{align*}
& p_1 \pm p_2          \leq 1, 
\quad
 -p_1 \pm p_2          \leq 1,
\quad
  p_1         \pm p_3  \leq 1,
\quad
 -p_1         \pm p_3  \leq 1,
\\
&          p_2 \pm p_3  \leq 1,
\quad
         -p_2 \pm p_3  \leq 1,
\qquad 
\pm p_1         \leq 1,
\quad
      \pm p_2          \leq 1,
\quad
              \pm p_3  \leq 1 .
\end{align*}
The polyhedron $\subgR f(\veczero)$ has eight non-integral vertices
$(\pm 1/2, \pm 1/2, \pm 1/2)$
(with arbitrary combinations of double signs)
and six integral vertices
$(\pm 1, 0, 0)$,
$(0, \pm 1, 0)$, and
$(0,0, \pm 1)$.
\finbox
\end{remark}

It is in order here to briefly mention how 
Theorem~\ref{THICsubgrBox} is proved
in the remainder of this section.
Let $P := \subgR f(x)$ for notational simplicity,
and for each $\ell = 1,2,\ldots,n$,
let $[P]_{\ell}$ denote
the projection of $P$
to the space of $(p_{\ell},p_{\ell+1},\ldots,p_n)$.
Since $P$ is a polyhedron,
each $[P]_{\ell}$ is also a polyhedron.
In \cite{MT20subgrIC}, a 
hierarchical system of inequalities 
to describe $[P]_{\ell}$ for $\ell = 1,2,\ldots,n$
(Theorem~\ref{THelimICProj} in Section \ref{SCfoumotzP})
was derived by means of the Fourier--Motzkin elimination
and then Theorem~\ref{THsubgrIC} was proved as a fairly easy consequence 
of this description.
We extend this approach to prove Theorem~\ref{THICsubgrBox}.
Namely, in Theorem~\ref{THelimICboxProj} in Section \ref{SCfmPBgen},
we derive a hierarchical system of inequalities 
to describe 
the projection 
$[P \cap B]_{\ell}$
of $P \cap B$
to the space of $(p_{\ell},p_{\ell+1},\ldots,p_n)$
for $\ell = 1,2,\ldots,n$.
Here again we rely on the Fourier--Motzkin elimination.

\subsection{Fourier--Motzkin elimination for the subdifferential $\subgR f(x)$}
\label{SCfoumotzP}

In this section we briefly
describe the approach of 
\cite{MT20subgrIC}
to prove Theorem~\ref{THsubgrIC} 
(integral subdifferentiability)
with the aid of the Fourier--Motzkin elimination.
In so doing we intend to clarify the geometrical essence 
of the argument in \cite{MT20subgrIC}
in a form convenient for the proof of 
Theorem~\ref{THICsubgrBox}
(integral subdifferentiability with a box).
Recall our notation $P$ for the subdifferential $\subgR f(x)$
(for a fixed $x \in \dom f$)
and $[P]_{\ell}$ 
for the projection of $P$
to the space of $(p_{\ell},p_{\ell+1},\ldots,p_n)$,
where $\ell = 1,2,\ldots,n$.

To obtain an expression of $\subgR f(x)$,
we can make use of Theorem~\ref{THintcnvlocopt}
for the minimality of an integrally convex function.
Namely, in the definition of  $\subgR f(x)$ in (\ref{subgZRdef0}),
it suffices to consider  $y=x + d$ with $d \in \{-1,0,+1\}^n$,
and then
\begin{equation} \label{subgZR0def}
 \subgR f(x)
= \{ p \in  \RR\sp{n} \mid    
  \sum_{j=1}\sp{n} d_{j} p_{j} \leq f(x+d) - f(x) 
 \ \ \mbox{for all} \ \  d \in \{ -1,0,+1 \}\sp{n} \} .
\end{equation}
We represent the system of inequalities 
$\sum_{j=1}\sp{n} d_{j} p_{j} \leq f(x+d) - f(x) $ 
for $d$ with $f(x+d) < +\infty$
in a matrix form as
\begin{equation}\label{ineqApb}
 A p \leq b.
\end{equation}
Let $I$ denote the row set of $A$ and 
\[
A = ( a_{ij} \mid i \in I, j \in \{ 1,2,\ldots, n \}).
\]
We denote the $i$th row vector of $A$ by $a_{i}$ for $i \in I$.
The row set $I$ is 
indexed by $d \in \{ -1,0,+1 \}\sp{n}$ with $f(x+d) < +\infty$,
and $a_{i}$ is equal to the corresponding $d$ for $i \in I$;
we have $a_{ij} = d_{j}$ for $j=1,2,\ldots, n$ and 
\[
b_{i}= f(x+a_{i})-f(x).
\]
Note that $a_{ij} \in \{ -1,0,+1 \}$
and $a_{i} \in \{ -1,0,+1 \}\sp{n}$ for all $i$ and $j$.

An inequality system to describe the projections
$[P]_{\ell}$ for $\ell = 1,2,\ldots,n$
can be obtained 
by applying the Fourier--Motzkin elimination procedure \cite{Sch86} to 
the system of inequalities (\ref{ineqApb}),
where
the variable $p_{1}$ is eliminated first, and then $p_{2}, p_{3}, \ldots$,
to finally obtain an inequality in $p_{n}$ only.

By virtue of the integral convexity of $f$, a drastic simplification occurs
in this elimination process. 
The inequalities that are generated  
are actually redundant and need not be added
to the current system of inequalities,
which is a crucial observation made in \cite{MT20subgrIC}.
It is shown in \cite{MT20subgrIC}
that the Fourier--Motzkin elimination procedure
applied to \eqref{ineqApb} 
results in a system of inequalities
that is equivalent to \eqref{FMp12n} below:
\begin{align}
 \max_{k \in \hat I_{1}^{-}} \left\{ \sum_{j=2}^n a_{kj}p_{j} - b_{k} \right\} 
  \leq & \ p_{1} \leq  
 \min_{i \in \hat I_{1}^{+}} \left\{ b_{i} - \sum_{j=2}^n a_{ij}p_{j}  \right\} ,
\nonumber \\
 \max_{k \in \hat I_{2}^{-}} \left\{ \sum_{j=3}^n a_{kj}p_{j} - b_{k} \right\} 
  \leq & \  p_{2} \leq  
 \min_{i \in \hat I_{2}^{+}} \left\{ b_{i} - \sum_{j=3}^n a_{ij}p_{j}  \right\} ,
\nonumber \\
  & \ \vdots 
\label{FMp12n}
\\
 \max_{k \in \hat I_{n-1}^{-}} \left\{ a_{kn}p_{n} - b_{k} \right\} 
  \leq & \  p_{n-1} \leq  
 \min_{i \in \hat I_{n-1}^{+}} \left\{ b_{i} - a_{in}p_{n}  \right\} ,
\nonumber \\
 \max_{k \in \hat I_{n}^{-}} \left\{ - b_{k} \right\} 
  \leq & \ p_{n} \leq  
 \min_{i \in \hat I_{n}^{+}} \left\{ b_{i} \right\} .
\nonumber 
\end{align}
Here the index sets are defined as follows:
Let $\hat I_{0}^{0} = I$ and for  $j=1, 2 ,\ldots,n$, define
\begin{align}
 \hat I_{j}^{+} &= \{ i \in \hat I_{j-1}^{0} \mid a_{ij} = +1 \},   
\nonumber \\
 \hat I_{j}^{-} &= \{ i \in \hat I_{j-1}^{0} \mid a_{ij} = -1 \},
\label{indIjdef} \\
 \hat I_{j}^{0} &= \{ i \in \hat I_{j-1}^{0} \mid a_{ij} = 0 \},   
\nonumber 
\end{align}
where $\hat I_{j}^{+}$ and/or $\hat I_{j}^{-}$ may possibly be empty. 
This result can be rephrased in terms of projections as follows.

\begin{theorem}[\cite{MT20subgrIC}] \label{THelimICProj}
Let $f: \ZZ\sp{n} \to \ZZ \cup \{ +\infty \}$
be an integer-valued integrally convex function and $x \in \dom f$.
For each $\ell = 1,2,\ldots,n$,
the projection of $\subgR f(x)$
to the space of $(p_{\ell},p_{\ell+1},\ldots,p_n)$
is described by the last $n-\ell+1$ inequalities 
in \eqref{FMp12n} for 
$p_{\ell},p_{\ell+1},\ldots,p_n$.
\finbox
\end{theorem}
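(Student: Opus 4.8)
The plan is to run the Fourier--Motzkin elimination on the finite inequality system \eqref{subgZR0def} for $P:=\subgR f(x)$, eliminating $p_{1},p_{2},\ldots$ in this order, and to show that integral convexity forces \emph{every} inequality newly created by the elimination to be redundant. Once this is established, after eliminating $p_{1},\ldots,p_{\ell-1}$ the surviving system consists exactly of the original inequalities indexed by $\hat I_{\ell-1}^{0}$, and since Fourier--Motzkin elimination always produces a description of the projection (see \cite{Sch86}), this system describes $[P]_{\ell}$; rewriting each surviving inequality so as to isolate its first nonzero coordinate then yields precisely lines $\ell,\ell+1,\ldots,n$ of \eqref{FMp12n}.

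Concretely, I would argue by induction on $\ell$ that, after eliminating $p_{1},\ldots,p_{\ell-1}$ and discarding the redundant inequalities generated along the way, the current system (as constraints in $(p_{\ell},\ldots,p_{n})$) is $\{\,\langle d,p\rangle\le f(x+d)-f(x)\mid d\in\hat I_{\ell-1}^{0}\,\}$. The base case $\ell=1$ is \eqref{subgZR0def} itself, with $\hat I_{0}^{0}=I$. For the inductive step, eliminating $p_{j}$ by the usual rule keeps the inequalities indexed by $\hat I_{j}^{0}$ and adds, for each $d\in\hat I_{j}^{+}$ and $d'\in\hat I_{j}^{-}$, the combined inequality; since all entries of $A$ lie in $\{-1,0,+1\}$, this combined inequality is simply $\langle e,p\rangle\le f(x+d)+f(x+d')-2f(x)$ with $e:=d+d'$, where $e_{1}=\cdots=e_{j}=0$ and $e\in\{-2,-1,0,1,2\}^{n}$.

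The heart of the argument, and the step I expect to be the main obstacle, is the redundancy lemma: this combined inequality is implied by the inequalities indexed by $\hat I_{j}^{0}$. The proof I have in mind proceeds as follows. Since $\|d-d'\|_{\infty}\ge|d_{j}-d'_{j}|=2$, Theorem~\ref{THfavtarProp33}(b) gives $\tilde f\!\left(x+\frac{e}{2}\right)\le\frac{1}{2}(f(x+d)+f(x+d'))<+\infty$. Write $\tilde f\!\left(x+\frac{e}{2}\right)=\sum_{y\in N(x+e/2)}\lambda_{y}f(y)$ for a minimizing convex combination as in \eqref{fnconvclosureloc2}. One checks that every $y$ in its support has the form $y=x+v$ with $v\in\{-1,0,+1\}^{n}$, $v_{1}=\cdots=v_{j}=0$, and $f(x+v)<+\infty$: the coordinates $k\le j$ are pinned because $e_{k}=0$ makes $x_{k}+\frac{e_{k}}{2}$ an integer, and the surviving support stays in $\dom f$ because $\tilde f\!\left(x+\frac{e}{2}\right)$ is finite. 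Hence each such $v$ indexes an inequality in $\hat I_{j}^{0}$. Multiplying $\langle v,p\rangle\le f(x+v)-f(x)$ by $\lambda_{v}$, summing, and using $\sum_{y}\lambda_{y}v=\frac{e}{2}$ gives $\langle\frac{e}{2},p\rangle\le\tilde f\!\left(x+\frac{e}{2}\right)-f(x)\le\frac{1}{2}(f(x+d)+f(x+d'))-f(x)$ for every $p$ satisfying the $\hat I_{j}^{0}$-inequalities, which is exactly the combined inequality after multiplication by $2$. This closes the induction.

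Finally, a few routine points must be dispatched: when $\hat I_{j}^{+}$ or $\hat I_{j}^{-}$ is empty the elimination simply deletes the opposite family, which matches the conventions $\min_{\emptyset}=+\infty$ and $\max_{\emptyset}=-\infty$ implicit in \eqref{FMp12n}; the minimizing coefficient vector in \eqref{fnconvclosureloc2} exists because, for a fixed support, the minimization is a linear program over a polytope; and $\tilde f$ may be used freely in place of the convex envelope since the two coincide for integrally convex $f$. With the redundancy lemma in hand, the theorem follows immediately, and exactly the same bookkeeping will later be reused to analyze $[P\cap B]_{\ell}$ in the proof of Theorem~\ref{THICsubgrBox}.
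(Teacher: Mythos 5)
Your proposal is correct and takes essentially the same route as the paper and its cited source \cite{MT20subgrIC}: Fourier--Motzkin elimination in which every newly generated inequality (the sum of a ``$+$'' row and a ``$-$'' row) is shown to be redundant, implied by the rows with zero entries in the eliminated coordinates, via the midpoint inequality of Theorem~\ref{THfavtarProp33}(b) and the local convex-combination decomposition \eqref{fnconvclosureloc2}. This is precisely the redundancy mechanism the present paper reuses (in the form of Lemma~\ref{LMcnvcombi}) for the box version, Theorem~\ref{THelimICboxProj}.
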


\subsubsection*{Proof of Theorem~\ref{THsubgrIC} by Theorem~\ref{THelimICProj}}

Since $\subgR f(x)$ is nonempty,
there exists a real vector $p$ satisfying all the inequalities in (\ref{FMp12n}).
It is even true that, 
for any choice of 
$(p_{\ell},p_{\ell+1},\ldots,p_n)$
satisfying the last $n-\ell+1$ inequalities 
of \eqref{FMp12n},
the ($\ell-1$)-st inequality of \eqref{FMp12n}
prescribes a nonempty interval for a possible choice of $p_{\ell-1}$.
As for integrality, the last inequality in (\ref{FMp12n}) shows
that we can choose an integral $p_{n} \in \ZZ$,
since $b_{i} \in \ZZ$ 
for $i \in \hat I_{n}^{-} \cup \hat I_{n}^{+}$.
Then the next-to-last inequality shows that
we can choose an integral $p_{n-1} \in \ZZ$,
since
$a_{kn}p_{n} - b_{k} \in \ZZ$ for $k \in \hat I_{n-1}^{-}$
and
$b_{i} - a_{in}p_{n} \in \ZZ$ for $i \in \hat I_{n-1}^{+}$.
Continuing in this way we can see the existence of an integer vector $p \in \ZZ\sp{n}$
satisfying (\ref{FMp12n}).  
This shows $\subgR f(x) \cap \ZZ\sp{n} \neq \emptyset$,
which completes the proof of Theorem~\ref{THsubgrIC}.

\begin{remark} \rm \label{RMsystemPB}
It follows immediately from Theorem~\ref{THelimICProj} that 
$P \cap B$  ($= \subgR f(x) \cap [\alpha,\beta]_{\RR}$)
is described by the following inequalities:
\begin{align}
\max \left\{
 \max_{k \in \hat I_{1}^{-}} \left\{ \sum_{j=2}^n a_{kj}p_{j} - b_{k} \right\} \!\! ,
\alpha_{1}
 \right\}
  \leq & \ p_{1} \leq  
\min \left\{
 \min_{i \in \hat I_{1}^{+}} \left\{ b_{i} - \sum_{j=2}^n a_{ij}p_{j}  \right\} \!\! ,
\beta_{1}
 \right\} \! ,
\nonumber \\
\max \left\{
 \max_{k \in \hat I_{2}^{-}} \left\{ \sum_{j=3}^n a_{kj}p_{j} - b_{k} \right\} \!\!, 
\alpha_{2}
 \right\}
  \leq & \  p_{2} \leq  
\min \left\{
 \min_{i \in \hat I_{2}^{+}} \left\{ b_{i} - \sum_{j=3}^n a_{ij}p_{j}  \right\} \!\! ,
\beta_{2}
 \right\} \! ,
\nonumber \\
  & \ \vdots 
\label{FMconfunc}
\\
\max \! \left\{ \! 
 \max_{k \in \hat I_{n-1}^{-}} \left\{ a_{kn}p_{n} - b_{k} \right\} \!\! , 
\alpha_{n-1} \! 
 \right\}
  \leq & \  p_{n-1} \leq  
\min  \! \left\{ \! 
 \min_{i \in \hat I_{n-1}^{+}} \left\{ b_{i} -  a_{in}p_{n}  \right\} \!\! ,
\beta_{n-1} \! 
 \right\} \! ,
\nonumber \\
\max \left\{
 \max_{k \in \hat I_{n}^{-}} \left\{ - b_{k} \right\}, 
\alpha_{n}
 \right\}
  \leq & \  p_{n} \leq  
\min \left\{
 \min_{i \in \hat I_{n}^{+}} \left\{ b_{i}  \right\} ,
\beta_{n}
 \right\} ,
\nonumber 
\end{align}
where the index sets $\hat I_{j}^{+}$ and $\hat I_{j}^{-}$  
are defined in \eqref{indIjdef}.
It is certainly true that $p$ belongs to $P \cap B$
if and only if $p$ satisfies all inequalities in \eqref{FMconfunc}.
But it is not true that the projection 
$[P \cap B]_{\ell}$
of $P \cap B$
to the space of $(p_{\ell},p_{\ell+1},\ldots,p_n)$
is described by the last $n-\ell+1$ inequalities.
(This is because
$[P \cap B]_{\ell}$ may differ from $[P]_{\ell} \cap [B]_{\ell}$,
with $[B]_{\ell}$ denoting the projection of $B$.)
In particular, it may not be true that the last inequality
\[
\max \left\{
 \max_{k \in \hat I_{n}^{-}} \left\{ - b_{k} \right\}, 
\alpha_{n}
 \right\}
  \leq  \  p_{n} \leq  
\min \left\{
 \min_{i \in \hat I_{n}^{+}} \left\{ b_{i}  \right\} ,
\beta_{n}
 \right\}
\]
describes the projection of 
$P \cap B$ to the $p_{n}$-axis,
which is demonstrated by a simple example in Example~\ref{EXicbox2dim} below.
An important consequence of this phenomenon is that we
are not allowed to prove Theorem~\ref{THICsubgrBox}
on the basis of \eqref{FMconfunc} 
by extending the proof of Theorem~\ref{THsubgrIC}
based on Theorem~\ref{THelimICProj} (described above).
This is the reason for our (rather long) proof of 
Theorem~\ref{THICsubgrBox} given  
in Sections \ref{SCfmPBgen} and \ref{SCfmPBproof}.
\finbox
\end{remark}

\begin{example} \rm  \label{EXicbox2dim}
Let $f: \ZZ\sp{2} \to \ZZ \cup \{ +\infty \}$
be a function defined on $\{ -1,0, +1 \}\sp{2}$ by
\[
\begin{array}{lll}
 f(-1,1)=2, & f(0,1)=3, & f(1,1)=4, \\
 f(-1,0)=2, & f(0,0)=0, & f(1,0)=4, \\
 f(-1,-1)=3, & f(0,-1)=3, & f(1,-1)=4, \\
\end{array}
\]
which is integer-valued and integrally convex.
The subdifferential $P =  \subgR f(0,0)$ is described by
the following inequalities:
\begin{align}
& 
p_{1} + p_{2} \leq 4, \quad
p_{1} - p_{2} \leq 4, \quad
- p_{1} + p_{2} \leq 2, \quad
- p_{1} - p_{2} \leq 3, 
\nonumber
\\ & 
p_{1}  \leq 4, \quad
-p_{1}  \leq 2, \quad
p_{2}  \leq 3, \quad
-p_{2}  \leq 3,
\label{ineqPicbox2dim}
\end{align}
for which the Fourier--Motzkin elimination results in
\begin{align} 
 \max\{ p_{2}-2, -p_{2}-3, -2 \} \leq p_{1} \leq  \min \{ - p_{2}+4,   p_{2}+4, 4   \} , 
\quad
 -3 \leq p_{2} \leq 3.
\label{PfmelimD2}
\end{align}
Let $B$ be an integral box described by
\begin{equation}
  2 \leq p_{1}, \quad -4 \leq p_{2} \leq 4 .
\label{BineqD2}
\end{equation}
A system of inequalities describing $P \cap B$
is obtained by combining \eqref{PfmelimD2} and \eqref{BineqD2} as 
\begin{align*} 
 \max\{ p_{2}-2, -p_{2}-3 ,-2, 2 \} \leq & \ p_{1} 
  \leq  \min \{ - p_{2}+4,   p_{2}+4 ,4, +\infty  \} , 
\\
  -3 = \max \{ -3, -4 \} \leq & \ p_{2} \leq \min\{ 3, 4 \} = 3,     
\end{align*}
which corresponds to \eqref{FMconfunc}.
However, the last inequality $-3  \leq  p_{2} \leq 3$ here 
does not describe the projection
of $P \cap B$ to the $p_{2}$-axis,
which, actually, is the interval 
$-2  \leq  p_{2} \leq 2$
as Fig.~\ref{FGfmbox} shows.
\finbox
\end{example}

\begin{figure}
\centering
\includegraphics[width=0.4\textwidth,clip]{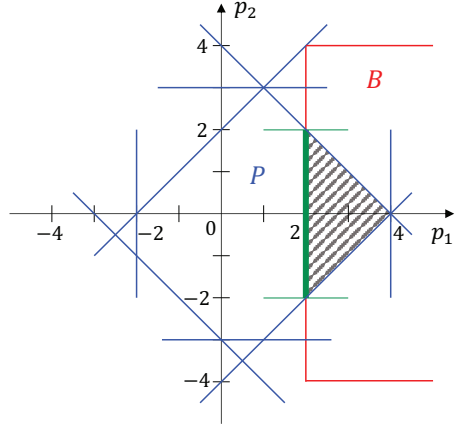}
\caption{$P \cap B$ in Example \ref{EXicbox2dim}} 
\label{FGfmbox}
\end{figure}


\subsection{Fourier--Motzkin elimination for $\subgR f(x) \cap B$}
\label{SCfmPBgen}

Recall notations
$P=\subgR f(x)$ and
\begin{equation} \label{BoxIneq1}
B =  \{ p \in \RR\sp{n} \mid \alpha_{j} \leq p_{j} \leq \beta_{j} \ (j=1,2,\ldots,n) \},
\end{equation}
where the possibility of 
$\alpha_{j} = -\infty$ and/or $\beta_{j}= +\infty$
is allowed.
In this section we are concerned with a system of
inequalities that describe  
the projections $[P \cap B]_{\ell}$ for $\ell = 1,2,\ldots,n$.
The result is stated in Theorem~\ref{THelimICboxProj},
with which Theorem~\ref{THICsubgrBox}
(the main technical result of this paper) is proved.

Recall that $P$ is described by the system of inequalities
$A p \leq b$ in \eqref{ineqApb}.
For each 
$\ell =1,2, \ldots,n$,
we look at those inequalities 
which contain the variable $p_{\ell}$.
Using the sets of indices defined as
\begin{align}
& I_{\ell}^{+} = \{ i \in I \mid a_{i \ell} = +1 \},   
\quad  
 I_{\ell}^{-} = \{ i \in I \mid a_{i \ell} = -1 \},   
\quad
 I_{\ell}^{0} = \{ i \in I \mid a_{i \ell} = 0 \},   
\nonumber
\\ &
 J_{\ell i}^{+} = \{ j \mid 1 \leq j \leq \ell -1, \ a_{ij} = +1 \}
\qquad  (i \in I),
\label{indIJelldef}
\\ &
 J_{\ell i}^{-} = \{ j \mid 1 \leq j \leq \ell -1, \  a_{ij} = -1 \}   
\qquad  (i \in I),
\nonumber
\end{align}
the inequalities of $A p \leq b$ are classified as
\begin{align}
 \sum_{j \in J_{\ell i}\sp{+} } p_{j} -  \sum_{j \in J_{\ell i}\sp{-} } p_{j} + p_{\ell} 
  + \sum_{j=\ell+1}\sp{n} a_{ij}p_{j} & \leq b_{i}
\qquad (i \in I_{\ell}\sp{+}) ,
\label{Axbpl+}
\\
 \sum_{j \in J_{\ell k}\sp{+} } p_{j} -  \sum_{j \in J_{\ell k}\sp{-} } p_{j} - p_{\ell} 
  + \sum_{j=\ell+1}\sp{n} a_{kj}p_{j} & \leq b_{k}
\qquad (k \in I_{\ell}\sp{-}), 
\label{Axbpl-}
\\
 \sum_{j \in J_{\ell h}\sp{+} } p_{j} -  \sum_{j \in J_{\ell h}\sp{-} } p_{j} 
 \phantom{ {} - p_{\ell}}
  + \sum_{j=\ell+1}\sp{n} a_{hj}p_{j} & \leq b_{h}
\qquad (h \in I_{\ell}\sp{0}). 
\label{Axbpl0}
\end{align}
With the use of 
$\alpha_{j} \leq p_{j} \leq \beta_{j}$ 
for $j =1,2, \ldots,\ell-1$,
we can eliminate 
$p_{1},\ldots,p_{\ell-1}$
from \eqref{Axbpl+},  \eqref{Axbpl-}, and \eqref{Axbpl0},
to obtain
\begin{align}
 \sum_{j \in J_{\ell i}\sp{+} } \alpha_{j} -  \sum_{j \in J_{\ell i}\sp{-} } \beta_{j} + p_{\ell}
  + \sum_{j=\ell+1}\sp{n} a_{ij}p_{j}  &\leq b_{i}
\qquad (i \in I_{\ell}\sp{+}) ,
\label{AxbBpell+2}
\\
 \sum_{j \in J_{\ell k}\sp{+} } \alpha_{j} -  \sum_{j \in J_{\ell k}\sp{-} } \beta_{j} 
 - p_{\ell}
  + \sum_{j=\ell+1}\sp{n} a_{kj}p_{j}  &\leq b_{k}
\qquad (k \in I_{\ell}\sp{-}), 
\label{AxbBpell-2}
\\
 \sum_{j \in J_{\ell h}\sp{+} } \alpha_{j} -  \sum_{j \in J_{\ell h}\sp{-} } \beta_{j} 
 \phantom{ {} - p_{\ell}}
  + \sum_{j=\ell+1}\sp{n} a_{hj}p_{j}  &\leq b_{h}
\qquad (h \in I_{\ell}\sp{0}) ,
\label{AxbBpell02}
\end{align}
while we have
\begin{align}
 p_{\ell} & \leq \beta_{\ell} ,
 \label{AxbBpell+1}
\\
 - p_{\ell} & \leq  -\alpha_{\ell} 
 \label{AxbBpell-1}
\end{align}
from \eqref{BoxIneq1}.  
It follows from 
\eqref{AxbBpell+2}, \eqref{AxbBpell-2},
\eqref{AxbBpell+1}, and \eqref{AxbBpell-1} 
that the interval for $p_{\ell}$ is given by
\begin{align}
& \max\{ \alpha_{\ell}, \  \max_{k \in I_{\ell}\sp{-}}
\{ -b_{k} + \sum_{j=\ell+1}\sp{n} a_{kj}p_{j}  
+ \sum_{j \in J_{\ell k}\sp{+} } \alpha_{j} -  \sum_{j \in J_{\ell k}\sp{-} } \beta_{j} 
\} \  \}
\nonumber \\
& \ \leq  p_{\ell}  \leq \ 
\min\{ \beta_{\ell}, \ 
\min_{i \in I_{\ell}\sp{+}}
\{  b_{i} - \sum_{j=\ell+1}\sp{n} a_{ij}p_{j} - \sum_{j \in J_{\ell i}\sp{+} } \alpha_{j}
   +  \sum_{j \in J_{\ell i}\sp{-} } \beta_{j} 
\} \  \} .
\label{AxbBpell}
\end{align}
Note that the inequality \eqref{AxbBpell} is solved for $p_{\ell}$
with the upper and lower bounds
depending on $p_{\ell+1}, \ldots, p_{n}$ 
and independent of $p_{1}, \ldots, p_{\ell-1}$.
It is emphasized that, 
for each $\ell$,
 the single inequality \eqref{AxbBpell}
is equivalent to 
the set of  inequalities consisting of 
\eqref{AxbBpell+2}, \eqref{AxbBpell-2},
\eqref{AxbBpell+1}, and \eqref{AxbBpell-1}.

We denote by $\mathrm{IQ}(\ell)$
the system of inequalities
consisting of
\eqref{AxbBpell+2}, \eqref{AxbBpell-2}, \eqref{AxbBpell02} for $\ell$, 
and 
\eqref{AxbBpell+1} and \eqref{AxbBpell-1}
for $\ell ,\ell+1,\ldots,n$, 
that is,
\begin{equation} \label{IQdef}
\mathrm{IQ}(\ell) = \{ 
\mbox{\eqref{AxbBpell+2}, \eqref{AxbBpell-2}, \eqref{AxbBpell02} for $\ell$}
\} \ \cup \ \{ 
\mbox{\eqref{AxbBpell+1}, \eqref{AxbBpell-1} for $\ell ,\ell+1,\ldots,n$} \}.
\end{equation}
Note that $\mathrm{IQ}(\ell)$
is a system of inequalities in variables 
$p_{\ell},p_{\ell+1},\ldots, p_{n}$,
and is free from $p_{1},\ldots, p_{\ell-1}$.
The number of inequalities in $\mathrm{IQ}(\ell)$
is equal to $|I|+ 2(n-\ell+1)$,
and $\mathrm{IQ}(1)$ is nothing but the combined system 
$A p \leq b, \ \alpha \leq x \leq \beta$.

It follows from the derivation above that, 
for $\ell = 1,2,\ldots,n$,
each inequality in $\mathrm{IQ}(\ell)$ is valid for $[P \cap B]_{\ell}$,
that is, every point in $[P \cap B]_{\ell}$
satisfies $\mathrm{IQ}(\ell)$.
The following theorem states that 
the converse is also true, that is,
$\mathrm{IQ}(\ell)$ gives a precise description
of $[P \cap B]_{\ell}$.
The proof of this theorem is given in Section \ref{SCfmPBproof}.

\begin{theorem}  \label{THelimICboxProj}
Let $f: \ZZ\sp{n} \to \ZZ \cup \{ +\infty \}$
be an integer-valued integrally convex function, $x \in \dom f$,
and $B$ be an integral box.
For each $\ell = 1,2,\ldots,n$,
the projection 
of $\subgR f(x) \cap B$
to the space of $(p_{\ell},p_{\ell+1},\ldots,p_n)$
is described by the system
$\mathrm{IQ}(\ell)$.
\finbox
\end{theorem}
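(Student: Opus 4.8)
The inequalities composing $\mathrm{IQ}(\ell)$ are already known to be valid for the projection $[\subgR f(x)\cap B]_{\ell}$, so the only thing left to prove is the converse: every $(p_{\ell},\dots,p_{n})$ satisfying $\mathrm{IQ}(\ell)$ lies in $[\subgR f(x)\cap B]_{\ell}$. The plan is to establish an \emph{extension lemma}: for each $\ell\in\{2,\dots,n\}$, any $(p_{\ell},\dots,p_{n})$ satisfying $\mathrm{IQ}(\ell)$ can be completed by some value $p_{\ell-1}$ so that $(p_{\ell-1},p_{\ell},\dots,p_{n})$ satisfies $\mathrm{IQ}(\ell-1)$. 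Applying this lemma repeatedly completes a solution of $\mathrm{IQ}(\ell)$ to a solution of $\mathrm{IQ}(1)$, which is precisely the combined system $Ap\le b$, $\alpha\le p\le\beta$ (cf.\ \eqref{ineqApb}, \eqref{BoxIneq1}) defining $\subgR f(x)\cap B$; forgetting the added coordinates exhibits the original $(p_{\ell},\dots,p_{n})$ as a projection of a point of $\subgR f(x)\cap B$. (If $\mathrm{IQ}(\ell)$ is infeasible there is nothing to prove, and that case corresponds to $\subgR f(x)\cap B=\emptyset$.)

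For the extension lemma I would split $\mathrm{IQ}(\ell-1)$ into the inequalities that contain $p_{\ell-1}$ --- namely \eqref{AxbBpell+2} and \eqref{AxbBpell-2} for index $\ell-1$, together with $\alpha_{\ell-1}\le p_{\ell-1}\le\beta_{\ell-1}$, which confine $p_{\ell-1}$ to an interval $[\max\{\alpha_{\ell-1},\max_{k\in I_{\ell-1}^{-}}L_{k}\},\ \min\{\beta_{\ell-1},\min_{i\in I_{\ell-1}^{+}}U_{i}\}]$ depending only on $p_{\ell},\dots,p_{n}$ (where $L_{k}$ and $U_{i}$ are the lower and upper bounds on $p_{\ell-1}$ furnished by \eqref{AxbBpell-2} and \eqref{AxbBpell+2} for rows $k$ and $i$) --- and those that do not contain $p_{\ell-1}$, namely \eqref{AxbBpell02} for index $\ell-1$ (rows $h\in I_{\ell-1}^{0}$, so $a_{h,\ell-1}=0$) and the box bounds $\alpha_{m}\le p_{m}\le\beta_{m}$ for $m\ge\ell$. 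Using $a_{h,\ell-1}=0$, so that the index sets of \eqref{indIJelldef} for indices $\ell-1$ and $\ell$ coincide on these rows, a direct bookkeeping identifies each inequality of the second group with one already present in $\mathrm{IQ}(\ell)$, hence satisfied by $(p_{\ell},\dots,p_{n})$; the same bookkeeping rewrites the one-sided conditions $\alpha_{\ell-1}\le U_{i}$ $(i\in I_{\ell-1}^{+})$ and $L_{k}\le\beta_{\ell-1}$ $(k\in I_{\ell-1}^{-})$ as inequalities in $\mathrm{IQ}(\ell)$. Hence a valid choice of $p_{\ell-1}$ exists --- and the lemma follows --- provided the \emph{cross inequalities} $L_{k}\le U_{i}$ hold for all $i\in I_{\ell-1}^{+}$ and $k\in I_{\ell-1}^{-}$; this is the only substantial point, and it is where integral convexity enters.

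To prove $L_{k}\le U_{i}$ I would use that $a_{i,\ell-1}=+1\ne-1=a_{k,\ell-1}$ forces $\|a_{i}-a_{k}\|_{\infty}\ge2$, while $x+a_{i},x+a_{k}\in\dom f$, so Theorem~\ref{THfavtarProp33}(b) yields
\[
 \tilde f\bigl(x+\tfrac{1}{2}(a_{i}+a_{k})\bigr) \le \tfrac{1}{2}\bigl(f(x+a_{i})+f(x+a_{k})\bigr) < +\infty .
\]
Writing the left-hand side by \eqref{fnconvclosureloc2} as $\sum_{e}\lambda_{e}f(x+e)$ with $\lambda_{e}\ge0$, $\sum_{e}\lambda_{e}=1$, $\sum_{e}\lambda_{e}e=\tfrac{1}{2}(a_{i}+a_{k})$, the sum running over $x+e$ in the integer neighborhood $N\bigl(x+\tfrac{1}{2}(a_{i}+a_{k})\bigr)$, each such $e$ lies in $\{-1,0,+1\}^{n}$ (since each $\tfrac{1}{2}(a_{ij}+a_{kj})\in[-1,1]$) with $f(x+e)<+\infty$, hence indexes a row of $A$ with $b_{e}=f(x+e)-f(x)$; therefore $b_{i}+b_{k}\ge2\sum_{e}\lambda_{e}b_{e}$. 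Bounding each $b_{e}$ below by the inequality of $\mathrm{IQ}(\ell)$ attached to row $e$ --- which, whatever the sign of $e_{\ell}$, takes the form $b_{e}\ge\sum_{j=1}^{\ell-1}g_{j}(e_{j})+\sum_{j=\ell}^{n}e_{j}p_{j}$ with $g_{j}(1)=\alpha_{j}$, $g_{j}(-1)=-\beta_{j}$, $g_{j}(0)=0$ --- and summing with weights $\lambda_{e}$, the terms $\sum_{j=\ell}^{n}e_{j}p_{j}$ assemble (via $\sum_{e}\lambda_{e}e_{j}=\tfrac{1}{2}(a_{ij}+a_{kj})$) into exactly the $p$-dependent side of $L_{k}\le U_{i}$, so that it remains to verify the constant inequality
\[
 \sum_{j=1}^{\ell-1}\Bigl(g_{j}(a_{ij})+g_{j}(a_{kj})-2\sum_{e}\lambda_{e}g_{j}(e_{j})\Bigr) \le \alpha_{\ell-1}-\beta_{\ell-1}.
\]
A coordinatewise evaluation shows the $j$-th summand vanishes unless $\{a_{ij},a_{kj}\}=\{-1,+1\}$, in which case $e_{j}$ is forced to $0$ and the summand equals $\alpha_{j}-\beta_{j}\le0$; since $j=\ell-1$ is such an index with summand $\alpha_{\ell-1}-\beta_{\ell-1}$, the left-hand side equals the sum of $\alpha_{j}-\beta_{j}$ over those $j\le\ell-1$ with $\{a_{ij},a_{kj}\}=\{-1,1\}$, which is at most $\alpha_{\ell-1}-\beta_{\ell-1}$, as required. (If some $\alpha_{j}=-\infty$ or $\beta_{j}=+\infty$ occurs for an index $j<\ell-1$ with $a_{ij},a_{kj}$ both nonzero, then $U_{i}=+\infty$ or $L_{k}=-\infty$ and $L_{k}\le U_{i}$ is vacuous, so one may assume all the quantities finite.)

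The step I expect to be the obstacle is this cross inequality $L_{k}\le U_{i}$: a naive elimination of $p_{\ell-1}$ from $\mathrm{IQ}(\ell-1)$ would create these as genuinely new constraints --- exactly the pitfall highlighted in Remark~\ref{RMsystemPB} and Example~\ref{EXicbox2dim} --- and the substance of the proof is that integral convexity, through the midpoint inequality of Theorem~\ref{THfavtarProp33}(b) and the local convex decomposition \eqref{fnconvclosureloc2}, forces them to be consequences of $\mathrm{IQ}(\ell)$, with the delicate part being to match simultaneously the part linear in $p$ and the box-constant part. Everything else is routine index-set bookkeeping.
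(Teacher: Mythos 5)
Your proposal is correct and is essentially the paper's own argument: your extension lemma is the dual reading of the paper's induction via Fourier--Motzkin elimination (the only inequalities not literally present in $\mathrm{IQ}(\ell)$ being the cross inequalities $L_{k}\le U_{i}$, which correspond to \eqref{AxbBpellfix+-2}), and your treatment of them via the midpoint inequality, the convex-combination decomposition over $N\bigl(x+\tfrac{1}{2}(a_{i}+a_{k})\bigr)$, and the leftover nonpositive terms $\alpha_{j}-\beta_{j}$ coincides with Lemmas~\ref{LMcnvcombi} and \ref{LMfromIQ(ell+1)} together with \eqref{alfbetaneg}. Only a cosmetic caution: your coordinatewise evaluation needs the sign-consistency $e_{j}\in\{0,+1\}$ (resp.\ $\{-1,0\}$) when $\tfrac{1}{2}(a_{ij}+a_{kj})>0$ (resp.\ $<0$), which does follow from $x+e\in N\bigl(x+\tfrac{1}{2}(a_{i}+a_{k})\bigr)$ exactly as in the paper's \eqref{AxbBpellval4}--\eqref{AxbBpellval5}.
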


\subsubsection*{Proof of Theorem~\ref{THICsubgrBox} by Theorem~\ref{THelimICboxProj}}

Since $\subgR f(x) \cap B$ is nonempty by assumption,
there exists a real vector $p$ satisfying 
the inequalities in 
\eqref{AxbBpell} for $\ell =1,2, \ldots,n$.
It is even true that, 
for any choice of 
$(p_{\ell},p_{\ell+1},\ldots,p_n)$
satisfying \eqref{AxbBpell} for $\ell, \ell+1, \ldots,n$,
the inequality of \eqref{AxbBpell}
for $\ell-1$
prescribes a nonempty interval for a possible choice of $p_{\ell-1}$.
Note that 
\eqref{AxbBpell02} for $\ell$
is an inequality to be counted as 
\eqref{AxbBpell+2} or \eqref{AxbBpell-2}
for some $\ell' \geq \ell +1$ or else
a trivial inequality between two constants.

As for integrality, the inequality of \eqref{AxbBpell} for $\ell=n$,
i.e.,
\begin{align*}
\max\{ \alpha_{n}, \  \max_{k \in I_{n}\sp{-}}
\{ -b_{k} + \sum_{j \in J_{nk}\sp{+} } \alpha_{j} -  \sum_{j \in J_{nk}\sp{-} } \beta_{j} 
\} \  \}
\leq
  p_{n}  \leq
\min\{ \beta_{n}, \ 
\min_{i \in I_{n}\sp{+}}
\{  b_{i} - \sum_{j \in J_{ni}\sp{+} } \alpha_{j}
   +  \sum_{j \in J_{ni}\sp{-} } \beta_{j} 
\} \  \},
\end{align*}
shows that we can choose an integral $p_{n} \in \ZZ$,
since 
$b_{i} \in \ZZ$ for all $i$, and
$\alpha_{j} \in \ZZ \cup \{ -\infty \}$ and
$\beta_{j} \in \ZZ \cup \{ +\infty \}$
for all $j$.
Then  the inequality of \eqref{AxbBpell} for $\ell=n-1$
shows that
we can choose an integral $p_{n-1} \in \ZZ$,
since its upper and lower bounds are both integers.
Continuing in this way we can see the existence of an integer vector $p \in \ZZ\sp{n}$
satisfying \eqref{AxbBpell} for all $\ell$.  
This shows $\subgR f(x) \cap B \cap \ZZ\sp{n} \neq \emptyset$,
which completes the proof of Theorem~\ref{THICsubgrBox}.

\begin{example} \rm  \label{EXicbox2dimFM}
We illustrate $\mathrm{IQ}(\ell)$ for the simple example in Example \ref{EXicbox2dim}.
By \eqref{BineqD2}
we have 
$(\alpha_{1}, \beta_{1}) = (2,+\infty)$ and
$(\alpha_{2}, \beta_{2}) = (-4,4)$.
 The inequalities in \eqref{ineqPicbox2dim}
for $P =  \subgR f(0,0)$ 
can be expressed as 
$A p \leq b$
with
\[
A = \left[
\begin{array}{rr}
   1 &    1  \\
   1 &   -1 \\
   -1 &   1 \\
   -1 &   -1 \\
   1 &    0 \\
   -1 &    0 \\
   0 &    1  \\
   0 &   -1  \\
\end{array} 
\right],
\qquad
b =
 \left[
\begin{array}{c}
   4   \\
   4  \\
   2  \\
   3  \\
   4  \\
   2  \\
   3  \\
   3  \\
\end{array} 
\right],
\]
where we assume that the row set of $A$ is indexed by $I=\{ r_{1}, r_{2},\ldots, r_{8} \}$
and the column set by $\{ 1,2 \}$.
First, for $\ell = 1$, we have
$I_{1}^{+} = \{ r_{1}, r_{2}, r_{5} \}$,
$I_{1}^{-} = \{ r_{3}, r_{4}, r_{6} \}$, and
$I_{1}^{0} = \{ r_{7}, r_{8} \}$.
Accordingly, \eqref{AxbBpell+2} and \eqref{AxbBpell-2} for $\ell = 1$ are given by
\begin{align*}
\mbox{\eqref{AxbBpell+2}$_{\ell = 1}$:} \qquad &
 p_{1} + p_{2} \leq 4, \quad
 p_{1} -p_{2} \leq 4, \quad
  p_{1} \leq 4, 
\\
\mbox{\eqref{AxbBpell-2}$_{\ell = 1}$:} \qquad &
 - p_{1} + p_{2}  \leq 2, \quad
 -p_{1} - p_{2} \leq 3, \quad
  -p_{1} \leq 2. 
\end{align*}
Then \eqref{AxbBpell} for $\ell = 1$ is given by
\begin{align*}
\mbox{\eqref{AxbBpell}$_{\ell = 1}$:} \qquad &
 \max\{ \alpha_{1}, -2 + p_{2}, -3-p_{2},-2 \} 
\leq p_{1}  \leq  
 \min \{ \beta_{1}, 4 - p_{2},   4+ p_{2}, 4   \} .
\end{align*}
The system
$\mathrm{IQ}(1)$ consists of all the inequalities in 
\eqref{ineqPicbox2dim} and \eqref{BineqD2}.

Next, for $\ell = 2$, we have
$I_{2}^{+} = \{ r_{1}, r_{3}, r_{7} \}$,
$I_{2}^{-} = \{ r_{2}, r_{4}, r_{8} \}$, and
$I_{2}^{0} = \{ r_{5}, r_{6} \}$.
Accordingly, 
\eqref{AxbBpell+2} 
and \eqref{AxbBpell-2} for $\ell = 2$ are given by
\begin{align*}
\mbox{\eqref{AxbBpell+2}$_{\ell = 2}$:} \qquad &
 \alpha_{1} + p_{2} \leq 4, \quad
 -\beta_{1} + p_{2} \leq 2, \quad
  p_{2} \leq 3, 
\\
\mbox{\eqref{AxbBpell-2}$_{\ell = 2}$:} \qquad &
 \alpha_{1} - p_{2} \leq 4, \quad
 -\beta_{1} - p_{2} \leq 3, \quad
  -p_{2} \leq 3 ,
\end{align*}
and  \eqref{AxbBpell} for $\ell = 2$ is given by
\begin{align*}
\mbox{\eqref{AxbBpell}$_{\ell = 2}$:} \qquad &
 \max\{ \alpha_{2}, -4 + \alpha_{1}, -3 -\beta_{1}, -3 \} 
\leq p_{2}  \leq  
\min \{ \beta_{2}, 4 - \alpha_{1},   2 + \beta_{1}, 3   \} .
\end{align*}
The system 
\eqref{AxbBpell02}
for $\ell = 2$ consists of two trivial inequalities:
\begin{align*}
\mbox{\eqref{AxbBpell02}$_{\ell = 2}$:} \qquad &
0 \leq 2, \quad 0 \leq 2+\infty.
\end{align*}
Then the system
$\mathrm{IQ}(2)$ consists of 
\eqref{AxbBpell+2}$_{\ell = 2}$,
\eqref{AxbBpell-2}$_{\ell = 2}$,
\eqref{AxbBpell02}$_{\ell = 2}$,
and
$-4 \leq p_{2}  \leq 4$
in \eqref{BineqD2};
note that the other inequality
$2 \leq p_{1}$ in \eqref{BineqD2} is not a member of 
$\mathrm{IQ}(2)$.

Using 
$(\alpha_{1}, \beta_{1}) = (2,+\infty)$
and
$(\alpha_{2}, \beta_{2}) = (-4,4)$, we obtain
\begin{align*}
\mbox{\eqref{AxbBpell}$_{\ell = 1}$:} \qquad &
 \max\{ 2, -2 + p_{2}, -3-p_{2},-2 \} 
\leq p_{1}  \leq  
 \min \{ 4 - p_{2},   4+ p_{2} ,4  \} ,
\\
\mbox{\eqref{AxbBpell}$_{\ell = 2}$:} \qquad &
 -2 \leq p_{2}  \leq  2,
\end{align*}
in agreement with Fig.~\ref{FGfmbox}.
It is noted, however, that 
$\mathrm{IQ}(2)$ contains redundant inequalities.
\finbox
\end{example}

\subsection{Proof of Theorem~\ref{THelimICboxProj}}
\label{SCfmPBproof}

In this section we give a proof of Theorem~\ref{THelimICboxProj},
which states that the projection $[P \cap B]_{\ell}$
is described by 
$\mathrm{IQ}(\ell)$ for $\ell=1,2,\ldots,n$,
where $P=\subgR f(x)$, $B=[\alpha,\beta]_{\RR}$,
and 
\begin{equation*} 
\mathrm{IQ}(\ell) = \{ 
\mbox{\eqref{AxbBpell+2}, \eqref{AxbBpell-2}, \eqref{AxbBpell02} for $\ell$}
\} \ \cup \ \{ 
\mbox{\eqref{AxbBpell+1}, \eqref{AxbBpell-1} for $\ell ,\ell+1,\ldots,n$} \}
\end{equation*}
defined in \eqref{IQdef} with
\begin{align*}
\mbox{\eqref{AxbBpell+2}:} \qquad &
 \sum_{j \in J_{\ell i}\sp{+} } \alpha_{j} -  \sum_{j \in J_{\ell i}\sp{-} } \beta_{j} 
  + \sum_{j=\ell+1}\sp{n} a_{ij}p_{j}
  + p_{\ell}
\leq b_{i}
\qquad (i \in I_{\ell}\sp{+}) ,
\\
\mbox{\eqref{AxbBpell-2}:} \qquad &
 \sum_{j \in J_{\ell k}\sp{+} } \alpha_{j} -  \sum_{j \in J_{\ell k}\sp{-} } \beta_{j} 
  + \sum_{j=\ell+1}\sp{n} a_{kj}p_{j} 
  - p_{\ell} \leq b_{k}
\qquad (k \in I_{\ell}\sp{-}) ,
\\
\mbox{\eqref{AxbBpell02}:} \qquad &
 \sum_{j \in J_{\ell h}\sp{+} } \alpha_{j} -  \sum_{j \in J_{\ell h}\sp{-} } \beta_{j} 
  + \sum_{j=\ell+1}\sp{n} a_{hj}p_{j}  
 \phantom{ {} - p_{\ell}}
  \leq b_{h}
\qquad (h \in I_{\ell}\sp{0}) ,
\\
\mbox{\eqref{AxbBpell+1}:} \qquad & \phantom{-}
 p_{\ell} \leq \beta_{\ell}, 
\\
\mbox{\eqref{AxbBpell-1}:} \qquad &
 {-p_{\ell}} \leq -\alpha_{\ell} .
\end{align*}
See \eqref{indIJelldef} for the definitions of
$I_{\ell}^{+}$, $I_{\ell}^{-}$, $I_{\ell}^{0}$,
$J_{\ell i}^{+}$, and $J_{\ell i}^{-}$.
We already know that 
each member of $\mathrm{IQ}(\ell)$ is a valid inequality for $[P \cap B]_{\ell}$,
and we need to prove that 
those inequalities are, in fact, sufficient for the description of $[P \cap B]_{\ell}$.

The subdifferential $P=\subgR f(x)$ is described by 
$A p \leq b$
in \eqref{ineqApb} and the integral box
$B$ by $\alpha \leq x \leq \beta$.
Hence $P \cap B$ is described by the
combined system 
$A p \leq b, \ \alpha \leq x \leq \beta$.
By applying the Fourier--Motzkin elimination procedure \cite{Sch86} to 
this combined system 
$A p \leq b, \ \alpha \leq x \leq \beta$,
we can obtain an inequality system to describe the projections
$[P \cap B]_{\ell}$ for $\ell = 1,2,\ldots,n$.
In the Fourier--Motzkin elimination,
the variable $p_{1}$ is eliminated first, and then $p_{2}, p_{3}, \ldots$,
to finally obtain an inequality in $p_{n}$ only.

It is worth while to reiterate here the technical subtlety
explained in Remark \ref{RMsystemPB}
concerning the relation between $[P \cap B]_{\ell}$ and $[P]_{\ell} \cap [B]_{\ell}$,
where $[P]_{\ell}$ and $[B]_{\ell}$
denote the projections of $P$ and $B$, respectively, 
to the space of $(p_{\ell},p_{\ell+1},\ldots,p_n)$.
By Theorem~\ref{THelimICProj}, the projection $[P]_{\ell}$ is
described by \eqref{FMp12n} for $p_{\ell},p_{\ell+1},\ldots,p_n$,
while $[B]_{\ell}$ is described obviously by
$\alpha_{j} \leq p_{j} \leq \beta_{j}$ $(j=\ell, \ell+1,\ldots,n)$.
If we had $[P \cap B]_{\ell} = [P]_{\ell} \cap [B]_{\ell}$,
we could describe $[P \cap B]_{\ell}$ by \eqref{FMconfunc}.
But this is not the case, as demonstrated  by Example \ref{EXicbox2dim}.

In the following we prove, by induction on $\ell=1,2,\ldots,n$,
that the projection $[P \cap B]_{\ell}$ 
to the space of $(p_{\ell},p_{\ell+1},\ldots,p_n)$
is described by the system $\mathrm{IQ}(\ell)$.
This statement is obviously true for $\ell = 1$,
since $\mathrm{IQ}(1)$ 
coincides with combined system 
$A p \leq b, \ \alpha \leq x \leq \beta$.
Fix $\ell$ with $1 \leq \ell \leq n-1$,
and assume that
$[P \cap B]_{\ell}$ is described by $\mathrm{IQ}(\ell)$.

In the following we shall prove that 
$[P \cap B]_{\ell+1}$ is described by $\mathrm{IQ}(\ell+1)$
by applying the Fourier--Motzkin elimination procedure 
to $\mathrm{IQ}(\ell)$ 
to eliminate the variable $p_{\ell}$.
The procedure consists of the following four types of elimination operations.

\begin{itemize}
\item
The addition of \eqref{AxbBpell+1} and \eqref{AxbBpell-1}
results in an obvious inequality
$\alpha_{\ell} \leq \beta_{\ell}$.

\item
The addition of \eqref{AxbBpell-2} for $k \in I_{\ell}\sp{-}$
and \eqref{AxbBpell+1}
results in 
\begin{align*}
\sum_{j \in J_{\ell k}\sp{+} } \alpha_{j} 
  -  \sum_{j \in J_{\ell k}\sp{-} } \beta_{j} - \beta_{\ell}
  + \sum_{j=\ell+1}\sp{n} a_{kj}p_{j}  \leq b_{k}.
\end{align*}
According to the value of 
$a_{k, \ell + 1} \in \{ +1, -1, 0 \}$,
this inequality is contained in
\eqref{AxbBpell+2},
\eqref{AxbBpell-2}, or
\eqref{AxbBpell02} for $\ell +1$,
and hence is contained in 
$\mathrm{IQ}(\ell+1)$.

\item
The addition of \eqref{AxbBpell+2} for $i \in I_{\ell}\sp{+}$
and
\eqref{AxbBpell-1}
results in 
\[
 \sum_{j \in J_{\ell i}\sp{+} } \alpha_{j} + \alpha_{\ell} -  \sum_{j \in J_{\ell i}\sp{-} } \beta_{j} 
  + \sum_{j=\ell+1}\sp{n} a_{ij}p_{j}  \leq b_{i} ,
\]
which is, similarly, a member of $\mathrm{IQ}(\ell+1)$.

\item
The addition of \eqref{AxbBpell+2} for $i \in I_{\ell}\sp{+}$
and
\eqref{AxbBpell-2} for $k \in I_{\ell}\sp{-}$
results in 
\begin{align}
\sum_{j \in J_{\ell i}\sp{+} } \alpha_{j} 
+ \sum_{j \in J_{\ell k}\sp{+} } \alpha_{j} 
- \sum_{j \in J_{\ell i}\sp{-} } \beta_{j}
- \sum_{j \in J_{\ell k}\sp{-} } \beta_{j}
+ \sum_{j=\ell+1}\sp{n} (a_{ij}+a_{kj})p_{j}
\leq b_{i} + b_{k} .
\label{AxbBpellfix+-2}
\end{align}
We prove later that this is 
a redundant inequality, implied by $\mathrm{IQ}(\ell+1)$.
\end{itemize}

Thus, all the inequalities generated
by the Fourier--Motzkin elimination procedure applied 
to eliminate the variable $p_{\ell}$
from $\mathrm{IQ}(\ell)$
are, in fact, implied by $\mathrm{IQ}(\ell+1)$.
On the other hand, 
$[P \cap B]_{\ell}$ is described by $\mathrm{IQ}(\ell)$
by the induction hypothesis,
and each inequality of $\mathrm{IQ}(\ell+1)$ 
is valid for $[P \cap B]_{\ell+1}$.
It then follows that
$[P \cap B]_{\ell+1}$ is described by $\mathrm{IQ}(\ell+1)$,
as desired.

\medskip

The rest of this section is devoted to the proof 
that the inequality \eqref{AxbBpellfix+-2}
is implied by $\mathrm{IQ}(\ell+1)$.
We may assume that 
$\alpha_{j}$ and $\beta_{j}$ appearing in \eqref{AxbBpellfix+-2}
are all finite-valued,
since otherwise this inequality is trivially true.
It turns out to be convenient to introduce
a factor of $1/2$ to \eqref{AxbBpellfix+-2}, to obtain
\begin{align}
\frac{1}{2}\left(
\sum_{j \in J_{\ell i}\sp{+} } \alpha_{j} 
+ \sum_{j \in J_{\ell k}\sp{+} } \alpha_{j} 
- \sum_{j \in J_{\ell i}\sp{-} } \beta_{j}
- \sum_{j \in J_{\ell k}\sp{-} } \beta_{j}
+ \sum_{j=\ell+1}\sp{n} (a_{ij}+a_{kj})p_{j}
\right)  
\leq \frac{1}{2}(b_{i} + b_{k}) .
\label{AxbBpellfix+-}
\end{align}
In the following we aim at proving  \eqref{AxbBpellfix+-} in place of \eqref{AxbBpellfix+-2}.

Consider an inequality
\begin{align}
\frac{1}{2}\left(
 \sum_{j \in J_{\ell i}\sp{+} } p_{j} 
 + \sum_{j \in J_{\ell k}\sp{+} } p_{j} 
- \sum_{j \in J_{\ell i}\sp{-} } p_{j}
- \sum_{j \in J_{\ell k}\sp{-} } p_{j}
 + \sum_{j=\ell+1}\sp{n} (a_{ij}+a_{kj})p_{j}
\right)  
\leq \frac{1}{2}(b_{i} + b_{k}) ,
\label{AxbBpellval+-}
\end{align}
which is obtained from \eqref{AxbBpellfix+-}
by replacing $\alpha_{j}$ and $\beta_{j}$ to $p_{j}$.
In this expression cancellations of the form of $+p_{j} - p_{j}$ occur
for $j \in (J_{\ell i}\sp{+} \cap J_{\ell k}\sp{-}) 
 \cup (J_{\ell k}\sp{+} \cap J_{\ell i}\sp{-})$.
On omitting these cancelling terms we obtain
\begin{align}
\frac{1}{2}\left(  
\sum_{j \in J_{\ell i}\sp{+}\setminus J_{\ell k}\sp{-}} p_{j} 
+ \sum_{j \in J_{\ell k}\sp{+} \setminus J_{\ell i}\sp{-}} p_{j} 
- \sum_{j \in J_{\ell i}\sp{-} \setminus J_{\ell k}\sp{+}} p_{j}
- \sum_{j \in J_{\ell k}\sp{-} \setminus J_{\ell i}\sp{+}} p_{j}
+ \sum_{j=\ell+1}\sp{n} (a_{ij}+a_{kj})p_{j} \right) 
\leq \frac{1}{2}(b_{i} + b_{k}) .
\label{AxbBpellval+-Can}
\end{align}
With the use of  coefficients 
$c= (c_{1},c_{2},\ldots,c_{n})$
defined by
\begin{align}
c_{j} = 
\begin{cases} 
+1  &  ( j \in J_{\ell i}\sp{+} \cap J_{\ell k}\sp{+}  ),
\\ 
-1  &  ( j \in J_{\ell i}\sp{-} \cap J_{\ell k}\sp{-}  ),
\\ 
+1/2  &  ( j \in J_{\ell i}\sp{+}\setminus (J_{\ell k}\sp{+} \cup  J_{\ell k}\sp{-}) 
          \ \ \mbox{or} \ \  
          j \in J_{\ell k}\sp{+} \setminus (J_{\ell i}\sp{+} \cup J_{\ell i}\sp{-}) ),
\\ 
-1/2  &  ( j \in J_{\ell i}\sp{-}\setminus (J_{\ell k}\sp{+} \cup  J_{\ell k}\sp{-}) 
          \ \ \mbox{or} \ \  
          j \in J_{\ell k}\sp{-} \setminus (J_{\ell i}\sp{+} \cup J_{\ell i}\sp{-}) ),
\\
(a_{ij}+a_{kj})/2 & (\ell + 1 \leq j \leq n),
\\
 0 & (\mbox{\rm otherwise}),
\end{cases}
\label{cjdef}
\end{align}
we can express \eqref{AxbBpellval+-Can} more compactly as
\begin{equation}
 c p  \leq \frac{1}{2}(b_{i} + b_{k}),
\label{AxbBpellval+-2} 
\end{equation}
where
$c p = \sum_{j=1}\sp{n} c_{j} p_{j}$.
By the definition \eqref{cjdef} we have
\begin{align}
& c_{j} \in \{-1, -\frac{1}{2}, 0, +\frac{1}{2}, +1\} \qquad (j=1 , \ldots , n),
\\ & 
 c_{j} = 0 \qquad  (j \in \{\ell\}  
 \cup (J_{\ell i}\sp{+} \cap J_{\ell k}\sp{-}) 
 \cup (J_{\ell k}\sp{+} \cap J_{\ell i}\sp{-}) ) .
\label{AxbBpellval+-3}
\end{align}

We observe that the inequality \eqref{AxbBpellval+-}
is, in fact, derived from  $A p \leq b$
 by adding 
\eqref{Axbpl+}
for $i \in I_{\ell}\sp{+}$
and 
\eqref{Axbpl-}
for $k \in I_{\ell}\sp{-}$
(and dividing by two).
By the definition of $A p \leq b$ in  \eqref{ineqApb},
we have
\[ 
 b_{i}=f(x + a_{i}) - f(x), \qquad
 b_{k}=f(x + a_{k}) - f(x),
\] 
as well as 
$c =  (a_{i} + a_{k})/2$,
where $a_{i}, a_{k} \in \{ -1, 0, +1\}\sp{n}$.
Recall that $I$ denotes the row set of the matrix $A$
and $a_{h}=(a_{h1},a_{h2},\ldots,a_{hn})$ is the $h$th row vector of $A$ for $h \in I$.

The following lemma depends heavily on the integral convexity of $f$.

\begin{lemma}  \label{LMcnvcombi}
There exist a subset $I' \ (\subseteq I)$
and positive weights
$\lambda_{h}$ indexed by $h \in I'$
(for convex combination)
such that
\begin{align}
 & 
\sum_{h \in I'} \lambda_{h}  = 1,\qquad \lambda_{h} > 0\quad(h \in I'), 
\label{AxbBpellval0} 
\\
& \sum_{h \in I'} \lambda_{h} a_{h} = c,
\label{AxbBpellval1} 
\\
& \sum_{h \in I'} \lambda_{h} b_{h} \leq \frac{1}{2}(b_{i} +b_{k}), 
\label{AxbBpellval2} 
\\
& c_{j} = 0  \ \Longrightarrow  \  a_{hj} = 0 \quad(h \in I'),
\label{AxbBpellval3} 
\\
& c_{j} > 0  \ \Longrightarrow \ a_{hj} \in \{0,+1\} \quad(h \in I'), 
\label{AxbBpellval4} 
\\
& c_{j} < 0 \ \Longrightarrow \  a_{hj} \in \{-1,0\} \quad(h \in I'). 
\label{AxbBpellval5}
\end{align}
\end{lemma}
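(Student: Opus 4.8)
The plan is to derive Lemma~\ref{LMcnvcombi} directly from the characterization of integral convexity in Theorem~\ref{THfavtarProp33}(b), applied to the two integer points $u := x + a_{i}$ and $v := x + a_{k}$, where $i \in I_{\ell}^{+}$ and $k \in I_{\ell}^{-}$. The crucial observation is that the $\ell$-th coordinates of $a_{i}$ and $a_{k}$ are $+1$ and $-1$, so $\| u - v \|_{\infty} = \| a_{i} - a_{k} \|_{\infty} \ge 2$, which is exactly the hypothesis needed to invoke \eqref{intcnvconddist2}. Since $f(u) = b_{i} + f(x)$ and $f(v) = b_{k} + f(x)$ are finite (a row of $A$ is present only for $d$ with $f(x+d) < +\infty$), and since $(u+v)/2 = x + (a_{i} + a_{k})/2 = x + c$ with $c$ the vector of \eqref{cjdef}, Theorem~\ref{THfavtarProp33}(b) yields
\[
\tilde f(x+c) \;\le\; \tfrac12\bigl( f(u) + f(v) \bigr) \;=\; \tfrac12(b_{i} + b_{k}) + f(x) \;<\; +\infty .
\]

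First I would unwind the definition \eqref{fnconvclosureloc2} of the local convex extension $\tilde f$ at $x+c$: there exist coefficients $(\mu_{w} \mid w \in N(x+c)) \in \Lambda(x+c)$ with $\sum_{w} \mu_{w} w = x+c$ and $\sum_{w} \mu_{w} f(w) = \tilde f(x+c)$. As this value is finite, every $w$ with $\mu_{w} > 0$ has $f(w) < +\infty$. Next I would analyse the integer neighborhood $N(x+c)$ via \eqref{intneighbordeffloorceil}: since $x \in \ZZ^{n}$ and each $c_{j}$ lies in $\{-1, -\tfrac12, 0, +\tfrac12, +1\}$, every $w \in N(x+c)$ satisfies $w - x \in \{-1,0,+1\}^{n}$, with $w_{j} = x_{j}$ whenever $c_{j} = 0$, $w_{j} - x_{j} \in \{0,+1\}$ whenever $c_{j} > 0$, and $w_{j} - x_{j} \in \{-1,0\}$ whenever $c_{j} < 0$. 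Together with $f(w) < +\infty$ for the active $w$, this forces each such $w$ to equal $x + a_{h}$ for a unique row index $h \in I$.

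Then I would put $I' := \{ h \in I \mid \mu_{x+a_{h}} > 0 \}$ and $\lambda_{h} := \mu_{x+a_{h}}$ for $h \in I'$, and read off the conclusions: \eqref{AxbBpellval0} is $\sum_{w} \mu_{w} = 1$ with positivity by construction; \eqref{AxbBpellval1} is $\sum_{w} \mu_{w} (w - x) = c$; \eqref{AxbBpellval2} is $\sum_{w} \mu_{w} ( f(w) - f(x) ) = \tilde f(x+c) - f(x) \le \tfrac12(b_{i} + b_{k})$; and \eqref{AxbBpellval3}--\eqref{AxbBpellval5} are exactly the sign restrictions on $w - x = a_{h}$ obtained above from the shape of $N(x+c)$.

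The argument is short and I do not anticipate a real obstacle. The one point requiring care is the bookkeeping linking the coefficient vector $c$ of \eqref{cjdef} to the midpoint $x+c$: one must check that the pattern of integer versus half-integer entries of $c$ is precisely what makes $N(x+c)$ consist of the points $x + a_{h}$ whose coordinatewise signs match \eqref{AxbBpellval3}--\eqref{AxbBpellval5}, and in particular that $c_{\ell} = 0$ so that the variable $p_{\ell}$ being eliminated really disappears. Everything else is a direct transcription of the defining convex combination of $\tilde f(x+c)$.
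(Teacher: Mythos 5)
Your proposal is correct and follows essentially the same route as the paper: the paper also applies integral convexity (via the midpoint inequality for $\tilde f$) at $x+\tfrac12(a_{i}+a_{k})=x+c$, extracts the convex-combination representation of $\tilde f(x+c)$ over $N(x+c)$, identifies the active points $y^{(h)}$ with rows $x+a_{h}$ of $A$, and reads off \eqref{AxbBpellval0}--\eqref{AxbBpellval5} from $a_{h}\in N(c)$. Your additional checks (that $\|a_{i}-a_{k}\|_{\infty}\ge 2$ since $a_{i\ell}=+1$, $a_{k\ell}=-1$, and that finiteness of $f$ at the active points guarantees they index rows of $A$) are details the paper leaves implicit, and they are verified correctly.
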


\begin{proof}
We have
$b_{i}=f(x + a_{i}) - f(x)$, \ 
$b_{k}=f(x + a_{k}) - f(x)$, 
and
$c =  (a_{i} + a_{k})/2$.
By the integral convexity of $f$,
there exist 
$y\sp{(1)}, y\sp{(2)}, \ldots, y\sp{(m)} \in N(x+(a_{i}+a_{k})/2)$ 
such that
\begin{equation}\label{prfcnvcombi}
 \sum_{h=1}\sp{m} \lambda_{h} y\sp{(h)}
  = x + \frac{1}{2} (   a_{i} + a_{k} ), 
\qquad
 \sum_{h=1}\sp{m} \lambda_{h} f(y\sp{(h)})
\leq
 \frac{1}{2} (   f(x+a_{i}) +f(x+a_{k}) ) ,
\end{equation}
where $\lambda_h > 0$ for  $h =1,2,\ldots,m$ and $\sum_{h=1}\sp{m} \lambda_h = 1$.
Let $a_{h}$ be the row vector of $A$ that is equal to 
$y\sp{(h)} - x \in \{ -1, 0, +1\}\sp{n}$,
and $I'$ be the subset of $I$ corresponding to 
$y\sp{(h)} - x$ for  $h =1,2,\ldots,m$.
Then \eqref{prfcnvcombi} shows
\eqref{AxbBpellval1} and \eqref{AxbBpellval2}.
The last three conditions 
\eqref{AxbBpellval3}, \eqref{AxbBpellval4}, and \eqref{AxbBpellval5}
hold, since $a_{h}$ belongs to $N(c)$ for all $h \in I'$.
\end{proof}

Lemma~\ref{LMcnvcombi} enables us to show that the inequality 
\eqref{AxbBpellval+-Can}
(or \eqref{AxbBpellval+-2})
can be derived from the inequalities
corresponding to $I'$:
\begin{align}
 a_{h}p \leq b_{h} \qquad (h \in I').
\label{AxbBpIdash}
\end{align}
Indeed, by \eqref{AxbBpellval0}, \eqref{AxbBpellval1}, and \eqref{AxbBpellval2},
we obtain
\[
 c p  = 
 \left( \sum_{h \in I'} \lambda_{h} a_{h} \right) p 
 =   \sum_{h \in I'} \lambda_{h} ( a_{h}  p ) 
 \leq \sum_{h \in I'} \lambda_{h} b_{h}
 \leq  \frac{1}{2}(b_{i} +b_{k}).
\]
In the following, we use a variant of this argument
to show that another (related) inequality 
\begin{align}
\frac{1}{2}\left(  
\sum_{j \in J_{\ell i}\sp{+}\setminus J_{\ell k}\sp{-}} \alpha_{j} 
+ \sum_{j \in J_{\ell k}\sp{+} \setminus J_{\ell i}\sp{-}} \alpha_{j} 
- \sum_{j \in J_{\ell i}\sp{-} \setminus J_{\ell k}\sp{+}} \beta_{j}
- \sum_{j \in J_{\ell k}\sp{-} \setminus J_{\ell i}\sp{+}} \beta_{j}
+ \sum_{j=\ell+1}\sp{n} (a_{ij}+a_{kj})p_{j} \right) \leq \frac{1}{2}(b_{i} + b_{k})
\label{AxbBcl2}
\end{align}
can be derived from $\mathrm{IQ}(\ell+1)$.
Note that \eqref{AxbBcl2} resembles \eqref{AxbBpellval+-Can}.
Indeed, \eqref{AxbBcl2} is
obtained from \eqref{AxbBpellval+-Can}
by replacing $+p_{j}$ to $+\alpha_{j}$ and $-p_{j}$ to $-\beta_{j}$.

\begin{lemma}  \label{LMfromIQ(ell+1)}
The inequality \eqref{AxbBcl2} is implied by $\mathrm{IQ}(\ell+1)$.
\end{lemma}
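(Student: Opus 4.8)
The plan is to realize \eqref{AxbBcl2} explicitly as a nonnegative (indeed convex) combination of inequalities that already belong to $\mathrm{IQ}(\ell+1)$, with the weights supplied by Lemma~\ref{LMcnvcombi}. Let $I'\subseteq I$ and $\lambda_h>0$ $(h\in I')$ be as in that lemma, so that $\sum_{h\in I'}\lambda_h=1$, $\sum_{h\in I'}\lambda_h a_h=c$, $\sum_{h\in I'}\lambda_h b_h\le\frac{1}{2}(b_i+b_k)$, together with the sign conditions \eqref{AxbBpellval3}, \eqref{AxbBpellval4}, \eqref{AxbBpellval5}. Since $a_{i\ell}=+1$ and $a_{k\ell}=-1$ we have $c_\ell=0$, hence $a_{h\ell}=0$ for every $h\in I'$ by \eqref{AxbBpellval3}; consequently $J^{+}_{\ell+1,h}=J^{+}_{\ell h}$ and $J^{-}_{\ell+1,h}=J^{-}_{\ell h}$ for $h\in I'$. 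Then, according to whether $a_{h,\ell+1}$ equals $+1$, $-1$, or $0$, the inequality
\[
 \sum_{j\in J^{+}_{\ell h}}\alpha_j-\sum_{j\in J^{-}_{\ell h}}\beta_j+\sum_{j=\ell+1}^{n}a_{hj}p_j\ \le\ b_h
\]
is exactly \eqref{AxbBpell+2}, \eqref{AxbBpell-2}, or \eqref{AxbBpell02} written at level $\ell+1$ for the index $h$; in particular it is a member of $\mathrm{IQ}(\ell+1)$, involves only $p_{\ell+1},\dots,p_n$, and we denote it by $(\ast_h)$.

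Next I would form the combination $\sum_{h\in I'}\lambda_h(\ast_h)$ and check that it coincides with \eqref{AxbBcl2}. Its right-hand side is $\sum_{h\in I'}\lambda_h b_h\le\frac{1}{2}(b_i+b_k)$, matching \eqref{AxbBcl2}. On the left, the coefficient of $p_j$ for $j\ge\ell+1$ is $\sum_{h\in I'}\lambda_h a_{hj}=c_j=\frac{1}{2}(a_{ij}+a_{kj})$, again as in \eqref{AxbBcl2}. For $j\le\ell-1$ the aggregated coefficient of $\alpha_j$ is $\sum_{h\in I',\,a_{hj}=+1}\lambda_h$ and that of $\beta_j$ is $-\sum_{h\in I',\,a_{hj}=-1}\lambda_h$, and I would evaluate these according to the five possible values of $c_j$: if $c_j=\pm1$, the condition \eqref{AxbBpellval4} or \eqref{AxbBpellval5} together with $\sum_{h\in I'}\lambda_h=1$ forces $a_{hj}=\pm1$ for all $h\in I'$, so the coefficient is $\pm1$; if $c_j=\pm\frac{1}{2}$, those conditions force $a_{hj}\in\{0,+1\}$ (resp.\ $\{-1,0\}$), so the relevant weight is exactly $|c_j|=\frac{1}{2}$ and no term of the opposite sign appears; if $c_j=0$, then $a_{hj}=0$ for all $h\in I'$ by \eqref{AxbBpellval3}, so $j$ contributes nothing. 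Comparing with the definition \eqref{cjdef} of $c$ and with the set-differences $J^{+}_{\ell i}\setminus J^{-}_{\ell k}$, $J^{+}_{\ell k}\setminus J^{-}_{\ell i}$, $J^{-}_{\ell i}\setminus J^{+}_{\ell k}$, $J^{-}_{\ell k}\setminus J^{+}_{\ell i}$ in \eqref{AxbBcl2} shows the two left-hand sides agree term by term; in particular the indices $j\in(J^{+}_{\ell i}\cap J^{-}_{\ell k})\cup(J^{+}_{\ell k}\cap J^{-}_{\ell i})$ have $c_j=0$ and so drop out on both sides, just as in the passage from \eqref{AxbBpellval+-} to \eqref{AxbBpellval+-Can}. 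As noted before the lemma, we may assume the $\alpha_j$ and $\beta_j$ involved are finite, since otherwise \eqref{AxbBcl2} holds trivially. Thus \eqref{AxbBcl2} equals $\sum_{h\in I'}\lambda_h(\ast_h)$, a nonnegative combination of members of $\mathrm{IQ}(\ell+1)$, and hence is implied by $\mathrm{IQ}(\ell+1)$.

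The step I expect to require the most care is this coefficient bookkeeping: checking that the four set-differences occurring in \eqref{AxbBcl2} are precisely the supports cut out by the five values of $c_j$ via \eqref{cjdef}, and that the sign conditions of Lemma~\ref{LMcnvcombi} determine the aggregated weights exactly, not merely up to an inequality. Everything else --- the membership of each $(\ast_h)$ in $\mathrm{IQ}(\ell+1)$, and the identity $J^{\pm}_{\ell+1,h}=J^{\pm}_{\ell h}$ for $h\in I'$ that makes these inequalities usable --- is immediate from the definitions and the observation $c_\ell=0$.
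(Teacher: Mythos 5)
Your proposal is correct and follows essentially the same route as the paper: it takes the inequalities indexed by $I'$ from Lemma~\ref{LMcnvcombi} (which, since $a_{h\ell}=0$ forces $J^{\pm}_{\ell+1,h}=J^{\pm}_{\ell h}$, are members of $\mathrm{IQ}(\ell+1)$ of type \eqref{AxbBpell+2}, \eqref{AxbBpell-2}, or \eqref{AxbBpell02}), forms their convex combination with the weights $\lambda_h$, and uses the sign conditions \eqref{AxbBpellval3}--\eqref{AxbBpellval5} together with $c_j=\sum_{h}\lambda_h a_{hj}$ and \eqref{AxbBpellval2} to match the left-hand side of \eqref{AxbBcl2} and bound the right-hand side by $\frac{1}{2}(b_i+b_k)$. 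The only difference is cosmetic: you compute the aggregated coefficients of the combination and compare with \eqref{AxbBcl2}, while the paper rewrites the left-hand side of \eqref{AxbBcl2} in terms of $c_j$ and substitutes; the bookkeeping is identical.
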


\begin{proof}
Consider inequalities 
\begin{align}
 \sum_{j \in J_{\ell+1, h}^{+}} \alpha_{j} 
- \sum_{j \in J_{\ell+1, h}^{-}} \beta_{j} 
+ \sum_{j=\ell+1} a_{hj}p_{j} \leq b_{h}
\qquad (h \in I') .
\label{AxbBpellfix-JJ}
\end{align}
These inequalities 
are contained in $\mathrm{IQ}(\ell+1)$,
because 
each inequality in \eqref{AxbBpellfix-JJ}
is of the form of 
\eqref{AxbBpell+2},
\eqref{AxbBpell-2} or \eqref{AxbBpell02}
for $\ell+1$,
depending on $a_{h,\ell+1} \in \{ +1, -1, 0 \}$.

Since $a_{h \ell} = 0$ 
by \eqref{AxbBpellval+-3} and \eqref{AxbBpellval3},
we have
\begin{equation*}
 J_{\ell+1, h}^{+} =  \{ j \mid j<\ell,a_{hj}=+1 \},
\qquad 
 J_{\ell+1, h}^{-} =  \{ j \mid j<\ell,a_{hj}=-1 \} 
\end{equation*}
for all $h \in I'$.
Therefore, \eqref{AxbBpellfix-JJ} may be rewritten as 
\begin{align}
 \sum_{j<\ell,a_{hj}=+1} \alpha_{j} 
- \sum_{j<\ell,a_{hj}=-1} \beta_{j} 
+ \sum_{j=\ell+1} a_{hj}p_{j} \leq b_{h}
\qquad (h \in I')  .
\label{AxbBpellfix}
\end{align}
It should be clear that these inequalities are contained in $\mathrm{IQ}(\ell+1)$.

To show that the inequality \eqref{AxbBcl2}
is derived from \eqref{AxbBpellfix},
we form a convex combination of \eqref{AxbBpellfix}
using the weight $(\lambda_{h} \mid h \in I')$
in Lemma~\ref{LMcnvcombi}.
By the definition of $c_{j}$ in \eqref{cjdef}, we see that
\begin{align*}
\mbox{LHS of \eqref{AxbBcl2}}
 = 
 \sum_{j<\ell,c_{j}>0} c_{j} \alpha_{j} 
+ \sum_{j<\ell,c_{j}< 0} c_{j} \beta_{j} 
+ \sum_{j=\ell+1}\sp{n} c_{j} p_{j} .
\end{align*}
On substituting
\[
 c_{j}= \sum_{h \in I'} \lambda_{h} a_{hj} 
\qquad (j=1,\ldots,n)
\]
given in \eqref{AxbBpellval1},
we further obtain
\begin{align}
& 
\mbox{LHS of \eqref{AxbBcl2}}
\nonumber
\\ &
 = 
 \sum_{j<\ell,c_{j}>0}
 \left(  \sum_{h \in I'} \lambda_{h} a_{hj} \right)
\alpha_{j} 
+ \sum_{j<\ell,c_{j}< 0} 
 \left(  \sum_{h \in I'} \lambda_{h} a_{hj} \right)
 \beta_{j} 
+ \sum_{j=\ell+1}\sp{n} 
 \left(  \sum_{h \in I'} \lambda_{h} a_{hj} \right)
 p_{j} 
\nonumber
\\ &
 = \sum_{h \in I'} \lambda_{h} \left(
 \sum_{j<\ell,c_{j}>0} a_{hj} \alpha_{j} 
+ \sum_{j<\ell,c_{j}< 0} a_{hj} \beta_{j} 
+ \sum_{j=\ell+1}\sp{n} a_{hj} p_{j} 
\right)
\nonumber
\\ &
=
 \sum_{h \in I'} \lambda_{h} \left( 
 \sum_{j<\ell,a_{hj}=+1} \alpha_{j} 
 - \sum_{j<\ell, a_{hj}=-1} \beta_{j} 
 + \sum_{j=\ell+1} a_{hj}p_{j}
 \right) ,
\label{AxbBcl2L}
\end{align}
where
\eqref{AxbBpellval3}, \eqref{AxbBpellval4}, and \eqref{AxbBpellval5}
are used for the last equality.
On the other hand, the convex combination of \eqref{AxbBpellfix} shows
\begin{align}
 \sum_{h \in I'} \lambda_{h} \left( 
 \sum_{j<\ell,a_{hj}=+1} \alpha_{j} 
 - \sum_{j<\ell, a_{hj}=-1} \beta_{j} 
 + \sum_{j=\ell+1} a_{hj}p_{j}
 \right)
 \leq \sum_{h \in I'} \lambda_{h} b_{h} \leq \frac{1}{2}(b_{i} + b_{k}),
\label{AxbBcl2R}
\end{align}
where the second inequality is due to \eqref{AxbBpellval2}.
By combining  \eqref{AxbBcl2L} and \eqref{AxbBcl2R} we obtain \eqref{AxbBcl2}. 
\end{proof}

\medskip

Finally, we observe that,
while \eqref{AxbBcl2}
is implied by
$\mathrm{IQ}(\ell+1)$ by  Lemma \ref{LMfromIQ(ell+1)},
the inequality 
\eqref{AxbBpellfix+-}
in question
is obtained 
as the sum of 
\eqref{AxbBcl2}
and a trivial inequality
\begin{align}
 \frac{1}{2}
 \sum_{j \in   J_{\ell i}\sp{+} \cap J_{\ell k}\sp{-}} (\alpha_{j}-\beta_{j})
+  \frac{1}{2}
 \sum_{j \in  J_{\ell k}\sp{+} \cap J_{\ell i}\sp{-}} (\alpha_{j}-\beta_{j})
\leq 0 .
\label{alfbetaneg}
\end{align}
Therefore, 
\eqref{AxbBpellfix+-} is implied by, or redundant to, $\mathrm{IQ}(\ell+1)$.
This completes the proof of Theorem~\ref{THelimICboxProj}.

\bigskip

\noindent {\bf Acknowledgement}. 
The authors thank Satoru Fujishige for pointing out 
the connection to box convolution of bisubmodular functions 
expounded in Section \ref{SCbisubbox}, and
Akiyoshi Shioura for a helpful comment,
which led to Corollary~\ref{COsubgrICp}.
This work was supported by 
JSPS/MEXT KAKENHI JP20K11697, JP16K00023, and JP21H04979.


\end{document}